\newif\if@restonecol
\numberwithin{equation}{section}
 \newcommand{\blue}[1]{{\color{blue}#1}}
\theoremstyle{thmstyleone}%
\newtheorem{lemma}{Lemma}
\newtheorem{assumption}{Assumption}
\newtheorem{corollary}{Corollary}
\newtheorem{theorem}{Theorem}%  meant for continuous numbers
\newtheorem{proposition}[theorem]{Proposition}% 
\theoremstyle{thmstyletwo}%
\newtheorem{remark}{Remark}%
\theoremstyle{thmstylethree}%
\newtheorem{definition}{Definition}%
\begin{document}
%\linenumber
\title[Riemannian Smoothing Gradient Type Algorithms]{Riemannian Smoothing  Gradient Type Algorithms for Nonsmooth Optimization Problem on Compact Riemannian Submanifold Embedded in Euclidean Space}

%%=============================================================%%
%% Prefix	-> \pfx{Dr}
%% GivenName	-> \fnm{Joergen W.}
%% Particle	-> \spfx{van der} -> surname prefix
%% FamilyName	-> \sur{Ploeg}
%% Suffix	-> \sfx{IV}
%% NatureName	-> \tanm{Poet Laureate} -> Title after name
%% Degrees	-> \dgr{MSc, PhD}
%% \author*[1,2]{\pfx{Dr} \fnm{Joergen W.} \spfx{van der} \sur{Ploeg} \sfx{IV} \tanm{Poet Laureate} 
%%                 \dgr{MSc, PhD}}\email{iauthor@gmail.com}
%%=============================================================%%

\author[1]{\fnm{Zheng} \sur{Peng}}%\email{pzheng@xtu.edu.cn}

\author[1]{\fnm{Weihe} \sur{Wu}}%\email{1009248350@qq.com}
%\equalcont{These authors contributed equally to this work.}

\author[2]{\fnm{Jiang} \sur{Hu}}%\email{hujiangopt@gmail.com}
%\equalcont{These authors contributed equally to this work.}

\author*[3]{\fnm{Kangkang} \sur{Deng}}\email{freedeng1208@gmail.com}

\affil[1]{\orgdiv{School of Mathematics and Computational Science}, \orgname{Xiangtan University}, \orgaddress{ \city{Xiangtan}, \postcode{411105}, \country{China}}}

\affil[2]{\orgdiv{Massachusetts General Hospital and Harvard Medical School}, \orgname{Harvard University}, \orgaddress{\city{Boston}, \postcode{02114}, \country{United States}}}

\affil*[3]{\orgdiv{Department of Mathematics}, \orgname{National University of Defense Technology}, \orgaddress{ \city{Changsha}, \postcode{420000},  \country{China}}}

%%==================================%%
%% sample for unstructured abstract %%
%%==================================%%

\abstract{In this paper, we introduce the notion of generalized $\epsilon$-stationarity for a class of nonconvex and nonsmooth composite minimization problems on compact Riemannian submanifold embedded in  Euclidean space. 
To find a generalized $\epsilon$-stationarity point,   we develop a family of Riemannian gradient-type methods based on the Moreau envelope technique with a decreasing sequence of smoothing parameters,  namely Riemannian smoothing gradient and Riemannian smoothing stochastic gradient methods.  We prove that the Riemannian smoothing gradient method has the iteration complexity of $\mathcal{O}(\epsilon^{-3})$ for driving a generalized $\epsilon$-stationary point. To our knowledge, this is the best-known iteration complexity result for the nonconvex and nonsmooth composite problem on manifolds.  For the  Riemannian smoothing stochastic gradient method, one can achieve the iteration complexity of $\mathcal{O}(\epsilon^{-5})$ for driving a generalized $\epsilon$-stationary point. Numerical experiments are conducted to validate the superiority of our algorithms.}

\keywords{Moreau envelope, smoothing technique, iteration complexity, Riemannian gradient methods,  nonconvex and nonsmooth optimization}
% \noindent\textbf{Mathematics Subject Classification:  } 	
	
%%\pacs[JEL Classification]{D8, H51}

\pacs[MSC Classification]{90C26, 90C30, 90C15, 90C06, 90C90 }

\maketitle

\section{Introduction}
We consider a general nonsmooth optimization problem on manifolds:
\begin{equation}\label{prob:p1}
\min_{x\in\mcM}  \quad  \varphi(x):= f(x)+h(\mcA x),
\end{equation}
where $\mcM$ is a compact Riemannian manifold embedded in $\mbR^n$, $f:\mcM\rightarrow\mbR$ is a continuously-differentiable nonconvex function; $h:\mbR^m \rightarrow\mbR$ is a nonsmooth weakly-convex function with a readily available proximal operator, and $\mcA:\mbR^n \rightarrow \mbR^m$ is a linear operator.
 The model \eqref{prob:p1} includes a wide range of fundamental problems such as sparse principal component analysis (SPCA) \cite{jolliffe2003modified}, range-based independent component analysis  \cite{selvan2013spherical,selvan2015range}, distributed optimization \cite{deng2023decentralized} and robust low-rank matrix completion \cite{cambier2016robust,hosseini2017riemannian}, see 
Absil et al. \cite{absil2017collection} for more examples. In those applications, $h$ is often a sparse regularization function, and problem \eqref{prob:p1} becomes a sparse optimization problem on Riemannian manifolds. %In addition, when $h$ is an indicator function onto a set $\Omega$, i.e., $g(x) = \delta_{\Omega}(x)$, problem \eqref{prob:p1} represents a optimization problem on Riemannian manifolds with constraints \cite{metel2019simple}:
% \begin{equation}
%   \begin{split}
%      \min_{x\in\mathcal{M}}   f(x), ~~\mbox{s.t. } &  x\in\Omega.
%   \end{split}
% \end{equation}
% It has many appealing applications, such as non-negative principal component analysis \cite{montanari2015non} and minimum balanced cut for graph bisection \cite{lang2006fixing} etc.

 In many applications of model \eqref{prob:p1},    $\mathcal{M}$ is a submanifold embedded in Euclidean space,  for example,  Stiefel manifold. There has a connection in function properties over Riemannian manifolds and Euclidean space, including smoothness and convexity. So it allows us to extend some classical optimization methods in Euclidean space to  Riemannian manifolds. Indeed, many classical optimization methods for  smooth optimization problems  in Euclidean space have been successfully generalized to the problems on Riemannian manifolds, e.g.,  gradient-based methods \cite{Luenberger1972The,grocf}, Newton-based methods \cite{Absil2007Trust,HuaAbsGal2018,hu2018adaptive}, splitting based methods \cite{Kovna2015,deng2022manifold} and stochastic gradient methods \cite{bonnabel2013stochastic,sato2019riemannian,shah2021stochastic}.  The reader is referred to  \cite{boumal2023intromanifolds,sato2021riemannian,hu2020brief} for a comprehensive review.  However, these methods are not adapted to problem \eqref{prob:p1} since there exists a nonsmooth term in the objective. In this paper, we propose a homotopy smoothing method to solve problem \eqref{prob:p1} in some general settings. Firstly, we utilize the Moreau envelope technique to obtain a smooth approximation of nonsmooth function $h$ and employ Riemannian optimization methods to the resulted smoothing problem. Then, we apply a homotopy technique during the iterations to obtain a good enough approximation by gradually decreasing the homotopy parameter. By combining Riemannian optimization methods, smoothing techniques,  and several additional assumptions, the proposed method can achieve better convergence results (computational complexity) than the existing subgradient methods. %In addition, when $f$ is a smooth function in the finite sum form $\sum_{i=1}^{n} f_i(X)$, we give a stochastic extension of our method and analysis its convergence result.

\subsection{Related works}
% \subsection{Nonsmooth manifold optimization}
% \paragraph{splitting-type method}

\subsubsection{ Lipschitz continuous function minimization} Riemannian gradient sampling methods are motivated by gradient sampling algorithms for nonconvex nonsmooth optimization in   Euclidean space. As introduced in \cite{hosseini2018line,hosseini2017riemannian}, given the current iterate $x_{k}$, a typical Riemannian gradient sampling algorithm first samples some points $\left\{x_{k}^{j}\right\}_{j=1}^{J}$ in the neighborhood of $x_{k}$ at which the objective function $f$ is differentiable, where the number of sampled points $J$ usually needs to be larger than the dimension of the manifold $\mathcal{M}$. Then,  it solves the following quadratic optimization problem for  obtaining  a descent direction: 
$$\boldsymbol{\xi}_{k}=-\underset{\boldsymbol{g} \in \operatorname{conv}(\mathcal{W})}{\operatorname{argmin}}\|\boldsymbol{g}\|^{2},$$
where $\operatorname{conv}(\mathcal{W})$ denotes the convex hull of $\mathcal{W}:=\left\{ g^1,\ldots, g^J \right\}$, $g^i \in T_{x^k}\mathcal{M}$ denotes a transported vector of $\operatorname{grad} f(x_{k}^{i}) $  by a vector transport, and $\operatorname{grad} f$ is the Riemannian gradient of $f$ on $\mathcal{M}$. The update can then be performed via classical retractions on $\mathcal{M}$ using the descent direction $\boldsymbol{\xi}_{k}$. This type of algorithm can potentially be utilized to solve a large class of Riemannian nonsmooth optimization problems. However, they are only known to converge asymptotically without any rate guarantee \cite{hosseini2018line,hosseini2017riemannian}. Furthermore, when addressing problem \eqref{prob:p1} in high dimensions using the Riemannian gradient sampling algorithm, a considerable number of Riemannian gradients must be sampled at each iteration. \blue{The} cost of solving the subproblem relies on the efficient computation of the vector transport operator.  The proximal point-type method iteratively computes the proximal mapping of the objective function over the Riemannian manifold \cite{de2016new,ferreira2002proximal}. The main issue with these methods is that each subproblem is as difficult as the original problem, which renders them not practical.

\subsubsection{Smoothing techniques for nonsmooth problem}

%Another related research is  the smoothing technique.  
In Euclidean space,    a variable smoothing algorithm  based on the Moreau envelope with a decreasing sequence of smoothing parameters was developed in \cite{bohm2021variable}, and it is proven to have a complexity of $\mathcal{O}(\epsilon^{-3})$ to achieve an $\epsilon$-approximate solution. We will extend this   work to  a class of nonsmooth problems on manifolds and develop an analog variable smoothing technique  on  compact Riemannian manifolds.  Such variable smoothing technique allows us to  resort to the analysis techniques for nonconvex minimization in the Euclidean space, and give some new convergence results for
the proposed  Riemannian smoothing gradient-type methods. Lin et al. \cite{lin2014smoothing} proposed a smoothing stochastic gradient method by incorporating the smoothing technique into the stochastic gradient descent algorithm, and Xu et al. \cite{xu2016homotopy} proposed a novel homotopy smoothing algorithm for solving a family of non-smooth problems which achieves the iteration complexity of $\mathcal{O}(\epsilon^{-1})$. For a class of nonsmooth convex problems with linear constraint, Wei et al. \cite{wei2018solving} proposed a primal-dual homotopy smoothing algorithm   achieving  lower complexity than $\mathcal{O}(\epsilon^{-1})$.
For  other works on the smoothing method, the readers are referred to  \cite{bot2019variable,metel2019simple,chambolle2011first,ouyang2012stochastic,tran2017adaptive}. 

The smoothing technique on   the Riemannian optimization problem is still scarcely explored. The authors in  \cite{liu2019simple} and  \cite{qu2019nonconvex} presented a smooth approximation for minimizing a nonsmooth function on   Riemannian manifolds, they did not utilize the Moreau envelope technique to obtain a smooth approximation, and the smoothing parameter is fixed. Cambier and Absil \cite{cambier2016robust} provided a homotopy smoothing strategy to solve robust low-rank matrix completion problems,  but they did not give the convergence analysis.  Zhang et al. \cite{zhang2021riemannian} proposed a Riemannian smoothing steepest descent method to
minimize a nonconvex and non-Lipschitz function on manifolds,  and gave an asymptotic convergence result.  More recently, Beck and ROSSET \cite{beck2023} proposed a dynamic smoothing gradient descent on manifold and obtained a convergence rate of $\mathcal{O}(1/k)$. It is worth noting that our study and this work were conducted concurrently (their article was published electronically on July 24, 2023), and their proposed algorithm does not include a randomized version.  
%To the best of our knowledge, this is the first work that employs the homotopy smoothing technique to nonsmooth problems on manifold and establishes non-asymptotic convergence rate analysis.

\subsubsection{Computational complexity on    Riemannian nonsmooth problem}
For the computational complexity of algorithms for solving nonsmooth and nonconvex minimization problems on Riemannian manifolds, Li et al. \cite{li2021weakly} presented a family of Riemannian subgradient-type methods and show that it has an iteration complexity of $\mathcal{O}(\epsilon^{-4})$ for driving a natural stationarity measure below $\epsilon$. However, if the function has more structure, it is possible to design algorithms that achieve better complexity. Indeed, if the
proximal operator of the nonsmooth function can be calculated analytically, i.e., $\mathcal{A} = \mathcal{I}$ in \eqref{prob:p1},  Chen et al. \cite{chen2020proximal}  proposed a retraction-based proximal gradient method in Stiefel manifold and achieves an iteration complexity of $\mathcal{O}(\epsilon^{-2})$ for obtaining an $\epsilon$-stationary solution. The Riemannian smoothing gradient method proposed in our paper achieves an iteration complexity of $\mathcal{O}(\epsilon^{-3})$, which interpolates between   $\mathcal{O}(\epsilon^{-2})$   and  $\mathcal{O}(\epsilon^{-4})$. It should be emphasized that each iteration in \cite{chen2020proximal} involves solving a subproblem that lacks an explicit solution, and they utilize the semismooth Newton method to solve it. Our algorithms require only one computation of the gradient for the smooth term $f$ and the computation of a proximity operator for the non-smooth term $h$ in each iteration. The authors in \cite{seguin2022continuation} 
  construct a suitable homotopy between the original manifold optimization problem and a problem that admits an easy solution, they develop and analyze a path-following numerical continuation
algorithm on manifolds for solving the resulting parameter-dependent problem. 
 Very recently, based on the smoothing technique, a Riemannian alternating direction method of multipliers (RADMM) is proposed in \cite{li2022riemannian}  with complexity result of $\mathcal{O}(\epsilon^{-4})$ for driving a Karush–Kuhn–Tucker (KKT) residual based stationarity. 

\subsection{Main contributions}
In this paper, we propose two Riemannian smoothing gradient-based methods for solving problem \eqref{prob:p1}: the Riemannian smoothing gradient method and the Riemannian smoothing stochastic gradient method. To analyze the convergence behavior of these methods, we introduce the concept of a generalized $\epsilon$-stationary point for a class of nonsmooth problems on manifolds. This concept serves as a weak version of the classical $\epsilon$-stationary point.
By employing the Moreau envelope technique, we derive a smoothing subproblem of the original problem \eqref{prob:p1} in Euclidean space. We establish the smoothness of this subproblem on the Riemannian compact submanifold. We demonstrate that the iterates generated by the aforementioned Riemannian smoothing gradient-type methods drive a generalized $\epsilon$-stationary point within computational complexities of $\mathcal{O}(\epsilon^{-3})$ and $\mathcal{O}(\epsilon^{-5})$, respectively. The corresponding proofs are presented in Theorems \ref{thm:full:epoch} and \ref{thm:sto:epoch}. Notably, these complexity guarantees align with the results in \cite{bohm2021variable}, which proposes a range of algorithms for solving composite weakly convex minimization problems in the Euclidean space. Numerical experiments in section \ref{sec:num} validate the superiority of the proposed algorithms.

\subsection{Notation}Throughout, Euclidean space, denoted by  $\mathbb{R}^n$, equipped with an inner product $\left<\cdot,\cdot\right>$ and inducing norm $\|x\| = \sqrt{\left<x,x\right>}$. Given a matrix $A$, we use $\|A\|_F$ to denote the Frobenius norm, $\|A\|_1:=\sum_{ij}\vert A_{ij}\vert$ to denote the $\ell_1$ norm. For a vector $x$, we use $\|x\|_2$ and $\|x\|_1$ to denote Euclidean norm and $\ell_1$ norm, respectively. The indicator function of a set $\mathcal{C}$, denoted by $\delta_{\mathcal{C}}$, is set to be zero on $\mathcal{C}$ and $+\infty$ otherwise. The distance from $x$ to $\mathcal{C}$ is denoted by $\mathrm{dist}(x,\mathcal{C}): = \min_{y\in\mathcal{C}}\|x-y\|$.
 For a differentiable function $f$ on $\mathcal{M}$, let grad $f(x)$  be its Riemannian gradient. If $f$ can be extended to the ambient Euclidean space, we denote its Euclidean gradient  by $\nabla f(x)$.

		\section{Preliminaries}\label{sec-preli}

In this section, we introduce some relevant concepts for Riemannian optimization, which can be regarded as some generalizations from Euclidean space to Riemannian manifolds. We refer the reader to \cite{AbsMahSep2008} for more details.
 We will also introduce subdifferential and the Moreau envelope technique.
\subsection{Riemannian optimization}
%A $d$-dimensional smooth manifold $\mathcal{M}$ is a Hausdorff and second-countable topological space, where each point has a neighborhood that is homomorphic to the $d$-dimensional Euclidean space. 
An $n$-dimensional smooth manifold $\mathcal{M}$ is an $n$-dimensional topological manifold equipped with a smooth structure, where each point has a neighborhood that is diffeomorphic to the $n$-dimensional Euclidean space.  For all $x\in\mathcal{M}$, there exists a chart $(U,\psi)$ such that $U$ is an open set and $\psi$ is a diffeomorphism between $U$ and an open set $\psi(U)$ in Euclidean space.   A tangent vector $\eta_x$ to $\mathcal{M}$ at $x$ is defined as tangents of parametrized curves $\gamma$ on $\mathcal{M}$ such that $\gamma(0) = x$ and
\[\nonumber
  \eta_x u : = \dot{\gamma}(0)u = \left.\frac{d(u(\gamma(t)))}{dt} \right\vert_{t=0} , \forall u\in  \oldwp_x\mathcal{M},
\]
where $\oldwp_x\mathcal{M}$ is the set of all real-valued functions $f$ defined in a neighborhood of $x$ in $ \mathcal{M}$. Then, the tangent space $T_x\mathcal{M}$ of a manifold $\mathcal{M}$ at $x$ is defined as the set of all tangent vectors at point $x$.  The manifold $\mathcal{M}$ is called a Riemannian manifold if it is equipped with an inner product on the tangent space $T_x\mathcal{M}$ at each $x\in\mathcal{M}$. In case that $\mathcal{M}$ is a Riemannian submanifold of Euclidean space $\mathcal{E}$, the inner product is defined as Euclidean inner product: $\left<\eta_x,\xi_x\right> = \mathrm{tr}(\eta_x^\top \xi_x)$. The Riemannian gradient $\grad  f(x) \in T_x\mathcal{M}$ is the unique tangent vector satisfying
$$  \left< \grad f(x), \xi \right> = df(x)[\xi], \forall \xi\in T_x\mathcal{M}. $$ If $\mathcal{M}$ is a compact Riemannian manifold embedded in Euclidean space, we have that $\grad f(x) = \mcP_{T_x\mathcal{M}}(\nabla f(x))$, where $\nabla f(x)$ is Euclidean gradient, $\mcP_{T_x \mathcal{M}}$ is the projection operator onto the tangent space $T_x \mathcal{M}$. 
%The Riemannian Hessian of $f$ at a point $x$ in $\mathcal{M}$ is the linear mapping $\Hess f(x)$ of $T_x\mathcal{M}$ into itself defined by $\Hess f(x)[\xi_x]  = \hat{\nabla}_{\xi_x}\grad f(x)$ for all $\xi_x $ in $T_x\mathcal{M}$, where $\hat{\nabla}$ is the Riemannian connection on $\mathcal{M}$. In particular, when $\mathcal{M}$ is an embedded manifold of Euclidean space, the Riemannian Hessian is defined as $ \Hess f(x)[z] = \mcP_{T_x\mathcal{M}}(\mbox{Dgrad}f(x)[z])$, where $\mbox{Dgrad}f(x)$ is the differential of $\grad f(x)$.
The retraction operator is one of the most important ingredients for manifold optimization, which turns an element of $T_x\mathcal{M}$ into a point in $\mathcal{M}$.

%\begin{definition}[Riemannian submanifold]
%Suppose $\mathcal{M}$ is a differentiable submanifold of $\mathcal{E}$. We call $\mathcal{M}$ to be a Riemannian submanifold of $\mathcal{E}$, if for any $x\in\mathcal{M}$ the tangent space $T_x\mathcal{M}$ is endowed with the Euclidean inner product; that is for any $\eta,\xi\in T_x\mathcal{M}$， if we let $T_x\mathcal{M}$ be embedded in $\mathcal{E}$ as a subspace, then inner product on $T_x\mathcal{M}$ is defined as $\left<\eta, \xi\right>_x: = \left<\eta,\xi\right>$, where the latter is the standard Euclidean inner product. Hence the norm $\|\cdot\|_x$ induced by $\left<\cdot,\cdot\right>_x$ is also the same as the standard $L_2$ norm .
%\end{definition}

\begin{definition}[Retraction]\label{def-retr}
  A retraction on a manifold $\mathcal{M}$ is a smooth mapping $\mcR:T\mathcal{M}\rightarrow \mathcal{M}$ with the following properties. Let $\mcR_x:T_x\mathcal{M} \rightarrow \mathcal{M}$ be the restriction of $\mcR$ at $x$:
\begin{itemize}
  \item $\mcR_x(0_x) = x$, where $0_x$ is the zero element of $T_x\mathcal{M}$,
  \item $d\mcR_x(0_x) = \mathrm{id}_{T_x\mathcal{M}}$,where $\mathrm{id}_{T_x\mathcal{M}}$ is the identity mapping on $T_x\mathcal{M}$.
\end{itemize}
\end{definition}

\subsection{Subdifferential and Moreau envelope}

% In this section, we present the smoothing technique, in which the nonsmooth term in objective function is replaced by its Moreau envelop, we analysis the properties of the Moreau envelop includes smoothness and convexity. Then, we will discuss the retraction-smoothness  of Moreau envelop over Riemannian manifold $\mathcal{M}$ when $\mathcal{M}$ is a Riemannian submanifold in Euclidean space.

Let $\varphi: \mathbb{R}^{n} \rightarrow(-\infty,+\infty]$ be a proper, lower semicontinuous, and extended real-valued function. The domain of $\varphi$ is defined as $\operatorname{dom}(\varphi)=\{{x} \in$ $\left.\mathbb{R}^{n}: \varphi({x})<+\infty\right\}$. A vector ${v} \in \mathbb{R}^{n}$ is said to be a Fr\'{e}chet subgradient of $\varphi$ at ${x} \in \operatorname{dom}(\varphi)$ if
\begin{equation}\label{eq:subdiff}
   \liminf _{ \substack{{y} \rightarrow {x}\\{y} \neq {x}} }
   \frac{\varphi({y})-\varphi({x})-\langle{v}, {y}-{x}\rangle}{\|{y}-{x}\|} \geq 0.
\end{equation}
The set of vectors ${v} \in \mathbb{R}^{p}$ satisfying \eqref{eq:subdiff} is called the Fr\'{e}chet subdifferential of $\varphi$ at ${x} \in \operatorname{dom}(\varphi)$ and denoted by $\widehat{\partial} \varphi({x})$. The limiting subdifferential, or simply the subdifferential, of $\varphi$ at ${x} \in$ $\operatorname{dom}(\varphi)$ is defined as
$$
\partial \varphi({x})=\left\{{v} \in \mathbb{R}^{n}: \exists {x}^{k} \rightarrow {x}, {v}^{k} \rightarrow {v} \text { with } \varphi({x}^{k}) \rightarrow \varphi({x}), {v}^{k} \in \widehat{\partial} \varphi ({x}^{k})\right\}.
$$
By convention, if ${x} \notin \operatorname{dom}(\varphi)$, then $\partial \varphi({x})=\emptyset.$ The domain of $\partial \varphi$ is defined as $\operatorname{dom}(\partial \varphi)=$ $\left\{{x} \in \mathbb{R}^{n}: \partial \varphi({x}) \neq \emptyset\right\}.$ For the indicator function $\delta_{\mathcal{S}}: \mathbb{R}^{n} \rightarrow\{0,+\infty\}$ associated with the non-empty closed set $\mathcal{S} \subseteq \mathbb{R}^{n}$, we have
$$
\widehat{\partial} \delta_{\mathcal{S}}({x})=\left\{{v} \in \mathbb{R}^{n}: \limsup _{{y} \rightarrow {x}, {y} \in \mathcal{S}} \frac{\langle{v}, {y}-{x}\rangle}{\|{y}-{x}\|} \leq 0\right\} \quad \text { and } \quad \partial \delta_{\mathcal{S}}({x})=\mathcal{N}_{\mathcal{S}}({x})  
$$
for any ${x} \in \mathcal{S}$, where $\mathcal{N}_{\mathcal{S}}({x})$ is the normal cone to $\mathcal{S}$ at ${x}$.
\begin{definition}[Weakly convex]
Function $g$ is said to be $\rho$-weakly convex if $g+\frac{\rho}{2}\|\cdot\|^2$ is convex for some $\rho\geq 0$. 
\end{definition}
\begin{definition}
 For a proper, $\rho$-weakly convex and lower semicontinuous function $h:\mathbb{R}^m \rightarrow \mathbb{R}$, the Moreau envelope of $h$ with the parameter $\mu\in(0,\rho^{-1})$ is given by
 \begin{equation*}
     h_{\mu}(y): = \inf_{z\in\mathbb{R}^m} \left\{h(z) + \frac{1}{2\mu }\|z- y\|^2  \right\}.
 \end{equation*}
\end{definition}

The provided lemma, stated as Lemma 4.1 in \cite{bohm2021variable}, establishes a relationship between the function values of two Moreau envelopes with distinct parameters.
\begin{lemma}\label{lemma-1}\cite[Lemma 4.1]{bohm2021variable} 
  Let $h$ be a proper, closed, and $\rho$-weakly convex function, and $h$ is also $\ell_h$-Lipschitz continuous.  Then 
  \begin{equation*}
      h_{\mu_2}(y) \leq h_{\mu_1}(y) + \frac{1}{2} \frac{\mu_1 - \mu_2}{\mu_2} \mu_1 \ell_h^2,
  \end{equation*}
 where $\mu_1$ and $\mu_2$  
 satisfy that  that $0< \mu_2 \leq \mu_1 <\rho^{-1}$.
\end{lemma}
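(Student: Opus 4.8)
The plan is to use the minimizer defining the larger-parameter envelope $h_{\mu_1}$ as a feasible candidate for $h_{\mu_2}$, and then to control its distance to $y$ through the Lipschitz constant of $h$. First I would fix $y$ and let $z_1$ be the minimizer in $h_{\mu_1}(y)=\inf_z\{h(z)+\frac{1}{2\mu_1}\|z-y\|^2\}$; this minimizer exists and is unique because, for $\mu_1<\rho^{-1}$, the map $z\mapsto h(z)+\frac{1}{2\mu_1}\|z-y\|^2$ is closed and strongly convex (write $h=(h+\frac{\rho}{2}\|\cdot\|^2)-\frac{\rho}{2}\|\cdot\|^2$ and absorb the concave quadratic; the remaining quadratic coefficient $\frac{1}{\mu_1}-\rho$ is positive). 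Since $z_1$ is in particular feasible for the infimum defining $h_{\mu_2}(y)$, we get $h_{\mu_2}(y)\le h(z_1)+\frac{1}{2\mu_2}\|z_1-y\|^2$, and substituting $h(z_1)=h_{\mu_1}(y)-\frac{1}{2\mu_1}\|z_1-y\|^2$ yields
\[
  h_{\mu_2}(y)\le h_{\mu_1}(y)+\frac12\Big(\frac{1}{\mu_2}-\frac{1}{\mu_1}\Big)\|z_1-y\|^2=h_{\mu_1}(y)+\frac12\,\frac{\mu_1-\mu_2}{\mu_1\mu_2}\,\|z_1-y\|^2.
\]

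Then I would bound $\|z_1-y\|$. First-order optimality for this strongly convex subproblem, together with the sum rule for the limiting subdifferential applied to the smooth quadratic term, gives $0\in\partial h(z_1)+\frac{1}{\mu_1}(z_1-y)$, i.e.\ $\frac{1}{\mu_1}(y-z_1)\in\partial h(z_1)$. Since $h$ is $\ell_h$-Lipschitz, every Fr\'{e}chet subgradient of $h$, and hence every limiting subgradient, has norm at most $\ell_h$; therefore $\|y-z_1\|\le\mu_1\ell_h$. Inserting $\|z_1-y\|^2\le\mu_1^2\ell_h^2$ into the previous display, where the inequality direction is preserved because $\mu_2\le\mu_1$ (so $\mu_1-\mu_2\ge0$), produces
\[
  h_{\mu_2}(y)\le h_{\mu_1}(y)+\frac12\,\frac{\mu_1-\mu_2}{\mu_1\mu_2}\,\mu_1^2\ell_h^2=h_{\mu_1}(y)+\frac12\,\frac{\mu_1-\mu_2}{\mu_2}\,\mu_1\ell_h^2,
\]
which is exactly the asserted bound.

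The only step that needs care is the sharp estimate $\|y-z_1\|\le\mu_1\ell_h$. A cruder argument---comparing $z_1$ against the trivial candidate $z=y$ in the $\mu_1$-problem, which gives $\frac{1}{2\mu_1}\|z_1-y\|^2\le h(y)-h(z_1)\le\ell_h\|y-z_1\|$ and hence $\|y-z_1\|\le2\mu_1\ell_h$---loses a factor of $4$ in the constant and fails to reproduce the stated inequality. Getting the right constant therefore genuinely requires the subgradient characterization of the proximal point together with the Lipschitz bound on $\partial h$; everything else is routine bookkeeping, and the weak convexity is used only to ensure that, for $\mu_1<\rho^{-1}$, the proximal subproblem is well posed and its first-order optimality condition valid.
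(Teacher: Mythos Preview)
Your argument is correct. Note that the paper does not supply its own proof of this lemma; it is quoted verbatim from \cite[Lemma~4.1]{bohm2021variable}, and the sharp bound $\|y-z_1\|\le\mu_1\ell_h$ you derive via the subgradient inclusion is precisely the estimate the paper records separately as \eqref{eq:moreau-gradient-bound} in Proposition~\ref{propos-1}, so your route is the expected one.
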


The following result demonstrates that the Moreau envelope of a weakly convex function is not only continuously differentiable but also possesses a bounded gradient norm.
\begin{proposition}\label{propos-1}\cite[Lemma 3.3]{bohm2021variable}
Let $h$ be a proper, $\rho$-weakly convex, and $\ell_h$-Lipschitz continuous. Denote $\mu\in (0,\rho^{-1})$. Then,  Moreau envelope $h_{\mu}$ has Lipschitz continuous gradient over $\mbR^n$ with constant $\max\{\mu^{-1}, \frac{\rho}{1-\rho \mu}\}$, and the gradient is given by 
  \begin{equation}\label{moreau envelope gradient}
   \nabla h_{\mu}(x)  = \frac{1}{\mu}\left(x - \prox_{\mu h}(x) \right) \in \partial h\left(\prox_{\mu h}(x)\right),
  \end{equation}
where  $\prox_{\mu h}(x): = \arg\min_{y\in\mathbb{R}^n}
\{ h(y) + \frac{1}{2\mu}\|y-x\|^2 \}$ is the proximal operator. Moreover, it holds that
\begin{equation} \label{eq:moreau-gradient-bound}
\|\nabla h_{\mu}(x)\| \leq \ell_h, \; \|x - \prox_{\mu h}(x) \| \leq \mu \ell_h. 
\end{equation}
% In addition, $h_{\mu}$ also is continuously differentiable on $\mcM$ with Lipschitz constant $$M:=\max\{1/\mu, \frac{\rho}{1-\rho \mu}\}\alpha^2 +2 G\beta $$ in sense that
% $$
%     h_{\mu}(\mcR_{x}(\eta)) \leq h_{\mu}(x) + \left<\eta,\grad h_{\mu}(x)\right> + \frac{M}{2} \|\eta\|^2
% $$
%   for all $\eta\in T_{x}\mathcal{M}$. Its Riemannian gradient is given by $\grad h_{\mu}(x) = \mcP_{T_x\mcM}(\nabla h_{\mu}(x))$.

\end{proposition}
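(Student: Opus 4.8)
\emph{Proof plan.} The strategy is to absorb the weak-convexity modulus of $h$ into a completed square so that $h_\mu$ becomes, up to a quadratic shift, the classical (convex) Moreau envelope of the convex function $g:=h+\tfrac{\rho}{2}\|\cdot\|^2$; the gradient formula, the differentiability, and the precise Lipschitz constant can then be read off. \emph{Step 1 (the proximal map is well defined).} For $\mu\in(0,\rho^{-1})$ write
\[
 h(z)+\tfrac{1}{2\mu}\|z-y\|^2=\Big(h(z)+\tfrac{\rho}{2}\|z\|^2\Big)+\Big(\tfrac{1}{2\mu}-\tfrac{\rho}{2}\Big)\|z\|^2-\tfrac{1}{\mu}\langle z,y\rangle+\tfrac{1}{2\mu}\|y\|^2 .
\]
The first bracket is convex, proper and lower semicontinuous, and the remaining terms form a quadratic with curvature $\tfrac1\mu-\rho>0$; hence the objective is proper, lsc, coercive and $(\tfrac1\mu-\rho)$-strongly convex, so it has a unique minimizer, which defines $\prox_{\mu h}(y)$, and $h_\mu(y)=h(\prox_{\mu h}(y))+\tfrac{1}{2\mu}\|\prox_{\mu h}(y)-y\|^2$.

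\emph{Step 2 (gradient formula and subdifferential inclusion).} Put $p:=\prox_{\mu h}(y)$. Applying the limiting-subdifferential sum rule (one summand is $C^1$) to the first-order optimality condition at the minimizer $p$ gives $0\in\partial h(p)+\tfrac1\mu(p-y)$, i.e. $\tfrac1\mu(y-p)\in\partial h(p)$, which is the inclusion in \eqref{moreau envelope gradient}. For differentiability, set $\tilde\mu:=\mu/(1-\rho\mu)>0$; completing the square in the expansion of Step 1 yields the identity
\[
 h_\mu(y)=g_{\tilde\mu}\!\Big(\tfrac{\tilde\mu}{\mu}\,y\Big)-\tfrac{\rho}{2(1-\rho\mu)}\|y\|^2 ,
\]
where $g_{\tilde\mu}$ is the ordinary Moreau envelope of the convex function $g$ with parameter $\tilde\mu$. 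Since the Moreau envelope of a convex function is $C^1$ with $\tilde\mu^{-1}$-Lipschitz gradient, the chain rule shows $h_\mu\in C^1(\mathbb{R}^n)$, and comparison with Step 1 identifies $\nabla h_\mu(y)=\tfrac1\mu(y-p)$.

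\emph{Step 3 (the Lipschitz constant and the norm bounds).} The displayed identity exhibits $h_\mu+\tfrac{\rho}{2(1-\rho\mu)}\|\cdot\|^2$ as a composition of a convex function with a linear map, hence convex; so $h_\mu$ is $\tfrac{\rho}{1-\rho\mu}$-weakly convex. Conversely, plugging $p=\prox_{\mu h}(y)$ into the infimum defining $h_\mu(y')$ gives, for all $y,y'$,
\[
 h_\mu(y')\le h(p)+\tfrac{1}{2\mu}\|p-y'\|^2=h_\mu(y)+\big\langle\tfrac1\mu(y-p),\,y'-y\big\rangle+\tfrac{1}{2\mu}\|y'-y\|^2 ,
\]
so $h_\mu-\tfrac{1}{2\mu}\|\cdot\|^2$ is concave, i.e. $h_\mu$ is $\mu^{-1}$-weakly concave. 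A differentiable function that is simultaneously $\alpha$-weakly convex and $\beta$-weakly concave has $\max\{\alpha,\beta\}$-Lipschitz gradient; applying this with $\alpha=\tfrac{\rho}{1-\rho\mu}$, $\beta=\mu^{-1}$ gives the stated constant. Finally, by Step 2 we have $\nabla h_\mu(y)\in\partial h(\prox_{\mu h}(y))$, and since $h$ is $\ell_h$-Lipschitz every element of its limiting subdifferential has norm at most $\ell_h$; hence $\|\nabla h_\mu(y)\|\le\ell_h$, and multiplying by $\mu$ gives $\|y-\prox_{\mu h}(y)\|\le\mu\ell_h$, which is \eqref{eq:moreau-gradient-bound}.

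\emph{Main obstacle.} Everything except Step 3 is routine; the delicate point is extracting the \emph{exact} constant $\max\{\mu^{-1},\rho/(1-\rho\mu)\}$. The two natural estimates — weak convexity with modulus $\rho/(1-\rho\mu)$ and weak concavity with modulus $\mu^{-1}$ — only yield monotonicity-type inequalities for $\nabla h_\mu$; upgrading these to a genuine operator-norm Lipschitz bound requires either a Baillon--Haddad/cocoercivity argument applied to $\nabla(h_\mu+\tfrac{\rho}{2(1-\rho\mu)}\|\cdot\|^2)$ or a direct computation through the reparametrization of Step 2, since a naive combination would only give the weaker constant $\mu^{-1}+\rho/(1-\rho\mu)$.
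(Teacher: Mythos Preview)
The paper does not actually prove this proposition; it simply quotes it as \cite[Lemma~3.3]{bohm2021variable} and uses it as a black box. So there is no ``paper's own proof'' to compare against --- you are supplying an argument the authors deliberately outsourced.

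Your plan is sound and essentially the standard one. Step~1 and Step~2 are routine; the reparametrization identity
\[
h_\mu(y)=g_{\tilde\mu}\!\Big(\tfrac{\tilde\mu}{\mu}\,y\Big)-\tfrac{\rho}{2(1-\rho\mu)}\|y\|^2,\qquad g=h+\tfrac{\rho}{2}\|\cdot\|^2,\quad \tilde\mu=\tfrac{\mu}{1-\rho\mu},
\]
is correct and is exactly what the cited reference uses. Your derivation of the weak concavity with modulus $\mu^{-1}$ by plugging $p=\prox_{\mu h}(y)$ into the infimum is clean and indeed yields concavity of $h_\mu-\tfrac{1}{2\mu}\|\cdot\|^2$, not just a descent-type inequality.

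You are also right that Step~3 is the only nontrivial point, and you diagnose the obstacle accurately: the naive triangle-inequality combination of the two one-sided bounds loses the constant. For the record, the Baillon--Haddad route you mention does close the gap: since $\psi:=h_\mu+\tfrac{\alpha}{2}\|\cdot\|^2$ (with $\alpha=\tfrac{\rho}{1-\rho\mu}$) is convex and $(\alpha+\beta)$-smooth (with $\beta=\mu^{-1}$), cocoercivity of $\nabla\psi$ gives, after setting $u=\nabla h_\mu(x)-\nabla h_\mu(y)$ and $v=x-y$, the quadratic inequality $\|u\|^2\le(\beta-\alpha)\langle u,v\rangle+\alpha\beta\|v\|^2$, whose positive root in $\|u\|$ is exactly $\max\{\alpha,\beta\}\|v\|$. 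So the sharp constant falls out without any $C^2$ assumption. You might want to write this line out explicitly rather than leave it as a remark about the obstacle.
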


The Moreau envelope $h_{\mu}$ can be used for approximating nonsmooth function $h$, and  parameter $\mu$ is used to control the smoothness of $h_{\mu}$.  %Next, we will discuss the relationship between smoothness in Euclidean space and manifolds, which is crucial for the later convergence rate analysis of our algorithm. 
Finally, we provide the formal definition of retraction smoothness for a retraction operator $\mathcal{R}$. This concept plays a crucial role in the convergence analysis of the algorithms proposed in the subsequent section.
\begin{definition}\cite[Retraction smooth]{grocf}
  A function $f:\mathcal{M}\rightarrow\mathbb{R}$ is said to be  retraction smooth (short to retraction-smooth) with constant $\ell$ and a retraction $\mcR$,  if for   $\forall~x, y\in\mathcal{M}$  it holds that
  \begin{equation}\label{taylor expansion}
    f(y) \leq f(x) + \left<\grad f(x), \eta\right> + \frac{\ell}{2}\|\eta\|^2,
  \end{equation}
  where $\eta\in T_x\mathcal{M}$ and $y= \mcR_x(\eta)$.
\end{definition}

Throughout this paper, we make the following assumptions.
\begin{assumption}\label{assum}
%We have the following four assumptions.

\text{ }
\begin{enumerate}%[label={\theassumption.\textbf{\Alph*:}},
%  ref={\theassumption.\Alph*}]
 [label={\textbf{\Alph*:}},
  ref={\theassumption.\Alph*}]
  \item\label{assump-A}%
    The manifold $\mathcal{M}$ is a compact Riemannian submanifold embedded in  $\mathbb{E}$;
\item\label{assump-B}%
    The function $f$ is $\ell_{\nabla f}$-smooth but  not necessarily convex. % on $\mathbb{E}$,
  The function $h$ is    $\rho$-weakly convex  and $\ell_h$-Lipschitz continuous, but is not necessarily smooth; $\varphi$ is bounded from below.
  \item\label{assump-C}
  There exist constants $\alpha > 0 $ and $\beta>0$ such that, for all $x\in \mathcal{M}$ and  all $u \in T_{x}\mathcal{M}$, we have
   \begin{equation}\label{alpha}
  \left\{ \begin{aligned}
     \|\mathcal{R}_x(u) - x\| &\leq \alpha \|u\|, \\
     \|\mathcal{R}_x(u) - x - u\| &\leq \beta \|u\|^2.
   \end{aligned}\right.
   \end{equation}
\end{enumerate}
\end{assumption}

%
%\begin{lemma}
%  Let $\mathcal{E}$ be a Euclidean space (for example, $\mathcal{E} = \mathbb{R}^n$) and let $\mathcal{M}$ be a compact Riemannian submanifold of $\mathcal{E}$. Let $Retr$ be a retraction on $\mathcal{M}$. if $f:\mathcal{E}\rightarrow \mathbb{R}$ is convex in the convex hull of $\mathcal{M}$, then the pullbacks $f\circ \mbox{Retr}_x$ satisfy
%  \begin{equation}
%    f(\mbox{Retr}_{x^k}(\eta)) \geq F_k(x^k) + \left<\eta,\grad F_k(x^k)\right> - G\beta\|\eta\|^2
%  \end{equation}
%  for all $\eta\in T_{x^k}\mathcal{M}$.
%\end{lemma}
%
\begin{remark}
Assumption \ref{assump-C} is standard: The first inequality is common, and the second inequality follows from the fact that a retraction operator on a compact submanifold always satisfies a second-order boundedness property \cite{grocf}: $\mathcal{R}_x(u) = x + u + \mathcal{O}(\|u\|^2)$.
\end{remark}
\section{Stationary points}
In this section, we will discuss the stationary point of problem \eqref{prob:p1} and the smoothed problem induced by the Moreau envelope function. Recall the original problem, i.e., 
\begin{equation}\label{pro:p3}
    \min_x f(x) + h(\mathcal{A}x), ~~\mathrm{s.t. }~~ x\in\mathcal{M}.
\end{equation}
We say $x\in\mathcal{M}$ is  an $\epsilon$-stationary point of problem \eqref{pro:p3} if there exists $\zeta\in\partial h(\mathcal{A}x)$ such that $\|P_{T_x\mathcal{M}}(\nabla f(x) + \mathcal{A}^*\zeta)\| \leq \epsilon$. As in \cite{zhang2020complexity}, there is no finite time algorithm that can guarantee $\epsilon$-stationarity in the nonconvex nonsmooth setting.  Motivated by the notion of $(\delta,\epsilon)$-stationarity introduced in  \cite{zhang2020complexity}, we introduce a generalized $\epsilon$-stationarity for problem \eqref{pro:p3}. 
%We give the definition of the  generalized $\epsilon$-stationary point of problem \eqref{pro:p3} as follows.
\begin{definition}\label{defi-generalized}
 We say that, $y\in\mathbb{R}^n$ satisfies the generalized $\epsilon$-stationary condition of problem \eqref{pro:p3} if there exists $x\in\mathcal{M}$  such that
\begin{equation}\label{epsilon-kkt1}
    \left\{\begin{aligned}
    \mathrm{dist}\left(0,  P_{T_x\mathcal{M}}  (\nabla f(y) + \mathcal{A}^* \partial h(\mathcal{A}y))  \right) \leq \epsilon, \\
    \|x - y\| \leq \epsilon.
    \end{aligned}\right.
\end{equation}
\end{definition}

\begin{remark}
Note that $x$ is indeed a generalized $\epsilon$-stationary point if $x$ is an $\epsilon$-stationary point. If $\epsilon = 0$ in Definition \ref{defi-generalized}, the generalized $\epsilon$-stationarity will reduce  to $\epsilon$-stationarity (actually ‘stationarity’). Moreover, when $\mathcal{M} = \mathbb{R}^n$, our generalized $\epsilon$-stationarity coincides with the classical $\epsilon$-stationarity for minimizing a weakly convex function $\phi$, i.e., $\mathrm{dist}(0,\partial \phi(x)) \leq \epsilon$.

\end{remark}

Consider the associated smoothing problem:
\begin{equation}\label{pro:p3-smoothing}
\min_x  \quad  f(x)+h_{\mu}(\mathcal{A} (x)),\quad \text{s.t.}~~  x\in\mcM.
\end{equation}
We say that $x^*\in\mcM$ satisfies the $\epsilon$-stationary condition of problem \eqref{pro:p3-smoothing} if
\begin{equation}\label{equ:epsilon-station-smoothed-1}
  \|\mcP_{T_{x^*}\mcM}(\nabla f(x^*) + \mcA^* \nabla h_{\mu}(\mcA x^*))   \| \leq \epsilon.
\end{equation}
The relationship between the $\epsilon$-stationary point of problem \eqref{pro:p3} and \eqref{pro:p3-smoothing} is elaborated as follows. The proof is  followed by  \cite{bohm2021variable}.

\begin{lemma}\label{lemma-2} 
Suppose that Assumption \ref{assum} holds and $\mathcal{A}$ is surjective. If $x$ is an $\epsilon$-stationary point of problem \eqref{pro:p3-smoothing}, and  $$
0 < \mu \leq \min \left\{  \frac{1}{\ell_h\ell_{\nabla f}},\frac{2}{\ell_h} \right\} \sigma_{\min}(\mathcal{A})\epsilon,
 $$ where $\sigma_{\min}(\mathcal{A})$ is the smallest singular value of $\mathcal{A}$, and  $y$  satisfies  that  $\mcA y = \prox_{\mu h}(\mcA x)$, then  $y$ is a generalized $2\epsilon$-stationary point of problem \eqref{pro:p3}. 
\end{lemma}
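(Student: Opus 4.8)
The plan is to exhibit a suitable $y\in\mathbb{R}^n$ and then verify, at the pair $(x,y)$ with $\epsilon$ replaced by $2\epsilon$, the two requirements of Definition \ref{defi-generalized}, where $x\in\mathcal{M}$ is the $\epsilon$-stationary point of \eqref{pro:p3-smoothing} supplied by the hypothesis. The right choice of $y$ is the one orthogonal to $\ker\mathcal{A}$: since $\mathcal{A}$ is surjective, I would take $y := x + \mathcal{A}^{\dagger}\bigl(\prox_{\mu h}(\mathcal{A}x) - \mathcal{A}x\bigr)$ with $\mathcal{A}^{\dagger}$ the Moore--Penrose pseudoinverse, so that $\mathcal{A}y = \prox_{\mu h}(\mathcal{A}x)$ and $y-x\perp\ker\mathcal{A}$. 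The structural observation that drives the whole argument is then that, by Proposition \ref{propos-1}, the vector $\zeta := \nabla h_{\mu}(\mathcal{A}x) = \tfrac1\mu\bigl(\mathcal{A}x - \prox_{\mu h}(\mathcal{A}x)\bigr)$ lies in $\partial h\bigl(\prox_{\mu h}(\mathcal{A}x)\bigr) = \partial h(\mathcal{A}y)$; hence $\zeta$ is an admissible element of the set $\partial h(\mathcal{A}y)$ appearing in \eqref{epsilon-kkt1}, and $\mathcal{A}^{*}\zeta$ is precisely the term occurring in the smoothed stationarity condition \eqref{equ:epsilon-station-smoothed-1} at $x$.

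Next I would bound $\|y-x\|$. From \eqref{eq:moreau-gradient-bound} we have $\|\mathcal{A}x - \prox_{\mu h}(\mathcal{A}x)\|\le\mu\ell_h$; since $y-x$ is orthogonal to $\ker\mathcal{A}$, we get $\|\mathcal{A}(y-x)\|\ge\sigma_{\min}(\mathcal{A})\|y-x\|$, so $\|y-x\|\le\mu\ell_h/\sigma_{\min}(\mathcal{A})$. The hypothesis $\mu\le\tfrac{2}{\ell_h}\sigma_{\min}(\mathcal{A})\epsilon$ then gives $\|y-x\|\le2\epsilon$, which is the second inequality in \eqref{epsilon-kkt1}. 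For the first inequality, I would use that $P_{T_x\mathcal{M}}$ is a $1$-Lipschitz orthogonal projection, the triangle inequality, the $\epsilon$-stationarity \eqref{equ:epsilon-station-smoothed-1} of $x$, and the $\ell_{\nabla f}$-smoothness of $f$ (in the ambient Euclidean sense, since $y$ need not lie on $\mathcal{M}$):
\begin{align*}
\mathrm{dist}\bigl(0,P_{T_x\mathcal{M}}(\nabla f(y)+\mathcal{A}^{*}\partial h(\mathcal{A}y))\bigr)
&\le\bigl\|P_{T_x\mathcal{M}}(\nabla f(y)+\mathcal{A}^{*}\zeta)\bigr\|\\
&\le\bigl\|P_{T_x\mathcal{M}}(\nabla f(x)+\mathcal{A}^{*}\zeta)\bigr\|+\|\nabla f(y)-\nabla f(x)\|\\
&\le\epsilon+\ell_{\nabla f}\|y-x\|.
\end{align*}
Combining with the bound on $\|y-x\|$ and the hypothesis $\mu\le\tfrac{1}{\ell_h\ell_{\nabla f}}\sigma_{\min}(\mathcal{A})\epsilon$ yields $\ell_{\nabla f}\|y-x\|\le\ell_{\nabla f}\mu\ell_h/\sigma_{\min}(\mathcal{A})\le\epsilon$, so the residual is at most $2\epsilon$, completing the verification.

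I expect the only genuinely delicate point to be the choice of $y$: among the (generally infinitely many) solutions of $\mathcal{A}y=\prox_{\mu h}(\mathcal{A}x)$ one must single out the one with $y-x\perp\ker\mathcal{A}$, since this is exactly what allows $\sigma_{\min}(\mathcal{A})$ — rather than an uncontrollable quantity — to appear in the estimate of $\|y-x\|$, and this is where surjectivity of $\mathcal{A}$ is used essentially. Everything else is a routine chain of triangle inequalities together with the two substitutions of the parameter bound on $\mu$; one should also be careful to record that the Lipschitz gradient estimate on $f$ is applied to its ambient extension, as $y$ lies in $\mathbb{R}^n$ and not necessarily on $\mathcal{M}$.
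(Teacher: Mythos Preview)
Your proposal is correct and follows essentially the same approach as the paper: the paper also selects $y = x - \mathcal{A}^{\dagger}(\mathcal{A}x - \prox_{\mu h}(\mathcal{A}x))$ (equivalently, your least-squares choice), bounds $\|x-y\|$ via $\|\mathcal{A}^{\dagger}\|\le\sigma_{\min}^{-1}(\mathcal{A})$ together with \eqref{eq:moreau-gradient-bound}, and then uses the same triangle-inequality chain with $\nabla h_{\mu}(\mathcal{A}x)\in\partial h(\prox_{\mu h}(\mathcal{A}x))$ and the $\ell_{\nabla f}$-smoothness of $f$. Your explicit remark that $y-x\perp\ker\mathcal{A}$ (hence $\sigma_{\min}(\mathcal{A})$ controls $\|y-x\|$) and that the ambient Lipschitz bound on $\nabla f$ is what is being invoked are useful clarifications, but the argument is the same.
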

 \begin{proof}
Let $y$ be the projection point to the set $\{ x^{\prime}: \mathcal{A}x^{\prime} =  \prox_{\mu h}(\mcA x)\}$, i.e., 
 \begin{equation}
     y = \min_{x^{\prime}\in \mathbb{R}^n} \{\| x - x^{\prime} \|^2 ~:~   \mathcal{A}x^{\prime}= \prox_{\mu h}(\mcA x)\},
 \end{equation}
which is given explicitly by  $y = x - \mathcal{A}^{\dagger}(\mathcal{A}x - \prox_{\mu h}(\mcA x) )$,  where $\mathcal{A}^{\dagger} = \mathcal{A}^*(\mcA\mcA^*)^{-1}$. Since $ f$ is $\ell_{\nabla f}$-Lipschitz continuous, $h$ is $\ell_h$-Lipschitz continuous and the norm of $\mcA^{\dagger}$ is bounded by $\sigma_{\min}^{-1}(\mcA)$, we have that $\| x-y \| \leq \sigma_{\min}^{-1}(\mcA)\mu \ell_h \leq 2\epsilon$.  Furthermore, follows  \eqref{equ:epsilon-station-smoothed-1} and Assumption \ref{assump-B} we have
     \begin{equation*}\label{eq:composite}
    \begin{aligned}
& \mathrm{dist}\left(0, \mcP_{T_{x}\mcM}\left(\nabla f(y) + \mcA^* \partial h(\mcA y)\right)   \right)\\
&= \mathrm{dist}\left(0,\mcP_{T_{x}\mcM}\left(\nabla f(y)-\nabla f(x) + \nabla f(x) + \mcA^* \partial h(\mcA y)\right)   \right) \\
& \leq \|\mcP_{T_{x}\mcM}\left(\nabla f(y)-\nabla f(x) \right)   \| + \mathrm{dist}\left(0, \mcP_{T_{x}\mcM}\left(\nabla f(x) + \mcA^* \partial h(\prox_{\mu h}(\mcA x))\right)   \right) \\ &\leq \ell_{\nabla f}\|x-y\| +   \|\mcP_{T_{x}\mcM}(\nabla f(x) + \mcA^* \nabla h_{\mu}(\mcA x))   \|  \\
  & \leq \ell_{\nabla f} \sigma_{\min}^{-1}(\mathcal{A}) \mu \ell_h    + \epsilon \leq 2\epsilon,
\end{aligned}
\end{equation*}
where the second inequality utilizes that the projection on tangent space is non-expansive and  $\nabla h_{\mu}(\mcA x) \in \partial h(\prox_{\mu h}(\mcA x)) $,  which is given  in Proposition \ref{propos-1}. 
 \end{proof}

	\section{Riemannian homotopy smoothing framework}

%We describe our variable smoothing approaches for the problem \eqref{prob:p1}. We define the smoothed approximation $F_k$ in $k$-iteration as follows: 
In this section, we focus on the problem \eqref{pro:p3}.  Let
\begin{equation}\label{eq:Fk}
F_k(x): = f(x) + h_{\mu_k}(\mcA x),    
\end{equation}
which is a smoothing approximation of $F(x)$. 
By Proposition \ref{propos-1} and chain rule, we obtain% its Euclidean gradient:
\begin{equation}\label{equ:gradient}
  \nabla F_k(x) = \nabla f(x) + \frac{1}{\mu_k} \mcA^*(\mcA x - \prox_{\mu_k h}(\mcA x)),
\end{equation}
and its Riemannian  gradient:
\begin{equation}\label{eq:riestochas2}
    \grad F_k(x) = \mcP_{T_x\mcM}(\nabla F_k(x )).
\end{equation}
The following lemma states that, if $\mathcal{M}$ is a compact submanifold of Euclidean space $\mathcal{E}$, then the smoothness of function on  a compact subset of $\mathcal{E}$ reduces to its retraction-smoothness over $\mathcal{M}$. This is an extension of Lemma 2.7 in \cite{grocf}.

\begin{lemma}\label{Euclidean vs manifold} 
  Let $\mathcal{E}$ be a Euclidean space $($for example, $\mathcal{E} = \mathbb{R}^n)$ and $\mathcal{M}$ be a compact Riemannian submanifold of $\mathcal{E}$. Suppose that Assumption \ref{assum} holds.  Then, for $F_k$  given in \eqref{eq:Fk}, there exists finite $G$ such that $\|\nabla F_k(x)\|\leq G$ for all $x\in\mathcal{M}$, and $F_k$ is retraction-smooth in the sense that
  \begin{equation}\label{equ:L2}
    F_k(\mcR_{x}(\eta)) \leq F_k(x) + \left<\eta,\grad F_k(x)\right> + \frac{\ell_k}{2}\|\eta\|^2
  \end{equation}
  for all $\eta\in T_{x}\mathcal{M}$, where $$\ell_k = \alpha^2 \ell_{\nabla f}  + \alpha^2 \|\mcA\|^2 \max\{\mu_k^{-1}, \frac{\rho}{1-\rho \mu_k}\}+2 G \beta.$$   If  $\rho \mu_k <1/2$, it reduces to  
\begin{equation}\label{equ:L}
    \ell_k = \alpha^2 \ell_{\nabla f}  + \alpha^2\|\mcA\|^2\mu_k^{-1} +2 G \beta.
\end{equation}
\end{lemma}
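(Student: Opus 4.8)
The plan is to reduce the retraction-smoothness estimate on $\mathcal{M}$ to the ordinary descent lemma for the ambient Euclidean function $\tilde F_k(x):=f(x)+h_{\mu_k}(\mathcal{A}x)$, $x\in\mathbb{R}^n$ (here $f$ is understood as the restriction to $\mathcal{M}$ of its smooth ambient extension), in the spirit of the proof of Lemma~2.7 in \cite{grocf}, and then to absorb the retraction errors via Assumption~\ref{assump-C}. First I would settle the gradient bound: since $f$ has $\ell_{\nabla f}$-Lipschitz gradient, $\nabla f$ is continuous and hence bounded on the compact set $\mathcal{M}$; combining this with $\|\nabla h_{\mu_k}(\mathcal{A}x)\|\le \ell_h$ from \eqref{eq:moreau-gradient-bound} and the identity $\nabla F_k(x)=\nabla f(x)+\mathcal{A}^*\nabla h_{\mu_k}(\mathcal{A}x)$ yields $\|\nabla F_k(x)\|\le \sup_{x\in\mathcal{M}}\|\nabla f(x)\|+\|\mathcal{A}\|\ell_h=:G<\infty$.

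Next I would record that $\tilde F_k$ has $L_k$-Lipschitz gradient on $\mathbb{R}^n$ with $L_k:=\ell_{\nabla f}+\|\mathcal{A}\|^2\max\{\mu_k^{-1},\frac{\rho}{1-\rho\mu_k}\}$: indeed $\nabla(h_{\mu_k}\circ\mathcal{A})=\mathcal{A}^*\nabla h_{\mu_k}(\mathcal{A}\cdot)$, and by Proposition~\ref{propos-1} the map $\nabla h_{\mu_k}$ is $\max\{\mu_k^{-1},\frac{\rho}{1-\rho\mu_k}\}$-Lipschitz, so the chain rule picks up a factor $\|\mathcal{A}^*\|\,\|\mathcal{A}\|=\|\mathcal{A}\|^2$. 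The Euclidean descent lemma then gives, for any $x\in\mathcal{M}$, $\eta\in T_x\mathcal{M}$, and $y=\mathcal{R}_x(\eta)\in\mathcal{M}$,
\[
\tilde F_k(y)\ \le\ \tilde F_k(x)+\langle \nabla \tilde F_k(x),\,y-x\rangle+\tfrac{L_k}{2}\|y-x\|^2 .
\]

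Finally comes the reduction step. Since $F_k$ and $\tilde F_k$ coincide on $\mathcal{M}$, the left-hand side equals $F_k(\mathcal{R}_x(\eta))$ and $\tilde F_k(x)=F_k(x)$. I would split $y-x=\eta+(\mathcal{R}_x(\eta)-x-\eta)$; because $\eta\in T_x\mathcal{M}$ and $\grad F_k(x)=\mathcal{P}_{T_x\mathcal{M}}(\nabla F_k(x))$, the normal component of $\nabla F_k(x)$ is annihilated upon pairing with $\eta$, so $\langle \nabla F_k(x),\eta\rangle=\langle \grad F_k(x),\eta\rangle$; the leftover term is controlled by Cauchy–Schwarz together with the second bound in \eqref{alpha} and $\|\nabla F_k(x)\|\le G$, giving $|\langle \nabla F_k(x),\mathcal{R}_x(\eta)-x-\eta\rangle|\le G\beta\|\eta\|^2$; and $\|y-x\|^2\le\alpha^2\|\eta\|^2$ by the first bound in \eqref{alpha}. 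Collecting these estimates produces \eqref{equ:L2} with $\ell_k=\alpha^2 L_k+2G\beta$, which is precisely the stated constant. For the last assertion, $\rho\mu_k<1/2$ is exactly the condition under which $\frac{\rho}{1-\rho\mu_k}\le\mu_k^{-1}$, so the maximum collapses to $\mu_k^{-1}$ and \eqref{equ:L} follows. The computation is largely routine; the only point needing care is this decomposition of $y-x$ into its tangential part $\eta$ and the $\mathcal{O}(\|\eta\|^2)$ retraction remainder, since it is what converts the ambient Euclidean gradient appearing in the descent lemma into the Riemannian gradient in \eqref{equ:L2}.
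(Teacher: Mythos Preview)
Your argument is correct and is essentially the paper's approach: the paper simply cites Lemma~2.7 of \cite{grocf} for the reduction from Euclidean $L_k$-smoothness to retraction-smoothness with constant $\alpha^2 L_k+2G\beta$, whereas you have written out that reduction in full (descent lemma, the splitting $y-x=\eta+(\mathcal{R}_x(\eta)-x-\eta)$, and the bounds from Assumption~\ref{assump-C}). Your explicit bound $G=\sup_{\mathcal{M}}\|\nabla f\|+\|\mathcal{A}\|\ell_h$ is in fact a little sharper than the paper's bare compactness argument, since it makes transparent that $G$ is independent of $k$ (a point the paper's proof does not address but which is needed when $G$ enters the fixed constant $\omega_1$ later).
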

\begin{proof}
    By Proposition \ref{propos-1}, one shows that $\nabla F_k$  is Lipschitz continuous with constant $ \ell_{\nabla f}  + \|\mcA\|^2 \max\{\mu_k^{-1}, \frac{\rho}{1-\rho \mu_k}\}$.  Since $\nabla F_k$ is continuous on the compact manifold $\mathcal{M}$, there exists $G>0$ such that $\|\nabla F_k(x)\|\leq G$ for all $x\in\mathcal{M}$. The proof is completed by combining Assumption \ref{assump-C} with Lemma 2.7 in \cite{grocf}. 
\end{proof}

% By this lemma, we have
% $$
% \text{grad}F_k(x) = \mcP_{T_x\mcM}(\nabla F_k(x)).
% $$
% Let
% \begin{equation}\label{lips_constant}
% \ell_k = \alpha^2 \ell_{\nabla f}  + \|\mcA\|^2 \max\{1/\mu_k, \frac{\rho}{1-\rho \mu_k}\}\alpha^2 +2 G \beta
% \end{equation}
% which is the Lipschitz constant of the Riemannian gradient $\text{grad}F_k(x)$.

\subsection{Riemannian smoothing gradient method}Our first algorithm takes Riemannian gradient descent steps on the smoothed problem, which is
\begin{equation}\label{gradient_step}
  x^{k+1} = \mathcal{R}_{x^k}( - \gamma_k \grad   F_k(x^k)).
\end{equation}
The basic algorithm is described in Algorithm \ref{alg:full}. This algorithm is a gradient method and  advanced splitting techniques \cite{wang2019global,themelis2020douglas} can be adopted to minimize \eqref{eq:Fk}. Note that the Riemannian ADMM \cite{li2022riemannian} shows a favorable performance. It should be noted that our focus is to derive a sharper complexity analysis.  

\begin{algorithm}
 \caption{Riemannian smoothing gradient method for   \eqref{pro:p3}.}
\label{alg:full}
 \textbf{Input: }{ $x^1\in\mathcal{M}$}\\
  \While{the terminal criteria is not met}
  {
    Set $\mu_k = (2\rho)^{-1}k^{-1/3} $, let $\ell_k$   be as in \eqref{equ:L}, and  $\gamma_k = 1/\ell_k$;\\
    $x^{k+1} = \mathcal{R}_{x^k}( - \gamma_k\grad  F_k(x^k));$
 
 \textbf{end} }
\end{algorithm}

  With Assumption \ref{assum}, we are going to show the convergence results of the proposed Riemannian smoothing gradient method, Algorithm \ref{alg:full}. The main difference with its Euclidean analog  \cite[Algorithm 1]{bohm2021variable} lies in the retraction-smooth result given in Lemma \ref{lemma-1} and the analysis of the new stationarity given in Definition \ref{defi-generalized}. Denote 
  \begin{equation}\label{Fstar}
     F^*:=\liminf_{ k\rightarrow \infty}F_k(x^k).
  \end{equation}
  By Lemma \ref{lemma-1} and Assumption \ref{assump-B}, one can obtain that $F_k$ is bounded from below for all $k$.

%\begin{theorem}\label{theorem-1}
%Suppose in  problem \eqref{pro:p3} that,  $h$ is $\rho$-weakly convex and Lipschitz continuous with constant $\ell_h$, and $\nabla f$  is Lipschitz continuous with constant  $\ell_{\nabla f}$. The iteration sequence $\{x^j\}$ is generated by Algorithm \ref{alg:full}, and
%\begin{equation}\label{eq:xstar}
%x^*_j = x^j - \mathcal{A}^{\dagger}(\mathcal{A}x^j - \prox_{\mu_j h}(\mathcal{A}x^j)), \ \ j=1,\cdots,k.
%\end{equation}
%Then we have
% \begin{equation*}
%             \min_{1\leq j \leq k} \|\mcP_{T_{x^j}\mcM}\left(\nabla f(x^*_j) + \mcA^* \partial h(\mathcal{A} x^*_j)\right)   \| \leq     \frac{C}{k^{1/3}},
  %      \end{equation*}
 %      and \begin{equation}\label{xsta-x}
  %         \|x^j - x_j^*\| \leq \sigma_{\min}^{-1}(\mcA) \ell_h (2\rho)^{-1} j^{-1/3}, j= 1,\cdots, k. 
  %     \end{equation} 
  %     Where   $$C :=  2 \sqrt{\alpha^2 \ell_{\nabla f}  + 2G\beta + 2\rho \alpha^2 \|\mathcal{A}\|^2 } \sqrt{  F_1(x^1)  - F^* +(2\rho)^{-1} \ell_h^2 }.$$
%\end{theorem}

\begin{theorem}\label{theorem-1}
Suppose that Assumption \ref{assum}  holds and that the iteration sequence $\{x^k\}$ is generated by Algorithm \ref{alg:full}. Denote
\begin{equation}\label{eq:omega}
    \begin{aligned}
        \omega_1 & = \alpha^2 \ell_{\nabla f} +2G\beta + 2\rho   \alpha^2 \|\mathcal{A}\|^2, \\
        \omega_2 &= F_1(x^1) - F^*+ (2\rho)^{-1}\ell_h^2, \\
        \omega_3 &= (\ell_{\nabla f}\sigma_{\min}^{-1}(\mcA) (2\rho)^{-1.5} \ell_h)^2/(\alpha^2 \|\mathcal{A}\|^2),
    \end{aligned}
\end{equation}
where $G$ is defined in Lemma \ref{Euclidean vs manifold} and $F^*$ are defined \blue{as} \eqref{Fstar}.
Then we have for given $K\in \mathbb{N}$ that
\begin{equation}\label{eq:theorm2-min-1}
\begin{aligned}
    \min_{1\leq k \leq K} 
  \mathrm{dist}\left(0,\mcP_{T_{x^k}\mcM}(\nabla f(x^k) + \mcA^*  \partial h(\prox_{\mu_k h}(\mcA x^k)))   \right)  &\leq    \frac{2\sqrt{\omega_1\omega_2}}{K^{1/3}}, \\
   \|\mcA x^k - \prox_{\mu_k h}(\mcA x^k)\|  & \leq \frac{  (2\rho)^{-1}\ell_h}{ k^{1/3}}.
\end{aligned}
        \end{equation}
        
If $\mcA$ is also surjective, and $\hat{x}^k= x^k - \mathcal{A}^{\dagger}(\mcA x^k - \prox_{\mu_k h}(\mcA x^k)),$ then we have 
 \begin{equation}\label{eq:theorem2-min-2}
 \begin{aligned}
  \min_{1\leq k \leq K} \mathrm{dist}\left(0, \mcP_{T_{x^k}\mcM}(\nabla f(\hat{x}^k) + \mcA^* \partial h(\mcA \hat{x}^k))   \right) & \leq     \frac{2\sqrt{(\omega_1\omega_3 \ln(3K)   +2\omega_1 \omega_2) }}{K^{1/3}},\\
   \|x^k - \hat{x}^k\| & \leq
           \frac{\sigma_{\min}^{-1}(\mcA) (2\rho)^{-1} \ell_h}{ k^{1/3}}. 
 \end{aligned}
 \end{equation}
\end{theorem}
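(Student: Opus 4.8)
The plan is to mirror the Euclidean analysis of \cite{bohm2021variable}, substituting the retraction-smoothness of $F_k$ from Lemma~\ref{Euclidean vs manifold} for the descent lemma in $\mbR^n$, and then tracking constants carefully enough to land on $\omega_1,\omega_2,\omega_3$.

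\textbf{Step 1 (telescoping descent).} Since $\gamma_k=1/\ell_k$ and $x^{k+1}=\mcR_{x^k}(-\gamma_k\grad F_k(x^k))$, plugging $\eta=-\gamma_k\grad F_k(x^k)$ into \eqref{equ:L2} yields the standard estimate $F_k(x^{k+1})\le F_k(x^k)-\frac{1}{2\ell_k}\|\grad F_k(x^k)\|^2$. To move from $F_k$ to $F_{k+1}$ at the new point I would apply Lemma~\ref{lemma-1} with $\mu_1=\mu_k\ge\mu_2=\mu_{k+1}$ (valid because $\mu_{k+1}\le\mu_k\le(2\rho)^{-1}<\rho^{-1}$), getting $F_{k+1}(x^{k+1})\le F_k(x^{k+1})+\tfrac12\tfrac{\mu_k-\mu_{k+1}}{\mu_{k+1}}\mu_k\ell_h^2$. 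Chaining these and summing over $k=1,\dots,K$ telescopes the function values; the residual $\tfrac{\ell_h^2}{2}\sum_k(\mu_k-\mu_{k+1})\tfrac{\mu_k}{\mu_{k+1}}$ is controlled by $\tfrac{\mu_k}{\mu_{k+1}}=((k+1)/k)^{1/3}\le 2^{1/3}$ and $\sum_k(\mu_k-\mu_{k+1})\le\mu_1=(2\rho)^{-1}$, hence is $\le(2\rho)^{-1}\ell_h^2$. Letting $K\to\infty$ and using that the partial sums of $\tfrac{1}{2\ell_k}\|\grad F_k(x^k)\|^2$ are nondecreasing while $\liminf_k F_k(x^k)=F^*$, I obtain $\sum_{k=1}^{K}\frac{1}{2\ell_k}\|\grad F_k(x^k)\|^2\le\omega_2$ for every $K$.

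\textbf{Step 2 (first pair of bounds).} With $\mu_k=(2\rho)^{-1}k^{-1/3}$ one has $\rho\mu_k=\tfrac12 k^{-1/3}\le\tfrac12$, so the max in Lemma~\ref{Euclidean vs manifold} equals $\mu_k^{-1}$ and $\ell_k$ has the form \eqref{equ:L}; using $k^{1/3}\ge1$ gives $\ell_k\le\omega_1 k^{1/3}$. Therefore $\omega_2\ge\sum_{k=1}^K\frac{1}{2\omega_1 k^{1/3}}\|\grad F_k(x^k)\|^2\ge\frac{1}{2\omega_1}\big(\min_{1\le k\le K}\|\grad F_k(x^k)\|^2\big)\sum_{k=1}^K k^{-1/3}\ge\frac{K^{2/3}}{2\omega_1}\min_{1\le k\le K}\|\grad F_k(x^k)\|^2$. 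By Proposition~\ref{propos-1}, $\grad F_k(x^k)=\mcP_{T_{x^k}\mcM}(\nabla f(x^k)+\mcA^*\nabla h_{\mu_k}(\mcA x^k))$ lies in $\mcP_{T_{x^k}\mcM}(\nabla f(x^k)+\mcA^*\partial h(\prox_{\mu_k h}(\mcA x^k)))$, so $\mathrm{dist}(0,\cdot)\le\|\grad F_k(x^k)\|$; rearranging gives the first bound of \eqref{eq:theorm2-min-1}, while the second is immediate from $\|\mcA x^k-\prox_{\mu_k h}(\mcA x^k)\|\le\mu_k\ell_h$ in \eqref{eq:moreau-gradient-bound}.

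\textbf{Step 3 (surjective case).} When $\mcA$ is surjective, $\mcA\mcA^\dagger=I$, so $\mcA\hat x^k=\prox_{\mu_k h}(\mcA x^k)$ and $\|x^k-\hat x^k\|\le\|\mcA^\dagger\|\,\mu_k\ell_h\le\sigma_{\min}^{-1}(\mcA)(2\rho)^{-1}\ell_h k^{-1/3}$, which is the second line of \eqref{eq:theorem2-min-2}. For the first line, since $\nabla h_{\mu_k}(\mcA x^k)\in\partial h(\mcA\hat x^k)$, inserting $\pm\nabla f(x^k)$, using non-expansiveness of $\mcP_{T_{x^k}\mcM}$ and $\ell_{\nabla f}$-Lipschitzness of $\nabla f$ gives $\mathrm{dist}(0,\mcP_{T_{x^k}\mcM}(\nabla f(\hat x^k)+\mcA^*\partial h(\mcA\hat x^k)))\le c\,k^{-1/3}+\|\grad F_k(x^k)\|$ with $c=\ell_{\nabla f}\sigma_{\min}^{-1}(\mcA)(2\rho)^{-1}\ell_h$. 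Squaring via $(a+b)^2\le 2a^2+2b^2$, weighting the $k$-th term by $1/(\omega_1 k^{1/3})\le1/\ell_k$ and summing, the $k^{-2/3}$ part gives $2c^2\sum_{k=1}^K\frac{1}{\omega_1 k}\le\frac{2c^2}{\omega_1}\ln(3K)$ (harmonic bound, $e<3$), and the gradient part is $\le\frac{2}{\omega_1}\cdot\omega_1\sum_k\frac{\|\grad F_k(x^k)\|^2}{\ell_k}\le4\omega_2$; dividing by $\sum_{k=1}^K\frac{1}{\omega_1 k^{1/3}}\ge K^{2/3}/\omega_1$ and using $\frac{2c^2}{\omega_1}\le2\omega_3$ (since $c^2=2\rho\alpha^2\|\mcA\|^2\omega_3$ and $\omega_1\ge2\rho\alpha^2\|\mcA\|^2$) yields $\min_{1\le k\le K}\mathrm{dist}(\cdots)^2\le\frac{2\omega_1(\omega_3\ln(3K)+2\omega_2)}{K^{2/3}}$, which is even tighter than the stated bound.

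\textbf{Main obstacle.} The descent/telescoping is routine; the delicate point is the weighted summation in Step~3, which trades the non-summable tail $\sum_k k^{-2/3}$ against a harmonic sum to produce the $\ln(3K)$ factor. One must pick the weights $1/(\omega_1 k^{1/3})$ exactly so that the Moreau-error term and the gradient-energy term are \emph{simultaneously} controlled by the already-established budget $\omega_2$, and then verify that the leftover constant collapses to $\omega_3$ via the definitions. The only other subtlety is the limiting argument used to replace $F_{K+1}(x^{K+1})$ by $F^*$.
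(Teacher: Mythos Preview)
Your proposal is correct and follows essentially the same route as the paper's own proof: retraction-smooth descent for $F_k$, Lemma~\ref{lemma-1} to pass from $F_k$ to $F_{k+1}$, telescoping to obtain $\sum_k\frac{\gamma_k}{2}\|\grad F_k(x^k)\|^2\le\omega_2$, the bound $\ell_k\le\omega_1 k^{1/3}$, and in the surjective case the triangle-inequality split $\mathrm{dist}(\cdots)\le c\,k^{-1/3}+\|\grad F_k(x^k)\|$ followed by a weighted summation that converts $\sum k^{-2/3}$ into a harmonic sum. The only differences are that you use the sharper estimate $\sum_{k=1}^K k^{-1/3}\ge K^{2/3}$ (the paper uses $\ge\tfrac12 K^{2/3}$), you weight by $1/(\omega_1 k^{1/3})$ whereas the paper weights by $\gamma_k$ and invokes both $\gamma_k\ge\omega_1^{-1}k^{-1/3}$ and $\gamma_k\le(2\rho\alpha^2\|\mcA\|^2)^{-1}k^{-1/3}$, and you are more careful than the paper about the $\liminf$ argument for replacing $F_{K+1}(x^{K+1})$ by $F^*$; these yield slightly tighter constants but the structure is identical.
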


\begin{proof}
Since $\ell_k = 1/\gamma_k$ is the Lipschitz constant of $\grad F_k$, letting $\eta^k = -\gamma_k \grad F_k(x^k)$,  we have    
\begin{equation}\label{equ:in}
\begin{aligned}
    F_k(x^{k+1})  & \leq F_k(x^k) + \left<\grad F_k(x^k),  \eta^k  \right> + \frac{1}{2\gamma_k} \|\eta^k\|^2  \\
    & \leq F_k(x^k) - \frac{\gamma_k}{2} \|\grad F_k(x^k)\|^2,  ~~ \forall k\ge 1. 
\end{aligned}
    \end{equation}
  By Lemma \ref{lemma-1}, for all  $ x\in\mathcal{M}$, we have
    \begin{equation*}
      	\begin{aligned}
			F_{k+1}(x)  & \leq F_k(x) + \frac{1}{2} (\mu_k - \mu_{k+1}) \frac{\mu_k}{\mu_{k+1}} \| \nabla h_{\mu_k}(\mathcal{A}x)  \|^2 \\
			& \leq F_k(x) + (\mu_k - \mu_{k+1}) \ell_h^2, 
		\end{aligned}
    \end{equation*}
    where the second equality utilizes \eqref{eq:moreau-gradient-bound}.
   Set $x = x^{k+1}$ in the above inequality and add it to \eqref{equ:in}, we  obtain
    \begin{equation*}
        F_{k+1}(x^{k+1}) \leq F_k(x^k) - \frac{\gamma_k}{2} \| \grad F_k(x^k) \|^2 + (\mu_k - \mu_{k+1}) \ell_h^2.
    \end{equation*}
    By summing both sides of the above inequality  over $k = 1,2,\cdots,K $,  we get 
    \begin{equation}\label{equ:sum}
    \begin{aligned}
        \sum_{k=1}^{K} \frac{\gamma_k}{2} \| \grad F_k(x^k) \|^2  & \leq F_1(x^1) - F_{ K+1}(x^{ K+1}) +(\mu_1 - \mu_{ K+1}) \ell_h^2 \\
    & \leq F_1(x^1) - F^* + \mu_1 \ell_h^2. 
    \end{aligned}
        \end{equation}
By step 2 of Algorithm \ref{alg:full}, we have
        \begin{equation}\label{equ:gamak-right}
        \begin{aligned}
         \gamma_k=\frac{1}{\ell_k} &= \frac{\mu_k}{\mu_k (\alpha^2 \ell_{\nabla f} +2G\beta) + \alpha^2 \|\mathcal{A}\|^2}\\ & \geq   \frac{  (2\rho)^{-1}}{(2\rho)^{-1} (\alpha^2 \ell_{\nabla f} +2G\beta) + \alpha^2 \|\mathcal{A}\|^2} k^{-1/3}\\
         &= \omega_1^{-1} k^{-1/3}.
        \end{aligned}
        \end{equation}
      Combining the above inequality with \eqref{equ:sum}, we get
        \begin{equation}\label{equ:fina}
            \min_{1\leq k \leq K} \| \grad F_k(x^k)\|^2  \left(\frac{1}{2} \sum_{k=1}^{K} k^{-1/3} \right)\leq \omega_1\omega_2.
        \end{equation}
        Note that 
        \begin{equation*}
            \sum_{k=1}^K k^{-1/3} \geq \sum_{k=1}^K \int_k^{k+1} x^{-1 / 3} \mathrm{~d} x \geq \frac{1}{2} K^{2/3}.
        \end{equation*}
   It follows that
        \begin{equation*}
            \min_{1\leq k\leq K} \| \grad F_k(x^k)\|^2 \leq 4 \omega_1 \omega_2 \frac{1}{K^{2/3}}. \end{equation*}
 By combining this bound with Proposition \ref{propos-1}, and letting  $z^k : = \prox_{\mu_k h}(\mcA x^k)$, we obtain
   \begin{equation}\label{equ:fina2}
             \min_{1\leq k \leq K} \mathrm{dist}\left(0, \mcP_{T_{x^k}\mcM}(\nabla f(x^k) + \mcA^* \partial h(z^k))   \right) \leq  \min_{1\leq k\leq K} \| \grad F_k(x^k)\| \leq  \frac{2\sqrt{\omega_1\omega_2}}{K^{1/3}}.
        \end{equation}
It follows from Proposition \ref{propos-1} that %By Proposition \ref{propos-1},
%      $$\nabla 
% h_{\mu_k}(x^k)=\frac{1}{\mu_k}\left( x^k-\operatorname{prox}_{\mu_k h}( x^k)\right),$$ set $x^k  = \mathcal{A} x^k$ in the above inequality
$$\nabla 
h_{\mu_k}(\mathcal{A} x^k)=\frac{1}{\mu_k} \left(\mathcal{A} x^k-\operatorname{prox}_{\mu_k h}(\mathcal{A} x^k)\right),$$ Since $h$ is $\ell_h$-Lipschitz continuous, we have
  \begin{equation}\label{equ:fina22}
\|\mathcal{A} x^k-z^k\|= \| \mu_k \nabla 
h_{\mu_k}(\mathcal{A} x^k) \|  \leq   \mu_k \ell_h=\frac{(2 \rho)^{-1} \ell_h}{k^{1 / 3}}.
\end{equation}
%\blue{ The bounds \eqref{equ:fina2} and \eqref{equ:fina22} are not a perfect match with \eqref{equ:epsilon-station-smoothed-1}, since the subdifferentials of
%$f$ and $h \circ A$ are evaluated at different points. }
This implies that \eqref{eq:theorm2-min-1} holds.  If $\mcA$ is also surjective, the definition of $\hat{x}^k$ implies that
 \begin{equation}\label{eq:xstar2}
  x^k -\hat{x}^k= \mathcal{A}^{\dagger}(\mathcal{A}x^k - \prox_{\mu_k h}(\mathcal{A}x^k)).
\end{equation}
 Due to that the operator norm of $\mcA^{\dagger}$ is bounded by $\sigma_{\min}^{-1}(\mcA)$, we have
 \begin{equation}\label{equ:x-xstar}
     \begin{aligned}
     \|x^k -\hat{x}^k\| \leq  \sigma_{\min }^{-1}(\mathcal{A})\|\mathcal{A} x^k-z^k\|  \leq \frac{\sigma_{\min}^{-1}(\mcA) (2\rho)^{-1} \ell_h}{k^{1/3}}.
    \end{aligned}
 \end{equation} 
Furthermore, by Lemma  \ref{lemma-2}, we have
 that
     \begin{equation}\label{eq:composite2}
    \begin{aligned}
& \mathrm{dist} \left(0, \mcP_{T_{x^k}\mcM}(\nabla f(\hat{x}^k) + \mcA^* \partial h(\mcA \hat{x}^k))   \right)\\
%= &\|\mcP_{T_{x^k}\mcM}(\nabla f(\hat{x}^k)-\nabla f(x^k) + \nabla f(x^k) + \mcA^* \partial h(\mcA \hat{x}^k))   \| \\
 %\leq &\|\mcP_{T_{x^k}\mcM}(\nabla f(\hat{x}^k)-\nabla f(x^k) )   \| + \|\mcP_{T_{x^k}\mcM}(\nabla f(x^k) + \mcA^* \partial h(z^k))   \| \\
 \leq & \ell_{\nabla f}\|x^k-\hat{x}^k\| +  \mathrm{dist} \left(0,\mcP_{T_{x^k}\mcM}(\nabla f(x^k) + \mcA^* \partial h(z^k))   \right)\\
%& \leq L\|x-y\| + M (\kappa_f + L_g)  \|x-y\| + \epsilon \\
 \leq &   \frac{\ell_{\nabla f}\sigma_{\min}^{-1}(\mcA) (2\rho)^{-1} \ell_h}{k^{1/3}}+\| \grad F_k(x^k)\|.
\end{aligned}
\end{equation}
 Note that
       \begin{equation}\label{equ:gamak}
        \begin{aligned}
         \gamma_k = \frac{\mu_k}{\mu_k (\alpha^2 \ell_{\nabla f} +2G\beta) + \alpha^2 \|\mathcal{A}\|^2}  \leq   \frac{\mu_k}{\alpha^2 \|\mathcal{A}\|^2} = k^{-1/3}\frac{1}{2\rho \alpha^2 \|\mathcal{A}\|^2}.
        \end{aligned}
        \end{equation}
 This implies that
  \begin{equation}
  \begin{aligned}
     & \sum_{k=1}^K  \frac{\gamma_k}{4 } \mathrm{dist}^2\left(0,\mcP_{T_{x^k}\mcM}\left(\nabla f(\hat{x}^k) + \mcA^* \partial h(\mcA \hat{x}^k)\right)  \right) \\
      & \leq  \sum_{k=1}^K \frac{\gamma_k}{2}\frac{(\ell_{\nabla f}\sigma_{\min}^{-1}(\mcA) (2\rho)^{-1} \ell_h)^2}{k^{2/3}} +  \sum_{k=1}^K \frac{\gamma_k}{2} \| \grad F_k(x^k)\|^2\\
      & \leq \frac{\omega_3}{2} \sum_{k=1}^K \frac{1}{k}   +  \omega_2 \leq \frac{\omega_3}{2} \ln(3K)   + \omega_2,
  \end{aligned}
  \end{equation}
  where the first inequality is based on the fact that $(a+b)^2 \leq 2(a^2 +b^2)$ for any $a,b>0$, the second inequality utilize \eqref{equ:sum}, \eqref{equ:gamak} and \eqref{equ:x-xstar}. Similar to the above analysis, one can obtain that
  \begin{equation}
            \min_{1\leq k \leq K} \mathrm{dist}^2\left(0,\mcP_{T_{x^k}\mcM}\left(\nabla f(\hat{x}^k) + \mcA^* \partial h(\mcA \hat{x}^k)\right)  \right) \leq  8\omega_1(\frac{\omega_3}{2} \ln(3K)   + \omega_2) \frac{1}{K^{2/3}}.
        \end{equation}
    Combining with \eqref{equ:x-xstar} implies that \eqref{eq:theorem2-min-2} holds. The proof is completed. 
\end{proof}

% \begin{corollary}
% Show the complexity result about $\epsilon$.    
% \end{corollary}

% This theorem has a discrepancy between the two boundaries  \eqref{eq:theorm2-min-1} and \eqref{eq:theorem2-min-2}. The first bound  \eqref{eq:theorm2-min-1}  states that we will come across an iteration $k$ where the first-order optimality condition is satisfied to within a tolerance of $\epsilon$ during the first $K=\mathcal{O}(\epsilon^{-3})$ iterations. The second bound on $\|x^k-\hat{x}^k\|$, however, may not be particularly small because it might have been satisfied at an early iteration (i.e., $k \ll \epsilon^{-3}$). The algorithm used to correct this flaw is described in the following section.
 The theorem reveals a discrepancy between the two boundaries, specifically \eqref{eq:theorm2-min-1} and \eqref{eq:theorem2-min-2}. While the first bound, \eqref{eq:theorm2-min-1}, guarantees that an iteration $k$ will be reached where the first-order optimality condition is satisfied within a tolerance of $\epsilon$ within the first $k=\mathcal{O} (\epsilon^{-3})$ iterations, the second bound on $\|x^k-\hat{x}^k\|$ does not necessarily ensure a sufficiently small value, as it may have been satisfied in an early iteration ($k \ll \epsilon^{-3}$). To address this discrepancy, we introduce a dedicated algorithm in the subsequent subsection.

\subsection{Riemannian smoothing gradient method with epochs}

We describe a variant of Algorithm \ref{alg:full} in which the steps are organized into a series of epochs, each of which is twice as long as the one before. This variant is described in Algorithm \ref{alg:full-epoch}.   
We will show that,  there is some iteration $k=O(\epsilon^{-3})$ such that both $\|\mathcal{A} x^{k}-\operatorname{prox}_{\mu_{k} h}(\mathcal{A} x^{k})\|$ and $
\mathrm{dist}\left(0,\mcP_{T_{x^k}\mcM}(\nabla f(y^k) + \mcA^* \partial h(\mcA y^k))   \right) 
$
are smaller than the given tolerance $\epsilon$.
\begin{algorithm}
\caption{Riemannian smoothing gradient method with epochs for  \eqref{pro:p3}.} \label{alg:full-epoch}
  \textbf{Input:}{$x^1\in\mathcal{M}$ and tolerance  $\epsilon>0$; }\\
  \For{$l = 0,1,2,\cdots$}
  {
   set $S_l$ $\leftarrow$ $\infty$, and $k_l$ $\leftarrow$ $2^l$;\\
  \For{$k = 2^l,2^l+1,\cdots,2^{l+1}-1 $}{
 set $\mu_k = (2\rho)^{-1}k^{-1/3} $, let $\ell_k$ be as in \eqref{equ:L}, set $\gamma_k = 1/\ell_k$;\\
 $x^{k+1} = \mathcal{R}_{x^k}( - \gamma_k\grad  F_k(x^k))$;\\
\If{$\|\operatorname{grad} F_{k+1}(x^{k+1})\| \leq S_{l}$}{
 $S_l \leftarrow \|\operatorname{grad} F_{k+1}(x^{k+1})\| $,   $k_l \leftarrow k+1$;\\
\If{$S_{l} \leq \epsilon$ and $\|\mathcal{A} x^{k+1}-\operatorname{prox}_{\mu_{k+1} h}(\mathcal{A} x^{k+1})\|  \leq \epsilon $}{
break;\\
\textbf{end}}
\textbf{end}}
\textbf{end}}
\textbf{end}}
\end{algorithm}

\begin{proposition}
    \label{pro:epoch}
Suppose that Assumption \ref{assum} holds.  Consider using Algorithm \ref{alg:full-epoch} to  problem \eqref{pro:p3},  and $F^{*}$  is finite. Let $\omega_1,\omega_2,\omega_3$ be constants defined by Theorem \ref{theorem-1}. Then, for a given tolerance $\epsilon>0$ and  some $k=O(\epsilon^{-3})$, the iterate $x^{k}$  satisfies that
\begin{equation*}
   	\mathrm{dist}\left(0,\mathcal{P}_{T_{x^{k}} \mathcal{M}}(\nabla f(x^{k})+\mathcal{A}^{*} \partial h(\operatorname{prox}_{\mu_{k} h}(\mathcal{A} x^{k})))\right) \leq \epsilon \text { and }\|\mathcal{A} x^{k}-\operatorname{prox}_{\mu_{k} h}(\mathcal{A} x^{k})\| \leq \epsilon.
\end{equation*}
\end{proposition}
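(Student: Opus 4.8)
The plan is to combine the one-step descent estimate used in the proof of Theorem~\ref{theorem-1} with the doubling-epoch structure of Algorithm~\ref{alg:full-epoch}, in the spirit of the epoch-based variant in \cite{bohm2021variable}. Throughout, let $\{x^k\}$ denote the iterates produced by the recursion $x^{k+1}=\mcR_{x^k}(-\gamma_k\grad F_k(x^k))$ with $\mu_k=(2\rho)^{-1}k^{-1/3}$ and $\gamma_k=1/\ell_k$; these are exactly the iterates of Algorithm~\ref{alg:full-epoch} up to the stopping time, the sole purpose of the stopping rule being to extract a good iterate from this sequence. Since $F^*$ is finite, inequality \eqref{equ:sum} from the proof of Theorem~\ref{theorem-1} supplies the global bound $\sum_{k=1}^{K}\tfrac{\gamma_k}{2}\|\grad F_k(x^k)\|^2\le F_1(x^1)-F^*+\mu_1\ell_h^2=\omega_2$ valid for every $K$.

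First I would establish a per-epoch decay rate for the quantity $S_l$ that the algorithm records. Fix an epoch index $l\ge 1$ and observe that at the end of (a full pass through) epoch $l$ one has $S_l=\min_{2^l\le k\le 2^{l+1}-1}\|\grad F_{k+1}(x^{k+1})\|=\min_{2^l+1\le j\le 2^{l+1}}\|\grad F_j(x^j)\|$, hence in particular $S_l\le\min_{2^l+1\le j\le 2^{l+1}-1}\|\grad F_j(x^j)\|$. Restricting the global bound above to the $2^l-1$ indices $j\in\{2^l+1,\dots,2^{l+1}-1\}$, using the lower bound $\gamma_j\ge\omega_1^{-1}j^{-1/3}\ge\omega_1^{-1}(2^{l+1})^{-1/3}$ from \eqref{equ:gamak-right}, and bounding the sum below by the number of terms times the smallest summand, I obtain $S_l^2\le 2\omega_1\omega_2(2^{l+1})^{1/3}/(2^l-1)$, so that $S_l=\mathcal{O}(2^{-l/3})$. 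Simultaneously, every iterate $x^k$ generated during epoch $l$ has $k\ge 2^l$, so Proposition~\ref{propos-1} gives $\|\mcA x^k-\prox_{\mu_k h}(\mcA x^k)\|\le\mu_k\ell_h=(2\rho)^{-1}\ell_h\,k^{-1/3}\le(2\rho)^{-1}\ell_h\,2^{-l/3}=\mathcal{O}(2^{-l/3})$, uniformly over the epoch.

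Next I would convert these estimates into the stated complexity. Both right-hand sides have the form $C\,2^{-l/3}$ with $C$ depending only on $\omega_1,\omega_2,\rho,\ell_h$; let $l^*$ be the least integer with $C\,2^{-l^*/3}\le\epsilon$, so $l^*=\mathcal{O}(\log(1/\epsilon))$ and $2^{l^*}=\mathcal{O}(\epsilon^{-3})$. Suppose, for contradiction, that the algorithm has not stopped by the end of epoch $l^*$. Then it processed all of epoch $l^*$, so $S_{l^*}$ equals the full-epoch minimum, which by the first estimate is $\le C\,2^{-l^*/3}\le\epsilon$; in particular the running minimum reaches a value $\le\epsilon$ at some update event, occurring at an inner index $k+1$ where by the second (uniform) estimate the prox residual at $x^{k+1}$ is also $\le\epsilon$, so the break would have been triggered there --- a contradiction. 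Hence the algorithm stops in some epoch $l\le l^*$ at the recorded index $k:=k_l\le 2^{l+1}\le 2^{l^*+1}=\mathcal{O}(\epsilon^{-3})$, and the total number of iterations performed is likewise $\mathcal{O}(\epsilon^{-3})$. At this iterate, by construction, $\|\grad F_k(x^k)\|=S_l\le\epsilon$ and $\|\mcA x^k-\prox_{\mu_k h}(\mcA x^k)\|\le\epsilon$; the latter is precisely the second asserted inequality. For the first, write $\grad F_k(x^k)=\mcP_{T_{x^k}\mcM}(\nabla f(x^k)+\mcA^*\nabla h_{\mu_k}(\mcA x^k))$, invoke the inclusion $\nabla h_{\mu_k}(\mcA x^k)\in\partial h(\prox_{\mu_k h}(\mcA x^k))$ from Proposition~\ref{propos-1}, and conclude $\mathrm{dist}(0,\mcP_{T_{x^k}\mcM}(\nabla f(x^k)+\mcA^*\partial h(\prox_{\mu_k h}(\mcA x^k))))\le\|\grad F_k(x^k)\|\le\epsilon$.

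The main obstacle is bookkeeping rather than estimation. I must carefully align the quantity the algorithm actually tracks, $\|\grad F_{k+1}(x^{k+1})\|$, with the quantity controlled by the descent inequality, $\|\grad F_k(x^k)\|$, and handle the epoch endpoints correctly --- dropping the term $k=2^l$ so that the minimization is taken over exactly the indices eligible to update $S_l$. I also have to verify that the two-part break condition is genuinely met within epoch $l^*$: it is not enough that $S_{l^*}\le\epsilon$; one needs the prox residual below $\epsilon$ throughout the epoch rather than only at the minimizing index, and this is exactly what the uniform estimate in the second step provides. (The constant $\omega_3$ of Theorem~\ref{theorem-1} plays no role here and is listed only for uniformity with that theorem's constants, or for an $\hat x^k$-type variant paralleling \eqref{eq:theorem2-min-2}.)
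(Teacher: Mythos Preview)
Your proposal is correct and follows essentially the same approach as the paper: bound the per-epoch minimum of $\|\grad F_k(x^k)\|$ via the telescoping descent inequality restricted to the epoch (the paper uses an integral comparison for $\sum k^{-1/3}$ where you use a cruder count-times-minimum bound, but both yield the $\mathcal{O}(2^{-l/3})$ rate), bound the prox residual uniformly over the epoch via $\mu_k\ell_h$, and then read off the first epoch at which both fall below $\epsilon$. You are in fact more careful than the paper about the index shift (the algorithm tests $\|\grad F_{k+1}(x^{k+1})\|$, not $\|\grad F_k(x^k)\|$) and about why the two-part break condition is actually triggered inside the critical epoch; the paper's proof glosses over these bookkeeping points.
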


\begin{proof}
    As in \eqref{equ:sum}, by using monotonicity of $\left\{\grad F_{k}(x_{k})\right\}$ and discarding nonnegative terms, we have that
	\begin{equation}\label{equ:theo3:sum-grad}
	  	\sum_{k=2^{l}}^{2^{l+1}-1} \frac{\gamma_{k}}{2}\|\operatorname{grad} F_{k}(x^{k})\|^{2} \leq \omega_2.
	\end{equation}
	With the same arguments as in the earlier proof, we obtain
	$$
	\begin{aligned}
		\sum_{k=2^{l}}^{2^{l+1}-1} k^{-1 / 3} & \geq \sum_{k=2^{l}}^{2^{l+1}-1} \int_{k}^{k+1} x^{-1 / 3} \mathrm{~d} x=\int_{2^{l}}^{2^{l+1}} x^{-1 / 3} \mathrm{~d} x \\
		&=\frac{3}{2}\left((2^{l+1})^{2 / 3}-(2^{l})^{2 / 3}\right)=\frac{3}{2}\left(2^{2 / 3}-1\right)(2^{l})^{2 / 3} \geq \frac{1}{2}(2^{l})^{2 / 3}.
	\end{aligned}
	$$
	Substituting this inequality  and \eqref{equ:gamak-right} into \eqref{equ:theo3:sum-grad} yields
	$$
	\min _{2^{l} \leq k \leq 2^{l+1}-1}\|\operatorname{grad} F_{k}(x^{k})\| \leq \frac{2\sqrt{\omega_1\omega_2}}{(2^{l})^{1 / 3}}.
	$$
 Let $z^{k}:=$ $\operatorname{prox}_{\mu_{k} h}(\mathcal{A} x^{k})$, we have as in \eqref{equ:fina2}  that
	\begin{equation}\label{eq:epoch-1}
	    \min _{2^{l} \leq k \leq 2^{l+1}-1} \|\mathcal{P}_{T_{x^{k}} \mathcal{M}}(\nabla f(x^{k})+\mathcal{A}^{*} \partial h(z^{k}))\| \leq \frac{2\sqrt{\omega_1\omega_2}}{(2^{l})^{1 / 3}}.
	\end{equation}
Furthermore, for all  $2^{l} \leq k \leq 2^{l+1}-1$ we have 
	\begin{equation}\label{eq:epoch-2}
	    	\|\mathcal{A} x^{k}-z^{k}\| \leq  \mu_{k} \ell_{h}\leq \frac{(2 \rho)^{-1} \ell_{h}}{k^{1 / 3}} \leq \frac{(2 \rho)^{-1} \ell_{h}}{(2^{l})^{1 / 3}}.
	\end{equation}
By \eqref{eq:epoch-1} and \eqref{eq:epoch-2} we deduce that Algorithm \ref{alg:full-epoch} must terminate before the end of epoch $l$, i.e., before $2^{l+1}$ iterations, where $l$ is the smallest nonnegative integer such that
	$$
	2^{l} \geq \max \left\{{8\sqrt{\omega_1^{3}\omega_2^{3}}},(2 \rho)^{-3} \ell_{h}^{3}\right\} \epsilon^{-3}.
	$$
	Thus, termination occurs  at most $ 2\max \left\{8\sqrt{\omega_1^{3}\omega_2^{3}},(2 \rho)^{-3} \ell_{h}^{3}\right\} \epsilon^{-3}$ iterations.
\end{proof} 
	For the case of that $\mcA$ is surjective, we have the following stronger result.

\begin{theorem}\label{thm:full:epoch}
    Suppose that Assumption \ref{assum} holds.  Let $\omega_1,\omega_2,\omega_3$ be constants defined by Theorem \ref{theorem-1}. Assume that $\mcA$ is also surjective, and that the condition $\|\mathcal{A} x^{k+1}-\operatorname{prox}_{\mu_{k+1} h}(\mathcal{A} x^{k+1})\| \leq \epsilon$ in Algorithm \ref{alg:full-epoch} is replaced by $\|x^{k+1}-\hat{x}^{k+1}\| \leq \epsilon$, where $\hat{x}^{k+1} = x^{k+1} - \mathcal{A}^{\dagger}(\mathcal{A}x^{k+1} - \prox_{\mu_{k+1} h}(\mathcal{A}x^{k+1}))$. Then, for some $k^{\prime}=O(\epsilon^{-3})$, we have that
	\begin{equation*}
	    \mathrm{dist}\left(0,\mathcal{P}_{T_{x^{k}} \mathcal{M}}\left(\nabla f(x_{k^{\prime}}^{*})+\mathcal{A}^{*} \partial h(\mathcal{A} \hat{x}^{k^{\prime}})\right)\right) \leq \epsilon,~~\left\|x^{k^{\prime}}-\hat{x}^{k^{\prime}}\right\| \leq \epsilon.
	\end{equation*}

\end{theorem}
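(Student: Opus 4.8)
The plan is to mimic the epoch-based argument of Proposition \ref{pro:epoch}, but track the quantity $\mathrm{dist}\left(0,\mathcal{P}_{T_{x^{k}}\mathcal{M}}(\nabla f(\hat{x}^{k})+\mathcal{A}^{*}\partial h(\mathcal{A}\hat{x}^{k}))\right)$ together with $\|x^{k}-\hat{x}^{k}\|$ instead of the corresponding ``proximal'' quantities. Within a single epoch $l$, i.e. for $2^{l}\le k\le 2^{l+1}-1$, the descent inequality \eqref{equ:in} combined with the Moreau-envelope comparison (Lemma \ref{lemma-1}) telescopes to give, exactly as in \eqref{equ:theo3:sum-grad}, the bound $\sum_{k=2^{l}}^{2^{l+1}-1}\frac{\gamma_{k}}{2}\|\operatorname{grad}F_{k}(x^{k})\|^{2}\le\omega_2$. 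The new ingredient is the composite estimate \eqref{eq:composite2} from the proof of Theorem \ref{theorem-1}, which bounds the target distance by $\ell_{\nabla f}\sigma_{\min}^{-1}(\mathcal{A})(2\rho)^{-1}\ell_{h}k^{-1/3}+\|\operatorname{grad}F_{k}(x^{k})\|$; using $(a+b)^{2}\le 2(a^{2}+b^{2})$, multiplying by $\gamma_{k}/4$, and summing over the epoch with the two-sided bounds \eqref{equ:gamak-right} and \eqref{equ:gamak} on $\gamma_k$, one gets
\begin{equation*}
\sum_{k=2^{l}}^{2^{l+1}-1}\frac{\gamma_{k}}{4}\,\mathrm{dist}^{2}\!\left(0,\mathcal{P}_{T_{x^{k}}\mathcal{M}}(\nabla f(\hat{x}^{k})+\mathcal{A}^{*}\partial h(\mathcal{A}\hat{x}^{k}))\right)\le\frac{\omega_3}{2}\sum_{k=2^{l}}^{2^{l+1}-1}\frac{1}{k}+\omega_2.
\end{equation*}

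Next I would handle the harmonic sum over an epoch: $\sum_{k=2^{l}}^{2^{l+1}-1}k^{-1}\le\int_{2^{l}-1}^{2^{l+1}-1}x^{-1}\,dx=\ln\!\frac{2^{l+1}-1}{2^{l}-1}$, which is bounded by an absolute constant (e.g.\ $\ln 3$ for $l\ge 1$, and trivially for $l=0$). Hence the right-hand side is at most $\frac{\omega_3}{2}\ln 3+\omega_2=:C$, a constant independent of $l$. Combining with the lower bound $\sum_{k=2^{l}}^{2^{l+1}-1}\gamma_{k}\ge\omega_1^{-1}\sum_{k=2^{l}}^{2^{l+1}-1}k^{-1/3}\ge\frac{1}{2}\omega_1^{-1}(2^{l})^{2/3}$ (the integral estimate already used in Proposition \ref{pro:epoch}) yields
\begin{equation*}
\min_{2^{l}\le k\le 2^{l+1}-1}\mathrm{dist}^{2}\!\left(0,\mathcal{P}_{T_{x^{k}}\mathcal{M}}(\nabla f(\hat{x}^{k})+\mathcal{A}^{*}\partial h(\mathcal{A}\hat{x}^{k}))\right)\le\frac{8\omega_1 C}{(2^{l})^{2/3}}.
\end{equation*}
In parallel, \eqref{equ:x-xstar} gives $\|x^{k}-\hat{x}^{k}\|\le\sigma_{\min}^{-1}(\mathcal{A})(2\rho)^{-1}\ell_{h}k^{-1/3}\le\sigma_{\min}^{-1}(\mathcal{A})(2\rho)^{-1}\ell_{h}(2^{l})^{-1/3}$ for every $k$ in the epoch, so the same index that makes the distance small also makes $\|x^{k}-\hat{x}^{k}\|$ small.

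Finally I would close the argument exactly as in Proposition \ref{pro:epoch}: since the modified stopping test in Algorithm \ref{alg:full-epoch} checks precisely $\|\operatorname{grad}F_{k+1}(x^{k+1})\|\le\epsilon$ (which by \eqref{eq:composite2} controls the distance up to the additive $O(k^{-1/3})$ term) and $\|x^{k+1}-\hat{x}^{k+1}\|\le\epsilon$, the two displayed bounds force termination no later than the end of epoch $l^{\star}$, where $l^{\star}$ is the smallest integer with $2^{l^{\star}}\ge\max\{(8\omega_1 C)^{3/2},\,(\ell_{\nabla f}\sigma_{\min}^{-1}(\mathcal{A})(2\rho)^{-1}\ell_{h})^{3},\,(\sigma_{\min}^{-1}(\mathcal{A})(2\rho)^{-1}\ell_{h})^{3}\}\epsilon^{-3}$; hence termination occurs within $k^{\prime}\le 2^{l^{\star}+1}=O(\epsilon^{-3})$ iterations, and at that index both inequalities of the theorem hold. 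The main obstacle is bookkeeping rather than conceptual: one must be careful that the stopping criterion is stated in terms of $\|\operatorname{grad}F_{k+1}(x^{k+1})\|$ while the conclusion is about $\mathrm{dist}(0,\mathcal{P}_{T_{x^{k}}\mathcal{M}}(\cdots))$, so the additive $\ell_{\nabla f}\sigma_{\min}^{-1}(\mathcal{A})(2\rho)^{-1}\ell_{h}k^{-1/3}$ coupling term from \eqref{eq:composite2} must be absorbed into the $O(\epsilon^{-3})$ threshold (possibly at the cost of replacing $\epsilon$ by $\epsilon/2$ in the two sub-tests, or equivalently inflating the constant), and one must verify that the monotone-minimum tracking of $S_{l}$ in the algorithm indeed lands on an index where \emph{both} the distance bound \eqref{eq:composite2} and the $\|x^{k}-\hat{x}^{k}\|$ bound are simultaneously active — which they are, since the $\|x^{k}-\hat{x}^{k}\|$ bound holds uniformly over the epoch.
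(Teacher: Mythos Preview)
Your proposal is correct and follows essentially the same route as the paper: both use the per-epoch descent bound \eqref{equ:theo3:sum-grad}, the composite estimate \eqref{eq:composite2}, a constant bound on the harmonic sum $\sum_{k=2^{l}}^{2^{l+1}-1}k^{-1}$ over one epoch (the paper uses $\ln 4$ where you use $\ln 3$), and the lower bound $\sum_{k=2^{l}}^{2^{l+1}-1}k^{-1/3}\ge\frac{1}{2}(2^{l})^{2/3}$ to extract the $(2^{l})^{-1/3}$ rate for the minimum. Your closing discussion about reconciling the stopping test $\|\operatorname{grad}F_{k+1}(x^{k+1})\|\le\epsilon$ with the stated conclusion is more careful than the paper's, which simply invokes ``the considerations made in the previous proof''; the extra threshold term $(\ell_{\nabla f}\sigma_{\min}^{-1}(\mathcal{A})(2\rho)^{-1}\ell_{h})^{3}$ you include is not needed since it is already dominated by $\omega_3$ inside $C$, but it does no harm.
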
 
	
\begin{proof}
	If $\mcA$ is surjective, the operator norm of $\mcA^{\dagger}$ is bounded by $\sigma_{\min}^{-1}(\mcA)$. Therefore, for all  $2^{l} \leq k \leq 2^{l+1}-1$, the definition of $\hat{x}^k$ implies that
 \begin{equation}\label{equ:x-xstar1}
     \begin{aligned}
     \|x^k -\hat{x}^k\| \leq  \sigma_{\min }^{-1}(\mathcal{A})\|\mathcal{A} x^k-z^k\|  \leq \frac{\sigma_{\min}^{-1}(\mcA) (2\rho)^{-1} \ell_h}{(2^l)^{1/3}}.
    \end{aligned}
 \end{equation} 
	Note that 
	\begin{equation}\label{equ:k-inequality}
    \begin{aligned}
   \sum_{k=2^l}^{2^{l+1}-1} k^{-1} & \leq  \sum_{k=2^l}^{2^{l+1}-1} \int_{k-1}^{k}  x^{-1} dx  = \int_{2^l-1}^{2^{l+1}-1} x^{-1} dx \leq \ln(2^{l+1}) - \ln(2^{l-1}) =  \ln(4).
\end{aligned}
\end{equation}
	Combining \eqref{eq:composite2},\eqref{equ:theo3:sum-grad} and \eqref{equ:k-inequality}, we have 
  \begin{equation}
  \begin{aligned}
     & \sum_{k=2^l}^{2^{l+1}-1}  \frac{\gamma_k}{4 } \mathrm{dist}^2\left(0,\mcP_{T_{x^k}\mcM}\left(\nabla f(\hat{x}^k) + \mcA^* \partial h(\mcA \hat{x}^k)\right)   \right) \\
      & \leq  \sum_{k=2^l}^{2^{l+1}-1} \frac{\gamma_k}{2}\frac{(\ell_{\nabla f}\sigma_{\min}^{-1}(\mcA) (2\rho)^{-1} \ell_h)^2}{k^{2/3}} +  \sum_{k=2^l}^{2^{l+1}-1} \frac{\gamma_k}{2} \| \grad F_k(x^k)\|^2\\
      & \leq \frac{\omega_3}{2} \sum_{k=2^l}^{2^{l+1}-1} \frac{1}{k}   +  \omega_2 \leq \frac{\omega_3}{2} \ln(4)   + \omega_2\leq \omega_2 + \omega_3.\\
  \end{aligned}
  \end{equation}
   Similar to the above analysis, one can obtain that
  \begin{equation}
            \min_{2^l\leq k \leq 2^{l+1}-1} \mathrm{dist}\left(0,\mcP_{T_{x^k}\mcM}\left(\nabla f(\hat{x}^k) + \mcA^* \partial h(\mcA \hat{x}^k)\right)   \right) \leq  \frac{2\sqrt{2}\sqrt{\omega_1(\omega_2+\omega_3)}}{(2^l)^{1/3}}.
        \end{equation}
 With the considerations made in the previous proof, we can choose $l$ to be the smallest positive integer such that
	$$
	2^{l+1} \geq 2 \max \left\{8\sqrt{\omega_1^{3}(\omega_2+\omega_3)^{3}}, \sigma_{\min }^{-3}(\mathcal{A}) \ell_{h}^{3}(2 \rho)^{-3}\right\} \epsilon^{-3}.
	$$
	The claim then holds for some $k^{\prime} \leq 2^{l+1}$.
\end{proof}

	%Although Algorithm \ref{alg:full-epoch} seems more complicated than Algorithm \ref{alg:ssn}, the steps are the same. The only difference is that for the second algorithm, we do not search for the iterate that minimizes criticality across all iterations but only across at most the last $k / 2$ iterations, where $k$ is the total number of iterations.

\subsection{Riemannian smoothing stochastic gradient method}
Now we turn to analyze the Riemannian smoothing stochastic gradient method. In particular, we consider the following stochastic optimization problem on  manifolds:
\begin{equation}\label{eq:rsg-22}
    \min_{x\in\mcM} F(x):= \mbE_{\xi\in\mcD}[f(x,\xi)] + h(\mcA x).
\end{equation}
Here, we assume that the function $h$ is $\rho$-weakly convex and $\ell_h$-Lipschitz continuous, $f(x): = \mbE_{\xi\in\mcD}[f(x,\xi)]$ is $\ell_{\nabla f}$-Lipschitz continuous. In the $k$-th iteration, we define the smoothed approximation $F_k(x,\xi): = f(x,\xi) + h_{\mu_k}(\mcA x)$ and 
\begin{equation}
F_k(x ): = \mbE_{\xi\in\mcD}[ F_k(x,\xi)] =   \mbE_{\xi\in\mcD}[f(x,\xi)]  + h_{\mu_k}(\mcA x).
\end{equation}
 Proposition \ref{propos-1} implies that $F_k$ is continuous differentiable with gradient
$$
  \begin{aligned}
    \nabla F_k(x) &= \mbE_{\xi\in\mcD}[\nabla f(x,\xi)] + \frac{1}{\mu_k} \mcA^*(\mcA x - \prox_{\mu_k h}(\mcA x)).
     \end{aligned}
$$
The Riemannian  gradient is as follows:
\begin{equation}\label{eq:riestochas}
    \grad F_k(x) = \mcP_{T_x\mcM}(\nabla F_k(x )).
\end{equation}
By Lemma \ref{Euclidean vs manifold},  when $\rho \mu_k \leq 1/2$, $F_k$ satisfies \eqref{equ:L2} with constant $\ell_k$ given by
\begin{equation}\label{equ:L22}
    \ell_k = \alpha^2 \ell_{\nabla f}  + \alpha^2\|\mcA\|^2\mu_k^{-1} +2 G \beta.
\end{equation}
% Let $G(x,\xi)$ denote the stochastic gradient of $f(x,\xi)$ respect to $x$,

Let $\nabla F_k(x,\xi)$ and $\grad F_k(x,\xi)$  denote the Euclidean stochastic gradient and Riemannian stochastic gradient of $F_k(x)$, respectively, i.e.,
$$
\begin{aligned}
    \nabla F_k(x,\xi): &= \nabla f(x,\xi) + \frac{1}{\mu_k} \mcA^*(\mcA x - \prox_{\mu_k h}(\mcA x)), \\
    \grad F_k(x,\xi) : &= \mathcal{P}_{T_x\mathcal{M}}\left(  \nabla F_k(x,\xi) \right). 
    \end{aligned}
$$
The following assumptions are made for the stochastic gradient $\nabla f(x,\xi)$:
\begin{assumption}\label{assum:stochatis}
We make the following assumptions:
\begin{itemize}
    \item %{\color{red} The algorithm can generate independent and identically distributed samples according to the distribution $D$.} 
    The samples used in the proposed algorithms are independent and identically distributed with distribution $\mathcal{D}$. 
    \item For any $x$, the algorithm generates a sample $\xi\sim \mathcal{D}$ and returns a stochastic gradient $\nabla f(x,\xi)$, there exists a parameter $\sigma>0$ such that
\begin{align}
  &  \mathbb{E}_{\xi}\left[ \nabla f(x,\xi) \right] = \nabla f(x), \label{assm:mean} \\ 
 &    \mathbb{E}_{\xi}\left[ \|\nabla f(x,\xi) - \nabla f(x) \|^2\right] \leq \sigma^2. \label{assm:vari}
\end{align}
\end{itemize}
\end{assumption}
Under this assumption,  for any $k>1$ we can conclude that
\begin{equation}\label{eq:gradient}
\begin{aligned}
     \mathbb{E}_{\xi}\left[\grad F_k(x, \xi) \right] & =\mcP_{T_x\mcM}\left(  \nabla f(x) + \frac{1}{\mu_k} \mcA^*(\mcA x - \prox_{\mu_k h}(\mcA x)) \right) = \grad F_k(x),
     \end{aligned}
\end{equation}
\begin{equation}\label{eq:variance}
\begin{aligned}
 \mathbb{E}_{\xi}\left[\|\grad F_k(x, \xi )-\grad F_k(x)\|^{2}\right] & = \mathbb{E}_{\xi}\left[\|
     \mcP_{T_x\mcM}\left( \nabla F_k(x, \xi )-\nabla F_k(x) \right)\|^{2}\right] \\ 
&  \leq \mathbb{E}_{\xi}\left[\| \nabla F_k(x, \xi )-\nabla F_k(x) \|^{2}\right] \\ 
%& =\mathbb{E}_{\xi}\left[\left\| ( G(x, \xi) + \frac{1}{\mu_k} \mcA^*(\mcA x - \prox_{\mu_k h}(\mcA x)))-( \nabla f(x) + \frac{1}{\mu_k} \mcA^*(\mcA x - \prox_{\mu_k h}(\mcA x))) \right\|^{2}\right] \\ 
&     =  \mathbb{E}_{\xi}\left[\|\nabla f(x, \xi)-\nabla f(x)\|^{2}\right] \leq \sigma^{2}.
\end{aligned}
\end{equation}
At iteration $k$, the Riemannian smoothing stochastic gradient algorithm generates a sample $\xi_k\sim \mathcal{D}$ that is independent of $\{\xi_0,\cdots, \xi_{k-1}\}$, and  returns a Riemannian stochastic gradient $\grad F_k(x^k,\xi_k)$.  Then it generates the next iterate $x^{k+1}$ via
\begin{equation*}
    x^{k+1} = \mcR_{x^k}(-\gamma_k \grad F_k(x^k,\xi_k)).
\end{equation*}
The detail of the Riemannian smoothing stochastic gradient algorithm is presented in Algorithm \ref{alg:stochastic}.

% for some parameter $\sigma \geq 0$. Observe that, by\eqref{eq:gradient} $, G(x^{k}, \xi_{k})$ is an unbiased estimator of $\nabla f(x^{k})$ and, by \eqref{eq:variance}, the variance of the random variable $\|G(x^{k}, \xi_{k})-\nabla f(x^{k})\|$ is bounded. It is worth noting that in the standard setting for stochastic programming, the random vectors $\xi_{k}, k=1,2, \ldots$, are independent of each other (and also of $x^{k}$ ). Our assumption here is slightly weaker since we do not need to assume $\xi_{k}, k=1,2, \ldots$, to be independent.

\begin{algorithm}
\caption{Riemannian smoothing stochastic gradient method for  \eqref{eq:rsg-22}.} \label{alg:stochastic}
 \textbf{Input:}{ Initial point $x^{1}$, iteration limit $K$, stepsizes $\left\{\gamma_{k}\right\}_{k \geq 1}$. }\\
  \For{$k=1, \ldots, K$}
  {
   Sample $\xi_k \sim  \mathcal{D}$, let $\mu_k = (2\rho)^{-1}k^{-1/5} $ and   $\ell_k$ be as in  \eqref{equ:L22}.\\
   Compute the Riemannian stochastic gradient via \eqref{eq:riestochas}.\\
   Update $x^{k+1}$ via
\begin{equation}\label{eq:rsg}
x^{k+1} = \mathcal{R}_{x^k}( - \gamma_k\grad  F_k(x^{k},\xi_k )).
\end{equation}
\textbf{end}}
Sample $R\in\{1,\cdots,K\}$ according to $\mathrm{Prob}(R = k) = \frac{2 \gamma_{k}-\ell_{k} \gamma_{k}^{2}}{\sum_{k=1}^{N}\left(2 \gamma_{k}-\ell_{k} \gamma_{k}^{2}\right)}$. \\
\textbf{Output: }{ $x^{R}$.}
\end{algorithm}

The following result gives some convergence properties of Algorithm \ref{alg:stochastic}.

\begin{theorem}\label{thm:ssumptionA1}
Suppose that Assumption \ref{assum} and \ref{assum:stochatis} hold,, and the stepsizes $\left\{\gamma_{k}\right\}$ are chosen such that $\gamma_{k}<2 / \ell_{k}$. Let $x^R$ be the point returned by Algorithm \ref{alg:stochastic}. Then 
  we have
\begin{equation}\label{theorem2}
   \mathbb{E}\left[\|\grad
   F_R(x^{R})\|^{2}\right] \leq \frac{2 \omega_2 +\sigma^{2} \sum_{k=1}^{K} \ell_k\gamma_{k}^2}{\sum_{k=1}^{K}\left(2 \gamma_{k}-\ell_{k} \gamma_{k}^{2}\right)},
\end{equation}
where $\omega_2$ is defined in Theorem \ref{theorem-1}, and the expectation is taken over all random choices.
\end{theorem}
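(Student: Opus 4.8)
The plan is to follow the classical randomized nonconvex stochastic gradient analysis (Ghadimi--Lan style), with the variable-smoothing bookkeeping from the proof of Theorem \ref{theorem-1} grafted on. First I would fix an iteration $k$, set $\eta^k = -\gamma_k\grad F_k(x^k,\xi_k)$ and $x^{k+1} = \mathcal{R}_{x^k}(\eta^k)$, and apply the retraction-smoothness inequality \eqref{equ:L2} with the constant $\ell_k$ from \eqref{equ:L22} to obtain
\begin{equation*}
F_k(x^{k+1}) \le F_k(x^k) - \gamma_k\langle\grad F_k(x^k,\xi_k),\grad F_k(x^k)\rangle + \frac{\ell_k\gamma_k^2}{2}\|\grad F_k(x^k,\xi_k)\|^2.
\end{equation*}
Taking the conditional expectation with respect to $\xi_k$ and invoking the unbiasedness \eqref{eq:gradient} and the variance bound \eqref{eq:variance}, the inner-product term collapses to $-\gamma_k\|\grad F_k(x^k)\|^2$ and, by the bias--variance decomposition, $\mathbb{E}_{\xi_k}[\|\grad F_k(x^k,\xi_k)\|^2]\le \|\grad F_k(x^k)\|^2+\sigma^2$, which yields
\begin{equation*}
\mathbb{E}_{\xi_k}\left[F_k(x^{k+1})\right] \le F_k(x^k) - \left(\gamma_k - \frac{\ell_k\gamma_k^2}{2}\right)\|\grad F_k(x^k)\|^2 + \frac{\ell_k\gamma_k^2}{2}\sigma^2.
\end{equation*}

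Next I would convert $F_k$ to $F_{k+1}$ exactly as in the proof of Theorem \ref{theorem-1}: Lemma \ref{lemma-1} together with the gradient bound \eqref{eq:moreau-gradient-bound} gives $F_{k+1}(x^{k+1}) \le F_k(x^{k+1}) + (\mu_k-\mu_{k+1})\ell_h^2$. Adding this, passing to full expectations, and summing over $k=1,\dots,K$, the telescoping of the $(\mu_k-\mu_{k+1})\ell_h^2$ terms leaves at most $\mu_1\ell_h^2 = (2\rho)^{-1}\ell_h^2$, while the uniform lower boundedness of $\{F_k\}$ (noted after \eqref{Fstar}) lets me replace $\mathbb{E}[F_{K+1}(x^{K+1})]$ by $F^*$ up to the vanishing $\mu_{K+1}\ell_h^2$ that is reabsorbed as in Theorem \ref{theorem-1}. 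Altogether,
\begin{equation*}
\sum_{k=1}^{K}\left(\gamma_k - \frac{\ell_k\gamma_k^2}{2}\right)\mathbb{E}\left[\|\grad F_k(x^k)\|^2\right] \le F_1(x^1) - F^* + (2\rho)^{-1}\ell_h^2 + \frac{\sigma^2}{2}\sum_{k=1}^{K}\ell_k\gamma_k^2 = \omega_2 + \frac{\sigma^2}{2}\sum_{k=1}^{K}\ell_k\gamma_k^2.
\end{equation*}

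Finally, since $\gamma_k < 2/\ell_k$ we have $2\gamma_k-\ell_k\gamma_k^2 > 0$, so the rule defining $R$ is a genuine probability distribution; because its weights are deterministic, $R$ is independent of the trajectory, and therefore $\mathbb{E}[\|\grad F_R(x^R)\|^2] = \left(\sum_{k=1}^K(2\gamma_k-\ell_k\gamma_k^2)\right)^{-1}\sum_{k=1}^K(2\gamma_k-\ell_k\gamma_k^2)\,\mathbb{E}[\|\grad F_k(x^k)\|^2]$. Using the identity $2(\gamma_k-\ell_k\gamma_k^2/2)=2\gamma_k-\ell_k\gamma_k^2$ together with the displayed bound above, multiplying by $2$ and dividing by $\sum_{k=1}^K(2\gamma_k-\ell_k\gamma_k^2)$ gives exactly \eqref{theorem2}. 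I expect the only genuinely delicate step to be the middle one: keeping the smoothing-parameter bookkeeping consistent — in particular that the $\ell_k$ appearing in the stepsize constraint $\gamma_k<2/\ell_k$, in the weights $2\gamma_k-\ell_k\gamma_k^2$, and in the retraction-smoothness bound is the same one, \eqref{equ:L22}, and that $\mathbb{E}[F_{K+1}(x^{K+1})]\ge F^*$ up to the negligible $\mu_{K+1}\ell_h^2$ correction, so that the telescoped descent sum is legitimately bounded by $\omega_2$. The remaining manipulations are routine and parallel \cite{bohm2021variable}.
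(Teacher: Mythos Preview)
Your proposal is correct and follows essentially the same route as the paper's own proof: retraction-smoothness plus unbiasedness/variance to obtain the per-step descent inequality, Lemma~\ref{lemma-1} to pass from $F_k$ to $F_{k+1}$, telescoping to reach the $\omega_2$ bound, and the randomized-output averaging identity to conclude. The only cosmetic difference is that the paper writes the noise as $\delta_k=\grad F_k(x^k,\xi_k)-\grad F_k(x^k)$ and expands the square explicitly, whereas you invoke the bias--variance decomposition directly; the resulting inequalities are identical.
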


\begin{proof}

Let $\eta^k=-\gamma_k \grad F_k(x^{k}, \xi_{k} )$ and  $\delta_{k}\equiv\grad F_k(x^{k}, \xi_{k} )-\grad F_k(x^{k}), \forall k \geq 1$. Using the assumption that $F \in \mathcal{C}_{L}^{1,1}\left(\mathbb{R}^{n}\right)$, retraction-smooth property \eqref{taylor expansion} and iterate formulation  \eqref{eq:rsg}, we have
\begin{equation}\label{theorem466}
\begin{aligned}
\mathbb{E}_{\xi_k} & \left[ F_{k}(x^{k+1}) \right] \leq  F_k(x^{k})+ \mathbb{E}_{\xi_k} \left[\left\langle  \grad  F_k(x^{k}), \eta ^k \right\rangle \right]+\frac{\ell_{k}}{2}  \mathbb{E}_{\xi_k} \left[ \|  \eta^k \|^{2} \right] \\
%=& F_k(x^{k})+\mathbb{E}_{\xi_k}\left[\left\langle  \grad  F_k(x^{k}), -\gamma_{k}  \grad  F_k(x^{k}, \xi_{k} )\right\rangle\right]+\frac{\ell_{k}}{2} \mathbb{E}_{\xi_k} \left[ \| -\gamma_{k} \grad  F_k(x^{k}, \xi_{k} )\|^{2} \right] \\
%=& F_k(x^{k})-\gamma_{k}\mathbb{E}_{\xi_k}\left[ \left\langle  \grad  F_k(x^{k}),   \grad  F_k(x^{k}, \xi_{k} )\right\rangle \right]+\frac{\ell_{k}}{2} \gamma_{k}^{2}\mathbb{E}_{\xi_k} \left[ \|  \grad  F_k(x^{k}, \xi_{k} )\|^{2} \right] \\
=& F_k(x^{k})-\gamma_{k}\mathbb{E}_{\xi_k}\left[ \left\langle  \grad  F_k(x^{k}),   \grad  F_k(x^{k} )+ \delta_k  \right\rangle \right] +\frac{\ell_{k}}{2} \gamma_{k}^{2}\mathbb{E}_{\xi_k} \left[ \|   \grad  F_k(x^{k} )+ \delta_k  \|^{2} \right] \\
=& F_k(x^{k})-\gamma_{k} \left(
\|  \grad  F_k(x^{k})\|^{2}+\mathbb{E}_{\xi_k} \left[\left\langle  \grad  F_k(x^{k}), \delta_{k}\right\rangle \right] \right) \\
&+ \frac{\ell_{k}}{2} \gamma_{k}^{2}\left[\|  \grad  F_k(x^{k})\|^{2}+2 \mathbb{E}_{\xi_k} \left[\left\langle  \grad  F_k(x^{k}), \delta_{k}\right\rangle \right]+\mathbb{E}_{\xi_k} \left[\|\delta_{k}\|^{2}\right]\right] \\
=& F_k(x^{k})-\left(\gamma_{k}-\frac{\ell_{k}}{2} \gamma_{k}^{2}\right)\|  \grad  F_k(x^{k})\|^{2}\\
&-\left(\gamma_{k}-\ell_{k} \gamma_{k}^{2}\right)\mathbb{E}_{\xi_k} \left[\left\langle  \grad  F_k(x^{k}), \delta_{k}\right\rangle \right]+\frac{\ell_{k}}{2} \gamma_{k}^{2}\mathbb{E}_{\xi_k} \left[\|\delta_{k}\|^{2}\right],
\end{aligned}
\end{equation}
where the first inequality utilizes \eqref{equ:L2} and \eqref{eq:rsg}. 
It follows from \eqref{eq:gradient} that
\begin{equation}\label{theorem6}
\begin{aligned}
& \mathbb{E}_{\xi_{k}}\left[\left\langle \grad F_k(x^{k}), \delta_{k}\right\rangle \right] = 0.
%&  = \mathbb{E}_{\xi_{k}}\left[\left\langle \grad F_k(x^{k}), \grad F_k(x^{k}, \xi_{k} )-\grad F_k(x^{k}) \right\rangle \right] \\ 
%& = \mathbb{E}_{\xi_{k}}\left[\left\langle \grad F_k(x^{k}), \grad F_k(x^{k}, \xi_{k} ) \right\rangle \right]-  \mathbb{E}_{\xi_{k}}\left[\left\langle \grad F_k(x^{k}), \grad F_k(x^{k}) \right\rangle \right] \\ 
%& =\|  \grad  F_k(x^{k})\|^{2}-\|  \grad  F_k(x^{k})\|^{2}
 %= 0 
\end{aligned}
\end{equation}
 Combining \eqref{theorem6}, \eqref{eq:variance} and \eqref{theorem466}, we have that
\begin{equation}\label{eq:expect1}
    \mathbb{E}_{\xi_k}  \left[ F_{k}(x^{k+1}) \right]\leq F_k(x^{k})-\left(\gamma_{k}-\frac{\ell_{k}}{2} \gamma_{k}^{2}\right)\|  \grad  F_k(x^{k})\|^{2}+\frac{\ell_{k}}{2} \gamma_{k}^{2}\sigma^2.
\end{equation}
Taking expectations with respect to all the previous realizations $\xi_1,\cdots,\xi_{k-1}$ on both sides of \eqref{eq:expect1} yields
\begin{equation}\label{eq:expect2}
    \mathbb{E}  \left[ F_{k}(x^{k+1}) \right]\leq  \mathbb{E}  \left[F_k(x^{k})\right]-\left(\gamma_{k}-\frac{\ell_{k}}{2} \gamma_{k}^{2}\right)  \mathbb{E}  \left[\|  \grad  F_k(x^{k})\|^{2}\right]+\frac{\ell_{k}}{2} \gamma_{k}^{2}\sigma^2.
\end{equation}
 It follows from Lemma \ref{lemma-1} that for all $x\in\mathcal{M}$ 
    \begin{equation*}
        F_{k+1}(x) \leq F_k(x) + \frac{1}{2} (\mu_k - \mu_{k+1}) \frac{\mu_k}{\mu_{k+1}} \| \nabla h_{\mu_k}(\mathcal{A}x)  \|^2 \leq F_k(x) + (\mu_k - \mu_{k+1}) \ell_h^2.
    \end{equation*}
    We set $x = x^{k+1}$ and take expectations on both sides for the above inequality, and substitute it into \eqref{eq:expect2} to obtain
    \begin{equation*}
    \begin{aligned}
     \mathbb{E}  \left[ F_{k+1}(x^{k+1}) \right]\leq &    \mathbb{E}  \left[F_k(x^{k})\right]-\left(\gamma_{k}-\frac{\ell_{k}}{2} \gamma_{k}^{2}\right)  \mathbb{E}  \left[ \| \grad F_k(x^{k})\|^{2}\right] \\
     & +\frac{\ell_{k}}{2} \gamma_{k}^{2}\sigma^2 + \left(\mu_k - \mu_{k+1}\right) \ell_h^2.
    \end{aligned}
    \end{equation*}
Summing up the above inequalities and re-arranging the terms yields
\begin{equation}\label{theorem555}
\begin{aligned}
& \sum_{k=1}^{K}\left(\gamma_{k}-\frac{\ell_{k}}{2} \gamma_{k}^{2}\right)  \mathbb{E}  \left[ \|  \grad  F_k(x^{k})\|^{2} \right]\\
  & \leq F_1(x^{1})-F_{K+1}(x^{K+1})+\frac{\sigma^2}{2} \sum_{k=1}^{K} \ell_{k}\gamma_{k}^{2}+\sum_{k=1}^{K}\left(\mu_k - \mu_{k+1}\right) \ell_h^2 \\
 & \leq \omega_2 +\frac{\sigma^2}{2} \sum_{k=1}^{K} \ell_{k}\gamma_{k}^{2},
\end{aligned}
\end{equation}
%where the first inequality follows from the fact that $ F_{K+1}(x^{K+1}) \geq  F^{*}$.
where $\omega_2$ is defined in \eqref{eq:omega}. The definition of $x^R$ in Algorithm \ref{alg:stochastic} implies that
\begin{equation}\label{theorem8}
\begin{aligned}
\mathbb{E}\left[\|\grad  F_R(x^{R})\|^{2}\right] =\frac{\sum_{k=1}^{K}\left(2 \gamma_{k}-\ell_{k} \gamma_{k}^{2}\right) \mathbb{E}\|\grad  F_k(x^{k})\|^{2}}{\sum_{k=1}^{K}\left(2 \gamma_{k}-\ell_{k} \gamma_{k}^{2}\right)}.
\end{aligned}
\end{equation}
Dividing both sides of \eqref{theorem555} by $ \sum_{k=1}^{K}\left(\gamma_{k}-\ell_{k} \gamma_{k}^{2} / 2\right)$ and substituting it into \eqref{theorem8} yields
\begin{equation}\label{theorem9}
\begin{aligned}
 \mathbb{E}\left[\|\grad  F_R(x^{R})\|^{2}\right]
 & \leq \frac{1}{\sum_{k=1}^{N}\left(2 \gamma_{k}-\ell_{k} \gamma_{k}^{2}\right)}\left[{2\omega_2}+\sigma^{2} \sum_{k=1}^{N} \ell_{k}\gamma_{k}^{2}\right],
 \end{aligned}
\end{equation}
which clearly implies \eqref{theorem2}.%and completes the proof.
\end{proof} 

To estimate the righthand side  of \eqref{theorem2}, we need the following lemma. 
\begin{lemma}\label{lemma:k-inequality}
 For any positive integer $K>0$, it follows that
  \begin{equation}
      \sum_{k=1}^K k^{-3/5}\geq \frac{5}{2} K^{\frac{2}{5}}, ~~\sum_{k=1}^K k^{-1} \leq \ln(3K).
  \end{equation}
\end{lemma}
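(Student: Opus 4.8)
The plan is to obtain both estimates by comparing each sum against the integral of the corresponding monotone decreasing function — exactly the device already used for the exponent $-1/3$ in the proof of Theorem~\ref{theorem-1}.

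For the first inequality, observe that $x \mapsto x^{-3/5}$ is decreasing on $(0,\infty)$, so $k^{-3/5} \ge \int_{k}^{k+1} x^{-3/5}\,\mathrm{d}x$ for every integer $k \ge 1$. Summing over $k = 1,\dots,K$ telescopes these integrals into a single one, giving
\[
\sum_{k=1}^{K} k^{-3/5} \;\ge\; \int_{1}^{K+1} x^{-3/5}\,\mathrm{d}x \;=\; \frac{5}{2}\Bigl((K+1)^{2/5} - 1\Bigr).
\]
It then remains to bound this last expression below by the asserted right-hand side, which is a purely one-variable estimate that I would settle by elementary means — checking it directly for the first few values of $K$ and then invoking monotonicity in $K$ — in complete parallel with the step $\frac{3}{2}\bigl((K+1)^{2/3}-1\bigr) \ge \frac{1}{2}K^{2/3}$ already used inside the proof of Theorem~\ref{theorem-1}.

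For the second inequality, the same monotonicity gives $k^{-1} \le \int_{k-1}^{k} x^{-1}\,\mathrm{d}x$ for every $k \ge 2$, hence
\[
\sum_{k=1}^{K} k^{-1} \;=\; 1 + \sum_{k=2}^{K} k^{-1} \;\le\; 1 + \int_{1}^{K} x^{-1}\,\mathrm{d}x \;=\; 1 + \ln K \;\le\; \ln 3 + \ln K \;=\; \ln(3K),
\]
where the last step only uses $1 \le \ln 3$. There is no genuine obstacle in this lemma: its whole content is two standard sum-versus-integral comparisons, and the only place needing a moment's care is the small-$K$ bookkeeping in the first bound. Both inequalities are then fed into the estimate of Theorem~\ref{thm:ssumptionA1}, controlling the denominator $\sum_{k}(2\gamma_k - \ell_k\gamma_k^2)$ and the variance term $\sum_k \ell_k\gamma_k^2$ once the stepsizes $\gamma_k$ and the smoothing parameters $\mu_k = (2\rho)^{-1}k^{-1/5}$ of Algorithm~\ref{alg:stochastic} are substituted.
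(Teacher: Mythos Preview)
Your approach is essentially the paper's: both inequalities are handled by the same integral comparison, and your treatment of the harmonic sum reproduces the paper's argument verbatim, including the final step $1+\ln K \le \ln(3K)$.

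The gap is in the first inequality. The step you defer --- passing from $\frac{5}{2}\bigl((K+1)^{2/5}-1\bigr)$ to $\frac{5}{2}K^{2/5}$ --- is not bookkeeping but is actually false: it is equivalent to $(K+1)^{2/5} \ge K^{2/5}+1$, which fails for every $K\ge 1$ by concavity of $t\mapsto t^{2/5}$. In fact the stated inequality $\sum_{k=1}^{K}k^{-3/5}\ge \frac{5}{2}K^{2/5}$ is itself false (already at $K=1$ the left side is $1$ and the right side is $5/2$), so your proposed ``check directly for the first few values of $K$'' would immediately fail rather than succeed. The paper's own proof commits the same slip at the last ``$\ge$'' of its display. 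The analogy you draw with Theorem~\ref{theorem-1} is misleading: there the integral $\frac{3}{2}\bigl((K+1)^{2/3}-1\bigr)$ only had to dominate $\frac{1}{2}K^{2/3}$, so a factor of three of slack was available; here the target constant $\frac{5}{2}$ is the sharp leading coefficient of the sum and leaves no room. A corrected lemma would assert $\sum_{k=1}^{K}k^{-3/5}\ge c\,K^{2/5}$ for some constant $c<\frac{5}{2}$, and the constants downstream in Corollary~\ref{corollary:stochastic} would change accordingly without affecting the $K^{-2/5}$ rate.
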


\begin{proof}
  Given $k \geq 1$, it holds that that $k^{-3/5} \geq \int_{k}^{k+1} x^{-3/5}$. Then,
\begin{equation*}
\begin{aligned}
    \sum_{k=1}^K k^{-3/5} & \geq \sum_{k=1}^K \int_{k}^{k+1} x^{-3/5} dx  = \int_1^{K+1} x^{-3/5} dx = \frac{5}{2}\left((K+1)^{\frac{2}{5}} - 1\right) \geq \frac{5}{2} K^{\frac{2}{5}}. 
\end{aligned}
\end{equation*}
Similarly, we can obtain that
\begin{equation*}
    \begin{aligned}
    \sum_{k=1}^K k^{-1} &= 1 +  \sum_{k=2}^K k^{-1} \leq 1+ \sum_{k=2}^K \int_{k-1}^{k}  x^{-1} dx  =1+ \int_1^{K} x^{-1} dx = 1+ \ln(K)\leq \ln(3K).
\end{aligned}
\end{equation*}
We complete the proof.
\end{proof}

Based on Theorem \ref{thm:ssumptionA1} and Lemma \ref{lemma:k-inequality}, we obtain the main convergence results.
\begin{corollary}\label{corollary:stochastic}
Suppose that Assumption \ref{assum} and \ref{assum:stochatis} hold.   Let $\omega_1,\omega_2,\omega_3$ be constants given in \eqref{eq:omega} and $ \omega_4 = 2\rho \alpha^2 \|\mcA\|^2. $ Denote $\omega(K): = 4\omega\omega_1^{-3}  -  \omega^2\omega_4^{-5} \ln(3K){K^{-2/5}}$
and suppose  $\omega(K) \geq 0$. Let $\omega = 2\omega_1^{-1}\omega_4^3,$ and stepsize
\begin{equation}\label{theorem11}
\gamma_{k}= \frac{\omega}{\ell_{k}^3}, k=1, \ldots, K.
\end{equation}
 Then we have 
\begin{equation}\label{theorem12}
\mathbb{E}\left[\|\grad F_R(x^{R})\|^{2}\right] \leq \mathcal{B}_{K}:=\left(\omega_2\omega_1^{4}\omega_4^{-3} + 2\sigma^2 \omega_1^2 \omega_4^{-2}  \ln(3K)\right){K^{-2/5}},
\end{equation} If $\mcA$ is also surjective, then for $\hat{x}^R$ defined by $
    \hat{x}^R = x^R - \mathcal{A}^{\dagger}(\mathcal{A}x^R - \prox_{\mu_R h}(\mathcal{A}x^R)),$
 we have
\begin{equation}\label{theorem:stochastic:grad}
\begin{aligned}
   \mathbb{E}\left[ \mathrm{dist}\left(0,\mcP_{T_{x^R}\mcM}\left(\nabla f(\hat{x}^R) + \mcA^* \partial h(\mathcal{A} \hat{x}^R)\right)   \right) \right] & \leq C K^{-1/5} \\
    \|x^R - \hat{x}^R\| & \leq  \sigma_{\min}^{-1}(\mcA)  (2\rho)^{-1} R^{-1/5}\ell_h.
\end{aligned}
\end{equation}
where $C = \sqrt{\left((\omega_3 \ln(3K)   + \omega_2)\omega_1^{4}\omega_4^{-3}  + 2\sigma^2 \omega_1^2 \omega_4^{-2} \ln(3K)\right)}$.
\end{corollary}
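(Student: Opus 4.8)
The plan is to obtain both displays \eqref{theorem12} and \eqref{theorem:stochastic:grad} by specializing Theorem~\ref{thm:ssumptionA1} to the stepsize $\gamma_k=\omega/\ell_k^3$ and then estimating the resulting series with Lemma~\ref{lemma:k-inequality}, in exact analogy with how Theorem~\ref{theorem-1} was derived from the deterministic descent inequality. The first thing I would record is that with $\mu_k=(2\rho)^{-1}k^{-1/5}$ the constant in \eqref{equ:L22} is $\ell_k=(\omega_1-\omega_4)+\omega_4 k^{1/5}$, hence $\omega_4 k^{1/5}\le\ell_k\le\omega_1 k^{1/5}$ for every $k\ge1$. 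Consequently $\ell_k\gamma_k=\omega/\ell_k^2\le\omega/\omega_4^2=2\omega_4/\omega_1\le2$, so the stepsize is admissible for Theorem~\ref{thm:ssumptionA1}; moreover $\omega\omega_1^{-3}k^{-3/5}\le\gamma_k\le\omega\omega_4^{-3}k^{-3/5}$ and $\ell_k\gamma_k^2=\omega^2\ell_k^{-5}\le\omega^2\omega_4^{-5}k^{-1}$.

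For \eqref{theorem12} I would plug these estimates into the right-hand side of \eqref{theorem2}. The numerator is at most $2\omega_2+\sigma^2\omega^2\omega_4^{-5}\sum_{k=1}^K k^{-1}\le 2\omega_2+\sigma^2\omega^2\omega_4^{-5}\ln(3K)$ by Lemma~\ref{lemma:k-inequality}. For the denominator, $\sum_{k=1}^K(2\gamma_k-\ell_k\gamma_k^2)\ge 2\omega\omega_1^{-3}\sum_{k=1}^K k^{-3/5}-\omega^2\omega_4^{-5}\sum_{k=1}^K k^{-1}\ge 5\omega\omega_1^{-3}K^{2/5}-\omega^2\omega_4^{-5}\ln(3K)=K^{2/5}\big(\omega\omega_1^{-3}+\omega(K)\big)\ge\omega\omega_1^{-3}K^{2/5}$, where the last step is precisely where the standing hypothesis $\omega(K)\ge0$ enters. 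Taking the ratio and substituting $\omega=2\omega_1^{-1}\omega_4^3$ turns the two pieces into $\omega_2\omega_1^4\omega_4^{-3}K^{-2/5}$ and $2\sigma^2\omega_1^2\omega_4^{-2}\ln(3K)K^{-2/5}$, i.e.\ exactly $\mathcal{B}_K$.

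For the surjective case I would first deal with the pathwise bound. Since $\mathcal{A}\mathcal{A}^{\dagger}=\mathrm{id}$, the definition of $\hat x^R$ gives $\mathcal{A}\hat x^R=\prox_{\mu_R h}(\mathcal{A}x^R)$, so $\nabla h_{\mu_R}(\mathcal{A}x^R)\in\partial h(\mathcal{A}\hat x^R)$ by Proposition~\ref{propos-1}; together with $\|\mathcal{A}^{\dagger}\|\le\sigma_{\min}^{-1}(\mathcal{A})$ and $\|\mathcal{A}x^R-\prox_{\mu_R h}(\mathcal{A}x^R)\|\le\mu_R\ell_h$ this yields $\|x^R-\hat x^R\|\le\sigma_{\min}^{-1}(\mathcal{A})(2\rho)^{-1}R^{-1/5}\ell_h$, the second line of \eqref{theorem:stochastic:grad}. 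For the first line I would reuse the chain of inequalities from the proof of Lemma~\ref{lemma-2} and \eqref{eq:composite2}: inserting $\pm\nabla f(x^R)$ and using non-expansiveness of $\mathcal{P}_{T_{x^R}\mathcal{M}}$ together with $\ell_{\nabla f}$-smoothness of $f$,
\[
\mathrm{dist}\big(0,\mathcal{P}_{T_{x^R}\mathcal{M}}(\nabla f(\hat x^R)+\mathcal{A}^*\partial h(\mathcal{A}\hat x^R))\big)\le \ell_{\nabla f}\|x^R-\hat x^R\|+\|\grad F_R(x^R)\|.
\]
Now take expectation over all randomness, including the returned index $R$. By Jensen, $\mathbb{E}[\|\grad F_R(x^R)\|]\le\sqrt{\mathbb{E}[\|\grad F_R(x^R)\|^2]}\le\sqrt{\mathcal{B}_K}$. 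For the other term, $\|x^k-\hat x^k\|^2\le\sigma_{\min}^{-2}(\mathcal{A})(2\rho)^{-2}\ell_h^2 k^{-2/5}$ holds deterministically for each $k$, so averaging with the sampling weights $p_k=2\gamma_k-\ell_k\gamma_k^2$, using $\sum p_k k^{-2/5}\le 2\omega\omega_4^{-3}\sum k^{-1}\le2\omega\omega_4^{-3}\ln(3K)$ and the denominator bound above, gives $\mathbb{E}[\|x^R-\hat x^R\|^2]\le 2\omega_1^3\omega_4^{-3}\sigma_{\min}^{-2}(\mathcal{A})(2\rho)^{-2}\ell_h^2\ln(3K)K^{-2/5}$; since $\ell_{\nabla f}^2\sigma_{\min}^{-2}(\mathcal{A})(2\rho)^{-2}\ell_h^2=\omega_3\omega_4$, another application of Jensen bounds $\ell_{\nabla f}\mathbb{E}[\|x^R-\hat x^R\|]$ by $\sqrt{2\omega_3\omega_1^3\omega_4^{-2}\ln(3K)}\,K^{-1/5}$. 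Combining the two contributions and collecting them under a single square root yields a bound of the form $CK^{-1/5}$ with $C$ as stated.

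The computation is essentially routine once these pieces are in place; the main obstacle is bookkeeping rather than ideas. Concretely one must (i) verify the prescribed stepsize is admissible for Theorem~\ref{thm:ssumptionA1} ($\ell_k\gamma_k<2$) and, more delicately, that $\sum_k(2\gamma_k-\ell_k\gamma_k^2)$ remains of order $K^{2/5}$ — this is exactly what the hypothesis $\omega(K)\ge0$ guarantees — and (ii) handle the randomness of $R$ correctly, namely combine the deterministic per-iterate control of $\|x^k-\hat x^k\|$ with the in-expectation control of $\|\grad F_k(x^k)\|^2$ through the sampling law $\mathrm{Prob}(R=k)\propto 2\gamma_k-\ell_k\gamma_k^2$, invoking Jensen's inequality to pass from squared quantities back to $\mathbb{E}[\mathrm{dist}(\cdot)]$. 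The final repackaging of the two error terms into the constant $C$ parallels the passage from \eqref{eq:composite2} to \eqref{eq:theorem2-min-2} in the proof of Theorem~\ref{theorem-1}.
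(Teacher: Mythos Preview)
Your proposal is correct and follows essentially the same route as the paper: sandwich $\ell_k$ between $\omega_4 k^{1/5}$ and $\omega_1 k^{1/5}$, feed the stepsize $\gamma_k=\omega/\ell_k^3$ into Theorem~\ref{thm:ssumptionA1} with the series estimates of Lemma~\ref{lemma:k-inequality} and the hypothesis $\omega(K)\ge 0$ to control the denominator, and for the surjective part invoke the split \eqref{eq:composite2} together with the pathwise bound on $\|x^k-\hat x^k\|$. The only minor organizational difference is in the last combination step: the paper squares the inequality $\mathrm{dist}\le \ell_{\nabla f}\|x^R-\hat x^R\|+\|\grad F_R(x^R)\|$ first (via $(a+b)^2\le 2(a^2+b^2)$), bounds the weighted sum $\sum_k(\gamma_k-\tfrac{\ell_k}{2}\gamma_k^2)\,\mathrm{dist}^2$ directly, and then divides to obtain $\mathbb{E}[\mathrm{dist}^2]$ before a single Jensen step, whereas you bound $\mathbb{E}[\|x^R-\hat x^R\|^2]$ and $\mathbb{E}[\|\grad F_R(x^R)\|^2]$ separately and recombine --- this produces a constant of the same order but not literally the stated $C$.
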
 

\begin{proof}
By the definition of $\mu_k$ in Algorithm \ref{alg:stochastic} and  \eqref{equ:L22}, we deduce that
\begin{equation}
    \begin{aligned}
k^{-1/5} \omega_1^{-1} =   k^{-1/5} \frac{  1}{ (\alpha^2 \ell_{\nabla f} +2G\beta) + 2\rho   \alpha^2 \|\mathcal{A}\|^2}  \leq   \frac{1}{\ell_k}  \leq k^{-1/5} \frac{1}{2\rho \alpha^2 \|\mcA\|^2} = k^{-1/5} \omega_4^{-1}.
    \end{aligned}
\end{equation}
% it is 
% \begin{equation}
%     \begin{aligned}
%   k^{-1/5} \omega_1^{-1}   \leq \ell_k^{-1}  \leq k^{-1/5} \omega_4^{-1}.
%     \end{aligned}
% \end{equation}
Combining  Lemma \ref{lemma:k-inequality} and \eqref{theorem11} yields
\begin{equation}\label{equ:lk:gamak}
 \sum_{k=1}^K    \ell_k \gamma_k^2 = \omega^2\sum_{k=1}^K    \ell_k^{-5} \leq  {\omega^2}{\omega_4^{-5}} \sum_{k=1}^K k^{-1} \leq {\omega^2}{\omega_4^{-5}} \ln(3K),
\end{equation}
and
\begin{equation}\label{equ:gamak1}
 \sum_{k=1}^K     \gamma_k = \omega\sum_{k=1}^K    \ell_k^{-3} \geq
 \omega\omega_1^{-3}   \sum_{k=1}^K k^{-3/5} \geq \frac{5}{2}\omega\omega_1^{-3} K^{\frac{2}{5}}. 
\end{equation}
 It is clear that 
\begin{equation}
    0<\gamma_k = \frac{\omega}{\ell_k^3} = \frac{2\omega_4^3}{\omega_1}\ell_k^{-3} \leq k^{-3/5}\frac{2}{\omega_1} \leq k^{-1/5}\frac{2}{\omega_1} \leq \frac{2}{\ell_k}.
\end{equation}
Thus  $2\gamma_k - \ell_k\gamma_k^2 >0$ for all $k>0$.
This implies that
\begin{equation}\label{theorem13}
\begin{aligned}
\frac{2\omega_2+\sigma^{2} \sum_{k=1}^{K} \ell_k\gamma_{k}^2}{\sum_{k=1}^{K}\left(2 \gamma_{k}-\ell_{k} \gamma_{k}^{2}\right)} & \leq  \frac{2\omega_2+\sigma^{2}\omega^2\omega_4^{-5}\ln(3K)}{5\omega\omega_1^{-3} K^{2/5} -  \omega^2\omega_4^{-5}\ln(3K) } \\
& = \frac{\left(2\omega_2+\sigma^{2}\omega^2\omega_4^{-5} \ln(3K)\right){K^{-2/5}}}{5\omega\omega_1^{-3}  -  \omega^2\omega_4^{-5}{\ln(3K)}{K^{-2/5}} } \\
& =\frac{\left(2\omega_2+4\sigma^{2}\omega_1^{-2}\omega_4 \ln(3K)\right) K^{-2/5}}{\omega\omega_1^{-3}  +\omega(K)} \\
& \leq \frac{\left(2\omega_2+4\sigma^{2}\omega_1^{-2}\omega_4 \ln(3K)\right) K^{-2/5}}{2\omega_1^{-4}\omega_4^{3}  } \\
& = \left(\omega_2\omega_1^{4}\omega_4^{-3}  + 2\sigma^2 \omega_1^2 \omega_4^{-2} \ln(3K)\right) \times {K^{-2/5}},
\end{aligned}
\end{equation}
where the first inequality utilizes \eqref{equ:lk:gamak} and \eqref{equ:gamak1}, the second inequality is due to that $\omega(K) >0$. Combining with \eqref{theorem2} implies \eqref{theorem12}.

If $\mcA$ is also surjective, the definition of $\hat{x}^R$ implies that
%  \begin{equation}\label{eq:xstar3}
%   x^R -  \hat{x}^R =  \mathcal{A}^{\dagger}(\mathcal{A}x^R - \prox_{\mu_R h}(\mathcal{A}x^R)),
% \end{equation}
%  the operator norm of $\mcA^{\dagger}$ is bounded by $\sigma_{\min}^{-1}(\mcA)$ and $h$ is $\ell_h$-Lipschitz continuous, we have
 $$ \begin{aligned}
     \|x^R -\hat{x}^R\| \leq  \sigma_{\min }^{-1}(\mcA)\|\mcA x^R-z^R\|  \leq \frac{\sigma_{\min}^{-1}(\mcA) (2\rho)^{-1} \ell_h}{R^{1/5}}.
    \end{aligned}$$
Furthermore, \eqref{eq:composite2} implies that 
 
     \begin{equation}\label{eq:composite211}
    \begin{aligned}
\mathrm{dist}\left(0,\mcP_{T_{x^R}\mcM}(\nabla f(\hat{x}^R) + \mcA^* \partial h(\mcA \hat{x}^R))   \right)
 \leq    \frac{\ell_{\nabla f}\sigma_{\min}^{-1}(\mcA) (2\rho)^{-1} \ell_h}{R^{1/5}}+\| \grad F_R(x^R)\|.
\end{aligned}
\end{equation}
 Note that \begin{equation}\label{equ:gamak11}
        \begin{aligned}
        \left(\gamma_{k}-\frac{\ell_{k}}{2} \gamma_{k}^{2}\right) \leq \omega \ell_k^{-3} -\frac{1}{2} \omega^2 \ell_k^{-5} \leq \omega \omega^{-3} k^{-3/5}.
        \end{aligned}
        \end{equation}
 This implies that
  \begin{equation}
  \begin{aligned}
     & \sum_{k=1}^K  \left(\gamma_{k}-\frac{\ell_{k}}{2} \gamma_{k}^{2}\right) \mathrm{dist}^2\left(0,\mcP_{T_{x^k}\mcM}\left(\nabla f(\hat{x}^k) + \mcA^* \partial h(\mcA \hat{x}^k)\right)   \right) \\
      & \leq  2\sum_{k=1}^K \left(\gamma_{k}-\frac{\ell_{k}}{2} \gamma_{k}^{2}\right)\frac{(\ell_{\nabla f}\sigma_{\min}^{-1}(\mcA) (2\rho)^{-1} \ell_h)^2}{k^{2/5}} +  2\sum_{k=1}^K \left(\gamma_{k}-\frac{\ell_{k}}{2} \gamma_{k}^{2}\right) \| \grad F_k(x^k)\|^2\\
      & \leq 2\omega_3 \sum_{k=1}^K \frac{1}{k}   + 2\omega_2 +\sigma^2 \sum_{k=1}^{K} \ell_{k}\gamma_{k}^{2}  \leq 2\omega_3 \ln(3K)   +2 \omega_2 +\sigma^2 \sum_{k=1}^{K} \ell_{k}\gamma_{k}^{2}.\\
  \end{aligned}
  \end{equation}
  Similar to the above analysis, one can obtain that
  \begin{equation}
  \begin{aligned}
           & \mathbb{E}\left[\mathrm{dist}^2\left(0,\mcP_{T_{x^R}\mcM}\left(\nabla f(\hat{x}^R) + \mcA^* \partial h(\mcA \hat{x}^R)\right)  \right)\right]  \\
           & \leq  \left((\omega_3 \ln(3K)   + \omega_2)\omega_1^{4}\omega_4^{-3}  + 2\sigma^2 \omega_1^2 \omega_4^{-2} \ln(3K)\right) \times {K^{-2/5}},
           %\\
          % &\revise{ \leq  \left((\omega_3 
          % \ln(3K)   + \omega_2)\omega_1^{4}\omega_4^{-3}  + 2\sigma^2 \omega_1^2 \omega_4^{-2} \ln(3K)\right) \times {K^{-2/5}}},
  \end{aligned}
         \end{equation}
    which implies that \eqref{theorem:stochastic:grad} holds. The proof is completed. 
\end{proof} 

\begin{remark}
In Corollary \ref{corollary:stochastic}, we assume that   $\omega(K)\geq 0$, it is reasonable because that when $K$ is large enough  ${\ln(3K)}{K^{-2/5}}$ will tend to zero.   Corollary \ref{corollary:stochastic} claims that, given $\epsilon>0$, the total  iterations  performed by Algorithm \ref{alg:stochastic} for   is $\tilde{\mathcal{O}}(\epsilon^{-5})$. 

\end{remark}
\subsection{Riemannian smoothing stochastic gradient method with epochs}

We give a variant of Algorithm \ref{alg:stochastic} in which the steps are organized into a series of epochs, each of which is twice as long as the one before, which is described in Algorithm \ref{alg:stochastic-epoch}.

\begin{algorithm}
\caption{Riemannian smoothing stochastic gradient method with epochs for   \eqref{eq:rsg-22}.} \label{alg:stochastic-epoch}
  \textbf{Input:}{$x^1\in\mathcal{M}$ and tolerance  $\epsilon>0$; }\\
  \For{$l = 0,1,2,\cdots$}
  {
 % \revise{Set $S_l = \infty, k_l = 2^l$.} \\
  \For{$k = 2^l,2^l+1,\cdots,2^{l+1}-1 $}{
 Sample $\xi_k \sim \mathcal{D}$, set $\mu_k = (2\rho)^{-1}k^{-1/5} $ and set $\ell_k$ as in \eqref{equ:L22}. \\
 Compute the stochastic gradient $\grad  F_k(x^{k},\xi_k )$ via \eqref{eq:riestochas}.\\
 Update $x^{k+1}$ via
\begin{equation}\label{eq:rsg21}
x^{k+1} = \mathcal{R}_{x^k}( - \gamma_k\grad  F_k(x^{k},\xi_k )).
\end{equation}
% \If{$\|\operatorname{grad} F_{k+1}(x^{k+1})\| \leq S_{l}$}{
%  $S_l \leftarrow \|\operatorname{grad} F_{k+1}(x^{k+1})\| $, \revise{$k_l \leftarrow k+1$};\\
% \If{$S_{l} \leq \epsilon$ and $\|\mathcal{A} x^{k+1}-\operatorname{prox}_{\mu_{k+1} h}(\mathcal{A} x^{k+1})\|  \leq \epsilon $}{
% break;}
% }
\textbf{end}}
Sample $R_l\in\{2^l,\cdots,2^{l+1}-1\}$ via $\mathrm{Prob}(R_l = k) = \frac{2 \gamma_{k}-\ell_{k} \gamma_{k}^{2}}{\sum_{k=2^l}^{2^{l+1}-1}\left(2 \gamma_{k}-\ell_{k} \gamma_{k}^{2}\right)}$. 
\textbf{end} }
%\KwOut{ \revise{$x^{R}$}}
\end{algorithm}

\begin{theorem}\label{thm:sto:epoch}
Suppose that Assumption \ref{assum} and \ref{assum:stochatis} hold. Let $\omega_1,\omega_2,\omega_3,\omega_4,\omega$ be constants given in Corollary \ref{corollary:stochastic}. The iteration sequence $\{x^k\}$ is produced by  Algorithm \ref{alg:stochastic-epoch} for solving problem \eqref{eq:rsg-22}.  Assume that $\mcA$ is surjective. Then, for some $k^{\prime}=O(\epsilon^{-5})$, we have that
	\begin{equation}\label{theorem:stochastic:grad2}
\begin{aligned}
   \mathbb{E}\left[ \mathrm{dist}\left(0,\mcP_{T_{x^{k^{\prime}}}\mcM}\left(\nabla f(\hat{x}^{k^{\prime}}) + \mcA^* \partial h(\mathcal{A} \hat{x}^{k^{\prime}})\right)   \right)\right]  \leq \epsilon, ~~
    \|x^{k^{\prime}} - \hat{x}^{k^{\prime}}\| & \leq  \epsilon,
\end{aligned}
\end{equation}
where $\hat{x}^{k^{\prime}}: = x^{k^{\prime}} - \mathcal{A}^{\dagger}(\mathcal{A}x^{k^{\prime}} - \prox_{\mu_k h}(\mcA x^{k^{\prime}}) )$.
\end{theorem}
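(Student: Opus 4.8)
The plan is to run the single-trajectory stochastic estimate of Theorem~\ref{thm:ssumptionA1}/Corollary~\ref{corollary:stochastic} \emph{inside} each epoch $[2^l,2^{l+1}-1]$ and then pick the first epoch whose bound falls below $\epsilon^2$, exactly as Proposition~\ref{pro:epoch} and Theorem~\ref{thm:full:epoch} turn their one-shot results into epoch statements. First I would restrict the telescoping that produced \eqref{theorem555} to a single epoch: summing \eqref{eq:expect2} together with the Moreau-envelope comparison of Lemma~\ref{lemma-1} over $k=2^l,\dots,2^{l+1}-1$ gives
\[
\sum_{k=2^l}^{2^{l+1}-1}\Bigl(\gamma_k-\tfrac{\ell_k}{2}\gamma_k^2\Bigr)\mathbb{E}\bigl[\|\grad F_k(x^k)\|^2\bigr]\le \mathbb{E}\bigl[F_{2^l}(x^{2^l})\bigr]-\mathbb{E}\bigl[F_{2^{l+1}}(x^{2^{l+1}})\bigr]+\tfrac{\sigma^2}{2}\sum_{k=2^l}^{2^{l+1}-1}\ell_k\gamma_k^2+(\mu_{2^l}-\mu_{2^{l+1}})\ell_h^2 .
\]
Since $\mathcal M$ is compact and $h$ is $\ell_h$-Lipschitz, $h_{\mu_k}\le h$ is uniformly bounded on the compact set $\mathcal A\mathcal M$, so $F_k$ admits two-sided bounds on $\mathcal M$ independent of $k$; hence the right-hand side is at most $\bar\omega_2+\tfrac{\sigma^2}{2}\sum_{k=2^l}^{2^{l+1}-1}\ell_k\gamma_k^2$ for an absolute constant $\bar\omega_2$ of the same nature as $\omega_2$, and combining with the sampling rule for $R_l$ as in \eqref{theorem8} yields $\mathbb{E}[\|\grad F_{R_l}(x^{R_l})\|^2]\le\bigl(2\bar\omega_2+\sigma^2\sum_{k=2^l}^{2^{l+1}-1}\ell_k\gamma_k^2\bigr)\big/\bigl(\sum_{k=2^l}^{2^{l+1}-1}(2\gamma_k-\ell_k\gamma_k^2)\bigr)$.

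Next I would bound the epoch sums. Using $\omega_4 k^{1/5}\le\ell_k\le\omega_1 k^{1/5}$ and $\gamma_k=\omega/\ell_k^3$ from the proof of Corollary~\ref{corollary:stochastic}, together with $\sum_{k=2^l}^{2^{l+1}-1}k^{-1}\le\ln 4$ (as in \eqref{equ:k-inequality}) and $\sum_{k=2^l}^{2^{l+1}-1}k^{-3/5}\ge\tfrac12(2^l)^{2/5}$ (as in the proof of Proposition~\ref{pro:epoch}), one gets $\sum_{k=2^l}^{2^{l+1}-1}\ell_k\gamma_k^2\le\omega^2\omega_4^{-5}\ln 4$, $\sum_{k=2^l}^{2^{l+1}-1}\gamma_k\ge\tfrac12\omega\omega_1^{-3}(2^l)^{2/5}$ and, for $l$ past a fixed $\epsilon$-independent threshold, $\sum_{k=2^l}^{2^{l+1}-1}(2\gamma_k-\ell_k\gamma_k^2)\ge\tfrac12\omega\omega_1^{-3}(2^l)^{2/5}$. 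Plugging these in gives $\mathbb{E}[\|\grad F_{R_l}(x^{R_l})\|^2]\le C_1(2^l)^{-2/5}$ for an explicit $C_1=C_1(\omega_1,\bar\omega_2,\omega_4,\omega,\sigma,\ell_h,\rho)$, hence $\mathbb{E}[\|\grad F_{R_l}(x^{R_l})\|]\le\sqrt{C_1}(2^l)^{-1/5}$ by Jensen.

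For the surjective part, set $\hat x^{R_l}=x^{R_l}-\mathcal A^{\dagger}(\mathcal A x^{R_l}-\prox_{\mu_{R_l}h}(\mathcal A x^{R_l}))$; then $\mathcal A\hat x^{R_l}=\prox_{\mu_{R_l}h}(\mathcal A x^{R_l})$, $\|\mathcal A^{\dagger}\|\le\sigma_{\min}^{-1}(\mathcal A)$ and \eqref{eq:moreau-gradient-bound} give $\|x^{R_l}-\hat x^{R_l}\|\le\sigma_{\min}^{-1}(\mathcal A)(2\rho)^{-1}\ell_h(2^l)^{-1/5}$ (using $\mu_{R_l}\le\mu_{2^l}$ as $R_l\ge2^l$), while the chain \eqref{eq:composite2} from Lemma~\ref{lemma-2} gives $\mathrm{dist}(0,\mcP_{T_{x^{R_l}}\mcM}(\nabla f(\hat x^{R_l})+\mcA^*\partial h(\mcA\hat x^{R_l})))\le\ell_{\nabla f}\sigma_{\min}^{-1}(\mathcal A)\mu_{R_l}\ell_h+\|\grad F_{R_l}(x^{R_l})\|$; taking expectations and using the previous paragraph bounds this by $C_2(2^l)^{-1/5}$. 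I then take $l$ to be the smallest nonnegative integer with $2^l\ge\max\{C_2^{\,5},(\sigma_{\min}^{-1}(\mathcal A)(2\rho)^{-1}\ell_h)^5\}\epsilon^{-5}$ and set $k'=R_l\le2^{l+1}-1=O(\epsilon^{-5})$; then \eqref{theorem:stochastic:grad2} holds.

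The step I expect to be the main obstacle is the per-epoch descent estimate. In the deterministic setting underlying Theorem~\ref{thm:full:epoch} one telescopes $F_{2^l}(x^{2^l})-F^\ast$ across epochs because $\{F_k(x^k)\}$ is essentially monotone; in the stochastic case the variance term $\tfrac{\ell_k}{2}\gamma_k^2\sigma^2$ in \eqref{eq:expect2} can make $F_k(x^k)$ increase, so that telescoping is not available. The fix is to replace it by a uniform-in-$k$ two-sided bound on $F_k$ over the compact manifold $\mathcal M$ (valid since $h$ is Lipschitz and $h_{\mu_k}\le h$), which keeps the right-hand side of the per-epoch inequality an absolute constant rather than something growing with $l$; once that is in place, the rest is a routine repetition of the estimates of Corollary~\ref{corollary:stochastic} restricted to an epoch.
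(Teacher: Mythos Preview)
Your proposal is correct and follows essentially the same route as the paper: telescope the one-step descent inequality \eqref{eq:expect2} together with Lemma~\ref{lemma-1} over a single epoch $[2^l,2^{l+1}-1]$, bound the epoch sums $\sum k^{-3/5}\ge\tfrac12(2^l)^{2/5}$ and $\sum k^{-1}\le\ln 4$ exactly as you do, combine with the weighted sampling of $R_l$, pass to $\hat x^{R_l}$ via the chain \eqref{eq:composite2}, and then select the first epoch $l$ with $2^l\gtrsim\epsilon^{-5}$.

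The one substantive difference is your handling of the per-epoch constant. The paper asserts \eqref{theorem77} with the global constant $\omega_2$, appealing (as in Proposition~\ref{pro:epoch}) to ``monotonicity of $\{\grad F_k(x_k)\}$''; as you correctly identify in your last paragraph, that monotonicity is not available once the variance term $\tfrac{\ell_k}{2}\gamma_k^2\sigma^2$ enters, and naively dropping terms from the full-range sum \eqref{theorem555} would leave $\tfrac{\sigma^2}{2}\sum_{k=1}^{2^{l+1}-1}\ell_k\gamma_k^2$ (of order $\ln(2^l)$) rather than the per-epoch sum on the right. Your fix---bounding $\mathbb{E}[F_{2^l}(x^{2^l})]-\mathbb{E}[F_{2^{l+1}}(x^{2^{l+1}})]$ by an absolute constant $\bar\omega_2$ via compactness of $\mathcal M$, the Lipschitz continuity of $h$, and $h_{\mu}\le h$---is precisely what is needed to make the epoch argument go through with a bounded right-hand side, and is more rigorous than what the paper writes. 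Otherwise the two proofs coincide (the paper works with $\mathbb{E}[\mathrm{dist}^2]$ and divides by the weighted denominator, while you bound $\mathbb{E}[\|\grad F_{R_l}\|]$ via Jensen first and then apply \eqref{eq:composite2}; these are equivalent up to constants).
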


\begin{proof}
 Given \eqref{theorem555}, we can leverage the monotonicity of $\left\{\grad F_{k}\left(x_{k}\right)\right\}$ and omit nonnegative terms. This allows us establishing the following inequality: 
\begin{equation}\label{theorem77}
\sum_{k=2^{l}}^{2^{l+1}-1}\left(\gamma_{k}-\frac{\ell_{k}}{2} \gamma_{k}^{2}\right) \mathbb{E}_{\xi}\| \grad F_k(x^{k})\|^{2} \leq  \omega_2 +\frac{\sigma^{2}}{2} \sum_{k=2^{l}}^{2^{l+1}-1} \ell_{k} \gamma_{k}^{2}.
\end{equation}
Similar to Lemma \ref{lemma:k-inequality}, one can easily show that
\begin{equation} \label{equ:stochastic:k}
    \sum_{k=2^{l}}^{2^{l+1}-1} k^{-3/5} \geq \frac{1}{2}(2^{l})^{2 / 5},~~ \sum_{k=2^{l}}^{2^{l+1}-1} k^{-1} \leq \ln(4) \leq 2.
\end{equation}
Additionally, based on the definition of $\hat{x}^{R_l}$ in Corollary \ref{corollary:stochastic}, it can be inferred that
 $$ \begin{aligned}
     \|x^{R_l} -\hat{x}^{R_l}\| \leq  \sigma_{\min }^{-1}(\mcA)\|\mcA x^{R_l}-z^{R_l}\|  \leq \frac{\sigma_{\min}^{-1}(\mcA) (2\rho)^{-1} \ell_h}{{R_l}^{1/5}}  \leq \frac{\sigma_{\min}^{-1}(\mcA) (2\rho)^{-1} \ell_h}{(2^l)^{1/5}}.
    \end{aligned}$$
As in \eqref{eq:composite2}, one can obtain that     \begin{equation}\label{eq:composite2111}
    \begin{aligned}
\mathrm{dist}\left(0, \mcP_{T_{x^{R_l}}\mcM}(\nabla f(\hat{x}^{R_l}) + \mcA^* \partial h(\mcA \hat{x}^{R_l}))   \right)
 \leq    \frac{\ell_{\nabla f}\sigma_{\min}^{-1}(\mcA) (2\rho)^{-1} \ell_h}{{R_l}^{1/5}}+\| \grad F_{R_l}(x^{R_l})\|.
\end{aligned}
\end{equation}
 This implies that
  \begin{equation}
  \begin{aligned}
     & \sum_{k=2^l}^{2^{l+1}-1} \left(\gamma_{k}-\frac{\ell_{k}}{2} \gamma_{k}^{2}\right) \mathrm{dist}^2\left(0, \mcP_{T_{x^k}\mcM}\left(\nabla f(\hat{x}^k) + \mcA^* \partial h(\mcA \hat{x}^k)\right)  \right) \\
      & \leq  2\sum_{k=2^l}^{2^{l+1}-1} \left(\gamma_{k}-\frac{\ell_{k}}{2} \gamma_{k}^{2}\right)\frac{(\ell_{\nabla f}\sigma_{\min}^{-1}(\mcA) (2\rho)^{-1} \ell_h)^2}{k^{2/5}} +  2\sum_{k=2^l}^{2^{l+1}-1} \left(\gamma_{k}-\frac{\ell_{k}}{2} \gamma_{k}^{2}\right) \| \grad F_k(x^k)\|^2\\
      & \leq 2\omega_3 \sum_{k=2^l}^{2^{l+1}-1} \frac{1}{k}   + 2\omega_2 +\sigma^2 \sum_{k=2^l}^{2^{l+1}-1} \ell_{k}\gamma_{k}^{2}  \leq 4\omega_3    +2 \omega_2 +\sigma^2 \sum_{k=2^l}^{2^{l+1}-1} \ell_{k}\gamma_{k}^{2},\\
  \end{aligned}
  \end{equation}
 where the second inequality utilizes \eqref{equ:gamak11} and the last inequality utilizes \eqref{equ:stochastic:k}.  Similar to the above analysis, one can obtain that
  \begin{equation}
  \begin{aligned}
           & \mathbb{E}\left[\mathrm{dist}^2 \left(0,\mcP_{T_{x^{R_l}}\mcM}\left(\nabla f(\hat{x}^{R_l}) + \mcA^* \partial h(\mcA \hat{x}^{R_l})\right)  \right)\right]  \\
           & \leq  \left((2\omega_3    + \omega_2)\omega_1^{4}\omega_4^{-3}  + 2\sigma^2 \omega_1^2 \omega_4^{-2} \ln(3K)\right) \times {(2^l)^{-2/5}},
           %\\
          % &\revise{ \leq  \left((\omega_3 
          % \ln(3K)   + \omega_2)\omega_1^{4}\omega_4^{-3}  + 2\sigma^2 \omega_1^2 \omega_4^{-2} \ln(3K)\right) \times {K^{-2/5}}},
  \end{aligned}
  \end{equation}
 With the considerations made in the previous proof, we can choose $l$ to be the smallest positive integer such that
	$$
	2^{l+1} \geq 2 \max \left\{8\sqrt{\omega_1^{3}(\omega_2+\omega_3)^{3}}, \sigma_{\min }^{-3}(\mathcal{A}) \ell_{h}^{3}(2 \rho)^{-3}\right\} \epsilon^{-5}.
	$$
	The claim then holds for some $k^{\prime} \leq 2^{l+1}$. The proof is completed.
\end{proof}

\section{Numerical results}\label{sec:num}
In this section, some numerical experiments are presented to evaluate the performance of our four algorithms, referred to as R-full, R-full-epoch, R-stochastic, and R-stochastic-epoch, correspond to algorithms \ref{alg:full} to \ref{alg:stochastic-epoch} respectively. We compare our algorithms to the existing methods including ManPG \cite{chen2020proximal} and its adaptive version (ManPG-A), and the Riemannian subgradient method (R-sub) in \cite{li2021weakly}. At the $k$-th iteration,  the Riemannian subgradient method gets $x^{k+1}$ via 
$$
x^{k+1} = \mathcal{R}_{x^k}(-\gamma_k \tilde{\nabla}_R F(x^k)), \tilde{\nabla}_R F(x^k) \in \partial_R F(x^k), 
$$
where $\partial_R F(x^k)$ denote the Riemannian subgradient of $F$ at $x^k$ and $\gamma_k$ is a step size.  Our four algorithms are terminated  if
\begin{equation}\label{kkt stop1}
    \max\left\{\|\grad F_k(x^k)\| ,\|\mathcal{A}x^k - \prox_{\mu_k h}(\mathcal{A}x^k)\|\right\} \leq \mbox{tol},
\end{equation}
where ``tol'' is a given accuracy tolerance.  
The ManPG and ManPG-A algorithms are terminated if
\begin{equation}\label{kkt stop3}
     \|E(\Lambda^k)\|_2 \leq \mbox{tol},
\end{equation}
where $E(\Lambda^k)$ is given by  \cite{chen2020proximal}. In the following experiments, we will first run the ManPG algorithm, and terminate it when either condition \eqref{kkt stop3} is satisfied or the maximum iteration steps of 10,000 are reached. The obtained function value is denoted as $F_M$. For our four algorithms, we terminate them when either of the following three conditions is met:
\begin{enumerate}
    \item[(1)] the criterion \eqref{kkt stop1} is hit with the given tolerance;
    \item[(2)] the maximum iteration steps of 1000 are reached;
    \item[(3)] the objective function value satisfies $F(x^k)\leq F_M - 10^{-10}$.
\end{enumerate}
For the R-sub, we terminate it when either the objective function value satisfies $F(x^k)\leq F_M + 10^{-10}$ or the maximum iteration steps of 10,000 are reached.

\subsection{Sparse principal component analysis}
In this subsection, we compare those algorithms on sparse principal component analysis (SPCA) problems.
Given a data set $\{b_1,\cdots, b_m\}$ where $b_i\in\mathbb{R}^{n\times 1}$, the SPCA problem is
\begin{equation}\label{spca}
\begin{aligned}
  \min_{X\in\mathbb{R}^{n\times r}}   \sum_{i=1}^{m}\|b_i - XX^Tb_i\|_2^2 + \mu \|X\|_1, ~ ~ \mbox{s.t. }~~  X^TX = I_r,
  \end{aligned}
\end{equation}
where $\mu > 0$ is a regularization parameter. Let $B = [b_1,\cdots, b_m ]^{T}\in\mathbb{R}^{m\times n}$, problem \eqref{spca} can be rewritten as: %we have %obtain the following equivalent problem
\begin{equation}\label{spcaM}
  \begin{aligned}
     \min_{X\in\mathbb{R}^{n\times r}}   -\mathrm{tr}(X^TB^TBX) + \mu \|X\|_1,     ~~  \mbox{s.t. }~~  X^TX = I_r.
  \end{aligned}
\end{equation}
Here, the constraint consists of the Stiefel manifold $\texttt{St}(n,r):=\{X\in\mathbb{R}^{n\times r}~:~X^\top X = I_r\}$. The tangent space of $\texttt{St}(n,r)$ is defined by $T_{X}\texttt{St}(n,r) = \{\eta\in \mathbb{R}^{n\times r}~:~X^\top \eta + \eta^\top X = 0\}$. Given any $U\in\mathbb{R}^{n\times r}$, the projection of $U$ onto $T_{X}\texttt{St}(n,r)$ is $\mathcal{P}_{T_{X}\texttt{St}(n,r)}(U) = U - X \frac{U^\top X + X^\top U}{2}$ \cite{AbsMahSep2008}. In our experiment, the data matrix $B\in\mathbb{R}^{m\times n}$ is produced by MATLAB function $\texttt{randn}(m, n)$, in which all entries of $B$ follow the standard Gaussian distribution. We shift the columns of $B$ such that they have zero mean, and finally the column vectors are normalized. We use the polar decomposition as the retraction mapping.%is generated via the following two settings:

We first compare the performance of the proposed four algorithms on the SPCA problem. We set $m=5000, n=200,$ and $\mu=0.4$. For the R-stochastic and R-stochastic-epoch algorithms, we partition the 50,000 samples into 100 subsets, and in each iteration, we sample one subset. The tolerance is set $\mathrm{tol} = 10^{-8}*n*r$.  Figure \ref{fig:perf_spca} presents the results of the four algorithms for fixed $m=5000,n=200,\mu=0.4$ and varying $r=5$ and $r=10$. The horizontal axis represents CPU time, while the vertical axis represents the objective function value gap: $F(x^k) - F_M$, where $F_M$ is given by the ManPG. The results indicate that the random version of the smoothing algorithm outperforms the deterministic version. Moreover, among the deterministic versions, Algorithm \ref{alg:full-epoch} (i.e., R-full-epoch) performs better than Algorithm \ref{alg:full} (i.e., R-full). In conclusion, the usages of epochs and stochastic estimations of the gradients lead to faster convergence.

We test the performance of all algorithms for solving the SPCA problems with different $n$, $r$ and sparsity parameter $\mu$, where $n \in\{ 200,300,500\}; r \in\{ 10, 20,30,40,50\}; \mu \in\{0.4,0.6,0.8\} $. As shown in Figure \ref{fig:perf_spca}, the R-stochastic demonstrates superior performance compared to other algorithms. Therefore, in this study, we only compare this algorithm with others, including ManPG, ManPG-A, and R-sub. Table \ref{tab:cms_ss} reports the computational results of our algorithm compared with other algorithms. From the table, it can be observed that R-sub does not converge before reaching the maximum iterations. The R-stochastic algorithm, ManPG, and ManPG-A achieve the same objective function value, and R-stochastic exhibits shorter computation time compared to other algorithms. This may be attributed to the fact that R-stochastic is a stochastic algorithm, while ManPG and ManPG-A are deterministic algorithms.

  \begin{figure}[!htb]
\centering
\includegraphics[width=0.45\textwidth]{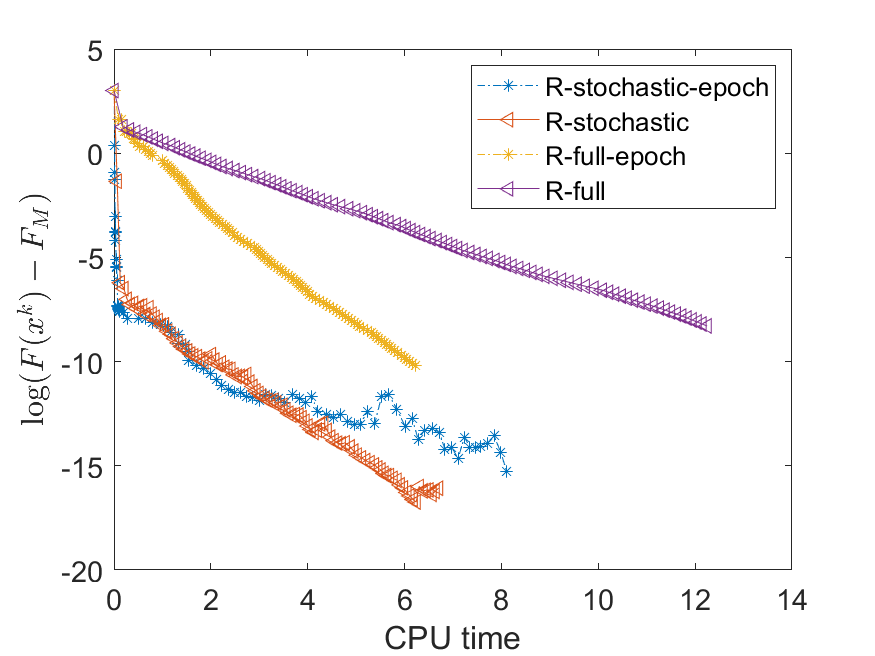}
\includegraphics[width=0.45\textwidth]{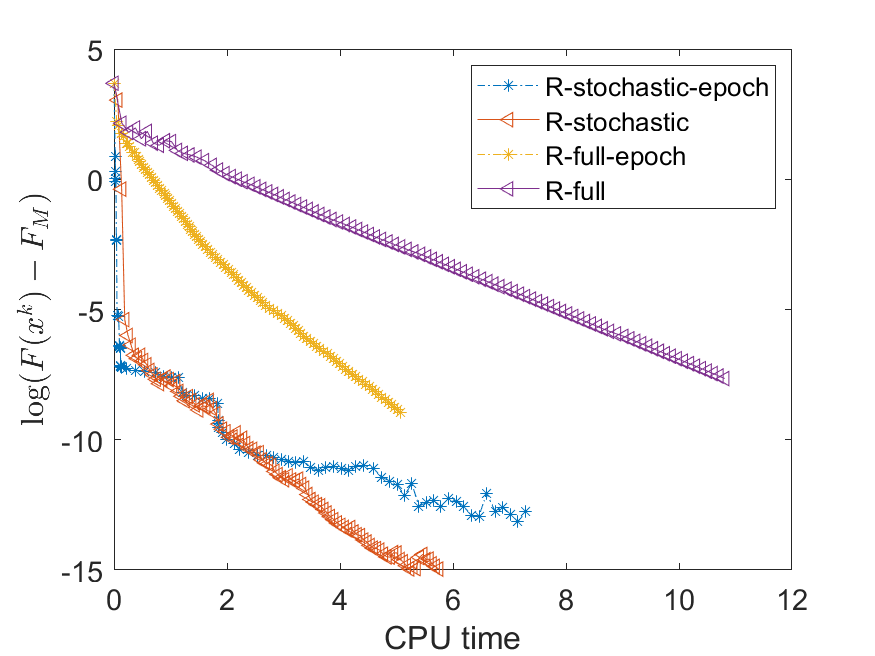}
\caption{The performance profiles of our four algorithms on SPCA with $m=5000,n=200$ and $\mu = 0.4$, left: $r=5$; right: $r=10$.}\label{fig:perf_spca}
\end{figure}

% \begin{itemize}
%   \item[(1)] Data matrix $B\in\mathbb{R}^{m\times n}$ is produced by MATLAB function $randn(m, n)$, which all entries of $B$ follow the standard Gaussian distribution. We  shift the columns of $B$ such that they have $0$-mean, and finally the column-vectors are normalized.
%   \item[(2)] 
%   Data matrix $B\in\mathbb{R}^{m\times n}$ is selected from real data. In those real data, ``Arabidopsis'',``Leukemia'' are the gene expression data selected from \cite{li2010inexact}, ``Staunton'',``Ross'' are the NCI 60 data selected from \cite{culhane2003cross}, and ``realEQTL'' is the yeast eQTL data selected from \cite{zhu2008integrating}.   
% \end{itemize}

\begin{footnotesize}
\setlength{\tabcolsep}{0.6pt}
%\footnotesize
\centering
\begin{longtable}{|c|cccc|cccc|}
\caption{The results of our algorithm compared with other algorithms on the sparse PCA problem with random data. The row labeled ``Percentage'' indicates the ratio of the algorithm is dominant in CPU time among all compared algorithms. The values of bold mean the optimal performance among all algorithms.}\label{tab:cms_ss}\\ 
\hline
\multirow{2}{1cm}{$(n,r,\mu)$} & \multicolumn{4}{|c|}{Objective function value} & \multicolumn{4}{|c|}{CPU time} \\ %\cline{2-9}
        &  R-stochastic &  ManPG    & ManPG-A & R-sub  &  R-stochastic &  ManPG    & ManPG-A & R-sub \\ \hline
\endfirsthead
\hline
\multirow{2}{*}{$(n,r,\mu)$} & \multicolumn{4}{c|}{Objective function value} & \multicolumn{4}{c|}{CPU time} \\ %\cline{2-9}
        &  R-stochastic &  ManPG    & ManPG-A & R-sub  &  R-stochastic &  ManPG    & ManPG-A & R-sub \\ \hline
\endhead
\hline
\endfoot

  (200,10,0.04) & -1.988e+1 & -1.988e+1 & -1.988e+1 & -1.018e+1 &  0.21 & 0.36 & \textbf{0.18} & 2.65\\ \hline
 (200,10,0.06) & -1.823e+1 & -1.823e+1 & -1.823e+1 & -8.224e+0 &  0.35 & 0.58 & \textbf{0.28} & 2.66\\ \hline
 (200,10,0.08) & -1.670e+1 & -1.669e+1 & -1.669e+1 & -6.512e+0 &  0.41 & 0.48 & \textbf{0.25} & 2.67\\ \hline
 (200,20,0.04) & -3.722e+1 & -3.722e+1 & -3.722e+1 & -2.004e+1 &  4.69 & 3.84 & \textbf{2.04} & 5.88\\ \hline
 (200,20,0.06) & -3.417e+1 & -3.417e+1 & -3.415e+1 & -1.579e+1 &  \textbf{2.50} & 5.75 & 2.76 & 5.60\\ \hline
 (200,20,0.08) & -3.051e+1 & -3.051e+1 & -3.051e+1 & -1.095e+1 &  \textbf{1.19} & 2.83 & 1.28 & 6.01\\ \hline
 (200,30,0.04) & -5.284e+1 & -5.284e+1 & -5.285e+1 & -2.903e+1 &  \textbf{1.37} & 6.38 & 3.41 & 9.62\\ \hline
 (200,30,0.06) & -4.806e+1 & -4.822e+1 & -4.814e+1 & -2.291e+1 &  7.00 & 12.31 & \textbf{4.56} & 10.03\\ \hline
 (200,30,0.08) & -4.383e+1 & -4.383e+1 & -4.400e+1 & -1.662e+1 &  \textbf{0.83} & 6.27 & 4.74 & 10.37\\ \hline
 (200,40,0.04) & -6.618e+1 & -6.618e+1 & -6.613e+1 & -3.715e+1 &  10.04 & 14.94 & \textbf{4.93} & 14.92\\ \hline
 (200,40,0.06) & -6.071e+1 & -6.078e+1 & -6.078e+1 & -2.969e+1 &  10.73 & 14.02 & \textbf{6.53} & 14.35\\ \hline
 (200,40,0.08) & -5.528e+1 & -5.527e+1 & -5.528e+1 & -2.093e+1 &  \textbf{3.89} & 16.18 & 7.18 & 14.84\\ \hline
 (200,50,0.04) & -7.904e+1 & -7.908e+1 & -7.905e+1 & -4.450e+1 &  13.75 & 31.19 & \textbf{10.10} & 22.85\\ \hline
 (200,50,0.06) & -7.193e+1 & -7.192e+1 & -7.189e+1 & -3.480e+1 &  \textbf{2.59} & 27.55 & 7.27 & 23.98\\ \hline
 (200,50,0.08) & -6.542e+1 & -6.543e+1 & -6.553e+1 & -2.504e+1 &  16.40 & 22.24 & \textbf{13.43} & 23.45\\ \hline
 (300,10,0.04) & -2.351e+1 & -2.351e+1 & -2.351e+1 & -1.160e+1 &  \textbf{0.12} & 0.43 & 0.23 & 4.25\\ \hline
 (300,10,0.06) & -2.152e+1 & -2.152e+1 & -2.152e+1 & -9.712e+0 &  2.99 & 0.76 & \textbf{0.36} & 4.12\\ \hline
 (300,10,0.08) & -1.948e+1 & -1.951e+1 & -1.951e+1 & -7.184e+0 &  3.84 & 1.05 & \textbf{0.49} & 4.18\\ \hline
 (300,20,0.04) & -4.422e+1 & -4.428e+1 & -4.428e+1 & -2.337e+1 &  4.66 & 2.45 & \textbf{1.46} & 7.17\\ \hline
 (300,20,0.06) & -4.038e+1 & -4.036e+1 & -4.036e+1 & -1.803e+1 &  \textbf{0.95} & 3.43 & 1.75 & 7.19\\ \hline
 (300,20,0.08) & -3.634e+1 & -3.634e+1 & -3.640e+1 & -1.412e+1 &  \textbf{1.67} & 3.87 & 2.80 & 7.33\\ \hline
 (300,30,0.04) & -6.340e+1 & -6.340e+1 & -6.336e+1 & -3.347e+1 &  \textbf{3.77} & 12.17 & 3.90 & 14.65\\ \hline
 (300,30,0.06) & -5.702e+1 & -5.702e+1 & -5.709e+1 & -2.685e+1 &  10.32 & 10.34 & \textbf{5.87} & 14.33\\ \hline
 (300,30,0.08) & -5.171e+1 & -5.171e+1 & -5.171e+1 & -1.878e+1 &  \textbf{1.54} & 9.67 & 4.48 & 14.66\\ \hline
 (300,40,0.04) & -8.068e+1 & -8.068e+1 & -8.074e+1 & -4.385e+1 &  \textbf{1.19} & 12.95 & 10.10 & 19.33\\ \hline
 (300,40,0.06) & -7.266e+1 & -7.266e+1 & -7.269e+1 & -3.235e+1 &  \textbf{8.45} & 18.43 & 8.51 & 19.87\\ \hline
 (300,40,0.08) & -6.595e+1 & -6.594e+1 & -6.613e+1 & -2.479e+1 &  \textbf{3.25} & 13.16 & 11.99 & 19.56\\ \hline
 (300,50,0.04) & -9.606e+1 & -9.605e+1 & -9.608e+1 & -5.353e+1 &  \textbf{2.41} & 25.50 & 16.59 & 26.65\\ \hline
 (300,50,0.06) & -8.703e+1 & -8.724e+1 & -8.713e+1 & -4.047e+1 &  \textbf{16.81} & 52.02 & 20.25 & 26.74\\ \hline
 (300,50,0.08) & -7.826e+1 & -7.826e+1 & -7.804e+1 & -2.851e+1 &  \textbf{5.34} & 39.97 & 11.42 & 31.45\\ \hline
 (500,10,0.04) & -3.056e+1 & -3.056e+1 & -3.056e+1 & -1.692e+1 &  \textbf{0.62} & 1.55 & 0.85 & 10.42\\ \hline
 (500,10,0.06) & -2.757e+1 & -2.757e+1 & -2.757e+1 & -1.362e+1 &  1.43 & 1.40 & \textbf{0.69} & 10.44\\ \hline
 (500,10,0.08) & -2.473e+1 & -2.476e+1 & -2.476e+1 & -1.070e+1 &  8.15 & 1.16 & \textbf{0.67} & 10.79\\ \hline
 (500,20,0.04) & -5.820e+1 & -5.820e+1 & -5.817e+1 & -3.254e+1 &  \textbf{2.44} & 5.40 & 2.95 & 17.92\\ \hline
 (500,20,0.06) & -5.196e+1 & -5.196e+1 & -5.196e+1 & -2.543e+1 &  \textbf{0.68} & 2.90 & 1.68 & 18.26\\ \hline
 (500,20,0.08) & -4.649e+1 & -4.649e+1 & -4.652e+1 & -1.895e+1 &  \textbf{2.97} & 5.74 & 4.32 & 19.35\\ \hline
 (500,30,0.04) & -8.199e+1 & -8.199e+1 & -8.189e+1 & -4.672e+1 &  \textbf{4.53} & 20.92 & 6.13 & 31.88\\ \hline
 (500,30,0.06) & -7.443e+1 & -7.446e+1 & -7.450e+1 & -3.727e+1 &  16.30 & 9.26 & \textbf{6.35} & 29.98\\ \hline
 (500,30,0.08) & -6.724e+1 & -6.723e+1 & -6.729e+1 & -2.682e+1 &  \textbf{2.70} & 15.76 & 8.49 & 30.34\\ \hline
 (500,40,0.04) & -1.060e+2 & -1.060e+2 & -1.060e+2 & -5.875e+1 &  \textbf{7.79} & 30.92 & 20.98 & 38.18\\ \hline
 (500,40,0.06) & -9.557e+1 & -9.557e+1 & -9.554e+1 & -4.867e+1 &  \textbf{3.90} & 45.70 & 13.98 & 38.03\\ \hline
 (500,40,0.08) & -8.492e+1 & -8.492e+1 & -8.499e+1 & -3.362e+1 &  \textbf{3.91} & 36.98 & 21.05 & 37.90\\ \hline
 (500,50,0.04) & -1.271e+2 & -1.271e+2 & -1.272e+2 & -7.299e+1 &  \textbf{8.67} & 29.91 & 20.22 & 52.52\\ \hline
 (500,50,0.06) & -1.140e+2 & -1.140e+2 & -1.142e+2 & -5.714e+1 &  \textbf{4.21} & 39.19 & 29.72 & 52.59\\ \hline
 (500,50,0.08) & -1.025e+2 & -1.025e+2 & -1.024e+2 & -4.153e+1 &  \textbf{12.17} & 57.37 & 26.65 & 53.36\\ \hline
Percentage & & & & & \textbf{64.44\%} & 0 & 35.56\% & 0 \\ \hline

\end{longtable}
\end{footnotesize}

%   Let $\Lambda = \mbox{Diag}([\lambda_1,\cdots,\lambda_r])$ where $\lambda_1 = \cdots=\lambda_r = 0.9$, and  construct a sparse matrix $X\in\mathbb{R}^{n\times r}$ where the rows of $X$ are set to $S = \{1:\lfloor n/5\rfloor\}$, and the corresponding value at the nonzero coordinates are generated by normalizing random numbers drawn from the uniform distribution on finite set $\{-2, -1, 0, 1, 2\}$.  Then the covariance matrix $\Sigma_X$ is given by
%       \begin{equation}\nn
%         \Sigma_X = X\Lambda X^T + 0.01 \cdot EE^T,
%       \end{equation}
%       where $E$ denote the noise matrix. Finally,  each $b_i$ in data matrix $B$ is generated via
%       \begin{equation}\nn
%         b_i \sim \mathcal{N}(0,\Sigma_X), i = 1, \cdots, m.
%       \end{equation}

\subsection{Compressed modes in Physics}

 In physics, the compressed modes (CMs) problem   seeks spatially localized solutions of the independent-particle Schr\"{o}dinger equation:
\begin{equation}
  \hat{H}\phi(x) = \lambda \phi(x), ~~x\in\Omega,
\end{equation}
where $\hat{H} = -\frac{1}{2}\Delta$  and $\Delta$ is  a Laplacian operator. %Letting $H$ be a symmetric matrix formed by the discretization of $\hat{H}$.
Consider the $1$-D free-electron (FE) model  with $\hat{H} = -\frac{1}{2}\partial_{x^2}$ in our experiments.  By proper discretization, the CMs can be reformulated to
\begin{equation}\label{CMs}
  \begin{aligned}
     \min_{X\in\mathbb{R}^{n\times r}}  \mathrm{tr}(X^TH X) + \mu \|X\|_1, ~~ \mbox{s.t. } ~~  X^TX = I_r,
  \end{aligned}
\end{equation}
where $H$ is the discretized Schr\"{o}dinger operator, $\mu$ is a regularization parameter. The interesting readers are referred to \cite{ozolicnvs2013compressed} for more details. In our experiments, we use the polar decomposition as the retraction mapping. The domain $\Omega:= [0, 50]$ is discretized with $n$ equally spaced nodes. Since CMs problem \eqref{CMs} is not of finite-sum form, we only compare the results of the R-full-epoch algorithm (abbreviated as R-F-epo) with other algorithms. Table \ref{tab:cms_s1} presents the objective function value and CPU time of the algorithms on CMs problem with different values of $(n, r, \mu)$. Our algorithm outperforms other algorithms in most cases.

\begin{footnotesize}
\setlength{\tabcolsep}{2pt}
%\footnotesize
\centering
\begin{longtable}{|c|cccc|cccc|}
\caption{The results of our algorithm compared with other algorithms on CMs with random data. The row labeled ``Percentage'' indicates the ratio of the algorithm is dominant in CPU time among all compared algorithms.}\label{tab:cms_s1}\\
\hline
\multirow{2}{*}{$(n,r,\mu)$} & \multicolumn{4}{c|}{Objective function value} & \multicolumn{4}{c|}{CPU time} \\ 
        &  R-F-epo &  ManPG    & ManPG-A & R-sub  &  R-F-epo &  ManPG    & ManPG-A & R-sub \\ \hline
\endfirsthead
\hline
\multirow{2}{*}{$(n,r,\mu)$} & \multicolumn{4}{c|}{Objective function value} & \multicolumn{4}{c|}{CPU time} \\ 
        &  R-F-epo &  ManPG    & ManPG-A & R-sub  &  R-F-epo &  ManPG    & ManPG-A & R-sub \\ \hline
\endhead
\hline
\endfoot

(128,5,0.1) & 1.355e+0 & 1.355e+0 & 1.355e+0 & 1.360e+0 &  \textbf{0.25} & 0.55 & 0.48 & 2.15\\ \hline
      (128,5,0.1) & 2.356e+0 & 2.356e+0 & 2.356e+0 & 2.361e+0 &  1.06 & 0.21 & \textbf{0.14} & 2.38\\ \hline
      (128,5,0.2) & 4.097e+0 & 4.097e+0 & 4.097e+0 & 4.118e+0 &  0.58 & \textbf{0.01} & \textbf{0.01} & 2.32\\ \hline
      (128,5,0.3) & 5.661e+0 & 5.661e+0 & 5.661e+0 & 5.705e+0 &  0.64 & 0.08 & \textbf{0.05} & 2.41\\ \hline
      (128,10,0.1) & 2.937e+0 & 2.937e+0 & 2.937e+0 & 2.940e+0 &  1.09 & 1.41 & \textbf{0.95} & 4.07\\ \hline
      (128,10,0.1) & 4.815e+0 & 4.815e+0 & 4.815e+0 & 4.824e+0 &  1.50 & \textbf{1.44} & 2.20 & 6.12\\ \hline
      (128,10,0.2) & 8.206e+0 & 8.206e+0 & 8.206e+0 & 8.250e+0 &  \textbf{0.97} & 3.85 & 3.04 & 6.03\\ \hline
      (128,10,0.3) & 1.133e+1 & 1.133e+1 & 1.133e+1 & 1.142e+1 &  3.53 & \textbf{1.03} & 1.13 & 4.20\\ \hline
      (128,15,0.1) & 5.374e+0 & 5.375e+0 & 5.375e+0 & 5.376e+0 &  \textbf{0.18} & 2.85 & 3.13 & 6.50\\ \hline
      (128,15,0.1) & 8.012e+0 & 8.012e+0 & 8.012e+0 & 8.028e+0 &  \textbf{0.88} & 6.24 & 6.47 & 7.79\\ \hline
      (128,15,0.2) & 1.282e+1 & 1.282e+1 & 1.282e+1 & 1.287e+1 &  \textbf{0.49} & 3.12 & 3.11 & 6.72\\ \hline
      (128,15,0.3) & 1.726e+1 & 1.726e+1 & 1.726e+1 & 1.739e+1 &  \textbf{0.76} & 1.66 & 1.67 & 6.69\\ \hline
      (128,20,0.1) & 9.184e+0 & 9.184e+0 & 9.184e+0 & 9.184e+0 &  \textbf{0.74} & 17.58 & 24.86 & 16.65\\ \hline
      (128,20,0.1) & 1.253e+1 & 1.253e+1 & 1.253e+1 & 1.254e+1 &  \textbf{4.32} & 11.42 & 9.74 & 12.64\\ \hline
      (128,20,0.2) & 1.861e+1 & 1.861e+1 & 1.861e+1 & 1.869e+1 &  5.10 & 4.25 & \textbf{3.75} & 12.19\\ \hline
      (128,20,0.3) & 2.429e+1 & 2.429e+1 & 2.429e+1 & 2.447e+1 &  \textbf{4.77} & 11.97 & 10.86 & 12.72\\ \hline
      (128,30,0.1) & 2.270e+1 & 2.270e+1 & 2.269e+1 & 2.270e+1 &  \textbf{0.58} & 78.95 & 115.97 & 19.96\\ \hline
      (128,30,0.1) & 2.737e+1 & 2.737e+1 & 2.737e+1 & 2.738e+1 &  \textbf{0.65} & 59.77 & 81.07 & 29.15\\ \hline
      (128,30,0.2) & 3.575e+1 & 3.575e+1 & 3.575e+1 & 3.584e+1 &  \textbf{1.29} & 6.53 & 6.82 & 21.06\\ \hline
      (128,30,0.3) & 4.375e+1 & 4.375e+1 & 4.375e+1 & 4.395e+1 &  \textbf{0.58} & 3.69 & 3.33 & 20.62\\ \hline
      (256,5,0.1) & 1.788e+0 & 1.788e+0 & 1.788e+0 & 1.790e+0 &  \textbf{0.13} & 0.29 & \textbf{0.13} & 2.69\\ \hline
      (256,5,0.1) & 3.113e+0 & 3.113e+0 & 3.113e+0 & 3.124e+0 &  \textbf{0.38} & 1.13 & 0.88 & 2.66\\ \hline
      (256,5,0.2) & 5.416e+0 & 5.416e+0 & 5.416e+0 & 5.461e+0 &  0.59 & 0.24 & \textbf{0.23} & 2.70\\ \hline
      (256,5,0.3) & 7.489e+0 & 7.489e+0 & 7.489e+0 & 7.592e+0 &  0.31 & 0.03 & \textbf{0.02} & 3.35\\ \hline
      (256,10,0.1) & 3.747e+0 & 3.747e+0 & 3.747e+0 & 3.752e+0 &  \textbf{0.38} & 5.07 & 3.15 & 5.44\\ \hline
      (256,10,0.1) & 6.273e+0 & 6.273e+0 & 6.273e+0 & 6.291e+0 &  \textbf{0.86} & 2.86 & 3.53 & 6.35\\ \hline
      (256,10,0.2) & 1.084e+1 & 1.084e+1 & 1.084e+1 & 1.095e+1 &  \textbf{2.69} & 22.63 & 18.98 & 5.85\\ \hline
      (256,10,0.3) & 1.498e+1 & 1.498e+1 & 1.498e+1 & 1.519e+1 &  \textbf{0.59} & 1.37 & 1.68 & 5.64\\ \hline
      (256,15,0.1) & 6.522e+0 & 6.522e+0 & 6.522e+0 & 6.528e+0 &  \textbf{0.78} & 7.75 & 5.32 & 8.67\\ \hline
      (256,15,0.1) & 1.010e+1 & 1.010e+1 & 1.010e+1 & 1.013e+1 &  \textbf{1.05} & 4.17 & 3.62 & 9.42\\ \hline
      (256,15,0.2) & 1.659e+1 & 1.659e+1 & 1.659e+1 & 1.672e+1 &  \textbf{2.24} & 9.36 & 18.29 & 8.91\\ \hline
      (256,15,0.3) & 2.259e+1 & 2.259e+1 & 2.259e+1 & 2.294e+1 &  \textbf{2.72} & 14.02 & 25.48 & 8.42\\ \hline
      (256,20,0.1) & 1.068e+1 & 1.068e+1 & 1.068e+1 & 1.069e+1 &  \textbf{0.71} & 24.35 & 24.07 & 13.79\\ \hline
      (256,20,0.1) & 1.522e+1 & 1.522e+1 & 1.522e+1 & 1.526e+1 &  \textbf{1.46} & 12.17 & 14.15 & 15.48\\ \hline
      (256,20,0.2) & 2.350e+1 & 2.350e+1 & 2.350e+1 & 2.368e+1 &  \textbf{4.72} & 22.24 & 18.79 & 15.84\\ \hline
      (256,20,0.3) & 3.121e+1 & 3.121e+1 & 3.121e+1 & 3.160e+1 &  \textbf{0.91} & 14.40 & 15.64 & 13.94\\ \hline
      (256,30,0.1) & 2.509e+1 & 2.509e+1 & 2.509e+1 & 2.510e+1 &  \textbf{0.82} & 59.27 & 66.48 & 24.45\\ \hline
      (256,30,0.1) & 3.149e+1 & 3.149e+1 & 3.149e+1 & 3.152e+1 &  \textbf{0.41} & 35.48 & 41.17 & 24.90\\ \hline
      (256,30,0.2) & 4.308e+1 & 4.308e+1 & 4.308e+1 & 4.330e+1 &  \textbf{0.59} & 18.60 & 19.30 & 27.87\\ \hline
      (256,30,0.3) & 5.391e+1 & 5.391e+1 & 5.391e+1 & 5.447e+1 &  \textbf{0.62} & 31.60 & 26.37 & 24.85\\ \hline
      % (512,5,0.1) & 2.360e+0 & 2.360e+0 & 2.360e+0 & 2.369e+0 &  \textbf{0.48} & 2.54 & 1.02 & 3.87\\ \hline
      % (512,5,0.1) & 4.111e+0 & 4.111e+0 & 4.110e+0 & 4.148e+0 &  \textbf{0.75} & 5.19 & 4.20 & 3.76\\ \hline
      % (512,5,0.2) & 7.150e+0 & 7.150e+0 & 7.150e+0 & 7.308e+0 &  0.91 & 0.16 & \textbf{0.11} & 3.84\\ \hline
      % (512,5,0.3) & 9.889e+0 & 9.889e+0 & 9.889e+0 & 1.024e+1 &  \textbf{0.93} & 1.30 & 0.96 & 3.73\\ \hline
      % (512,10,0.1) & 4.829e+0 & 4.828e+0 & 4.827e+0 & 4.857e+0 &  \textbf{2.72} & 13.38 & 8.05 & 8.33\\ \hline
      % (512,10,0.1) & 8.244e+0 & 8.244e+0 & 8.230e+0 & 8.323e+0 &  \textbf{3.07} & 12.93 & 18.91 & 7.66\\ \hline
      % (512,10,0.2) & 1.431e+1 & 1.431e+1 & 1.430e+1 & 1.461e+1 &  \textbf{1.84} & 16.08 & 14.91 & 7.95\\ \hline
      % (512,10,0.3) & 1.978e+1 & 1.978e+1 & 1.978e+1 & 2.049e+1 &  \textbf{2.70} & 9.76 & 6.34 & 8.04\\ \hline
      % (512,15,0.1) & 8.052e+0 & 8.053e+0 & 8.053e+0 & 8.074e+0 &  \textbf{2.69} & 456.86 & 13.08 & 16.04\\ \hline
Percentage & & & & & \textbf{77.5\%} & 7.5\% &17.50\% & 0 \\ \hline

\end{longtable}
\end{footnotesize}

\section{Conclusions}

In this paper, we present a novel family of Riemannian gradient-based methods, namely Riemannian smoothing gradient and Riemannian smoothing stochastic gradient, designed to identify generalized $\epsilon$-stationarity points for a class of nonconvex and nonsmooth problems on compact Riemannian submanifold embedded in Euclidean space.
We introduce the concept of generalized $\epsilon$-stationarity, which extends the notion of $\epsilon$-stationarity. By leveraging the Moreau envelope with a sequence of decreasing smoothing parameters, we transform the problem into a smoothed formulation in Euclidean space. We establish the smoothness of this problem on the Riemannian compact submanifold.
We demonstrate that our proposed algorithms exhibit iteration complexities of $\mathcal{O}(\epsilon^{-3})$ and $\mathcal{O}(\epsilon^{-5})$ to achieve a generalized $\epsilon$-stationarity, respectively. Notably, our algorithms outperform other existing methods in numerical experiments conducted on both the SPCA and CMs problems.

\section*{Declarations}
%\subsection*{Funding}
This research was supported by  the Natural Science Foundation of China with grant 12071398,   the Natural Science Foundation of Hunan Province with grant 2020JJ4567,  and the Key Scientific Research Found of Hunan Education Department with grants 20A097 and 18A351.
%\subsection*{Competing Interests}
The authors have no relevant financial or non-financial interests to disclose.

% \begin{itemize}
% \item Funding
% \item Conflict of interest/Competing interests (check journal-specific guidelines for which heading to use)
% \item Ethics approval 
% \item Consent to participate
% \item Consent for publication
% \item Availability of data and materials
% \item Code availability 
% \item Authors' contributions
% \end{itemize}

% \noindent
% If any of the sections are not relevant to your manuscript, please include the heading and write `Not applicable' for that section. 

%%===================================================%%
%% For presentation purpose, we have included        %%
%% \bigskip command. please ignore this.             %%
%%===================================================%%
% \bigskip
% \begin{flushleft}%
% Editorial Policies for:

% \bigskip\noindent
% Springer journals and proceedings: \url{https://www.springer.com/gp/editorial-policies}

% \bigskip\noindent
% Nature Portfolio journals: \url{https://www.nature.com/nature-research/editorial-policies}

% \bigskip\noindent
% \textit{Scientific Reports}: \url{https://www.nature.com/srep/journal-policies/editorial-policies}

% \bigskip\noindent
% BMC journals: \url{https://www.biomedcentral.com/getpublished/editorial-policies}
% \end{flushleft}

%%===========================================================================================%%
%% If you are submitting to one of the Nature Portfolio journals, using the eJP submission   %%
%% system, please include the references within the manuscript file itself. You may do this  %%
%% by copying the reference list from your .bbl file, paste it into the main manuscript .tex %%
%% file, and delete the associated \verb+\bibliography+ commands.                            %%
%%===========================================================================================%%

\bibliography{sn-bibliography}% common bib file
%% if required, the content of .bbl file can be included here once bbl is generated
%%\input sn-article.bbl

%% Default %%
%%\input sn-sample-bib.tex%

\end{document}